\theoremstyle{plain}
\newtheorem{theorem}{Theorem}[section]
\newtheorem{lemma}[theorem]{Lemma}
\newtheorem{corollary}[theorem]{Corollary}
\newtheorem{proposition}[theorem]{Proposition}
\newtheorem*{problem}{Problem}
\theoremstyle{definition}
\newtheorem{remark}[theorem]{Remark}
\newtheorem{example}[theorem]{Example}
\newtheorem{definition}[theorem]{Definition}
\theoremstyle{remark}
\def\Aut{\text{Aut\,}}
\def\Aff{\text{Aff\,}}
\def\Sec{\text{Sec\,}}
\def\Tan{\text{Tan\,}}
\def\dim{\text{dim\,}}
\def\D{\mathcal{D}}
\def\Def{\text{ \rm Def\,}}
\def\C{\mathcal{C}}
\def\Def{\text{Def\,}}
\def\Exc{\text{Exc\,}}
\def\Ext{\text{Ext\,}}
\def\Exal{\text{Exal\,}}
\def\Hom{\mathscr{H}\!om}
\def\Hilb{\mathscr{H}\!ilb}
\def\hom{\text{\rm Hom}}
\def\Isom{\mathcal{I}\!som}
\def\Image{\text{\rm Image}}
\def\pn{\{p_i\}_{i=1}^{n}}
\def\spn{\{p_i^s\}_{i=1}^{n}}
\def\L{\mathscr{L}}
\def\M{\mathcal{M}}
\def\N{\text{N}}
\def\SM{\overline{\mathcal{M}}}
\def\SMzero{\overline{M}_{0,n}}
\def\Nef{\overline{\N}^{1}_{+}(\C/\SM_{g,n})}
\def\EffCurves{\overline{\N}_{1}^{+}(\C/\SM_{g,n})}
\def\O{\mathscr{O}}
\def\P{\mathbb{P}}
\def\Q{\mathbb{Q}}
\def\X{\mathcal{X}}
\def\A{\mathcal{A}}
\def\SV{\mathcal{V}}
\def\U{\mathcal{U}}
\def\V{\mathcal{V}}
\def\Z{\mathcal{Z}}
\def\Aff{\text{Aff}}
\def\sigman{\{\sigma_{i}\}_{i=1}^{n}}
\def\sigmans{\{\sigma^s_{i}\}_{i=1}^{n}}
\def\sigmanprime{\{\sigma'_{i}\}_{i=1}^{n}}
\def\taun{\{\tau_{i}\}_{i=1}^{n}}
\def\Spec{\text{\rm Spec\,}}
\def\Pic{\text{\rm Pic\,}}
\begin{document}
\title{Towards a Classification of Modular Compactificatons of $\mathcal{M}_{g,n}$}
\author{David Ishii Smyth}

\maketitle
\begin{abstract} The moduli space of smooth curves admits a beautiful compactification $\M_{g,n} \subset \SM_{g,n}$ by the moduli space of stable curves. In this paper, we undertake a systematic classification of alternate modular compactifications of $\M_{g,n}$. Let $\U_{g,n}$ be the (non-separated) moduli stack of all $n$-pointed reduced, connected, complete, one-dimensional schemes of arithmetic genus $g$. When $g=0$, $\U_{0,n}$ is irreducible and we classify all open proper substacks of $\U_{0,n}$. When $g \geq 1$, $\U_{g,n}$ may not be irreducible, but there is a unique irreducible component $\V_{g,n} \subset \U_{g,n}$ containing $\M_{g,n}$. We classify open proper substacks of $\V_{g,n}$ satisfying a certain stability condition.
\end{abstract}
\tableofcontents

\pagebreak 
\section{Introduction}
\emph{Notation: An $n$-pointed curve $(C, \pn)$ is a reduced, connected, complete, one-dimensional scheme of finite-type over an algebraically closed field, together with a collection of $n$ points $p_1, \ldots, p_n \in C$. The marked points need not be smooth nor distinct. We say that a point on $C$ is \textit{distinguished} if it is marked or singular, and that a point on the normalization $\tilde{C}$ is \textit{distinguished} if it lies above a distinguished point of $C$. If $C$ is any curve and $Z \subset C$ is a proper subcurve, we call $Z^{c}:=\overline{C \backslash Z}$ \textit{the complement of $Z$}.}\\

\subsection{Statement of main result}\label{S:MainResult}

One of the most beautiful and influential theorems in modern algebraic geometry is the construction of a modular compactification $\M_{g,n} \subset \SM_{g,n}$ for the moduli space of smooth curves \cite{DeligneMumford}. The key point in this construction is the identification of a suitable class of singular curves, namely \emph{Deligne-Mumford stable curves}, with the property that every incomplete one-parameter family of smooth curves has a unique limit contained in this class. While the class of stable curves gives a natural modular compactification of the space of smooth curves, it is not unique in this respect. There exist two alternate compactifications in the literature, the moduli space of pseudostable curves \cite{Schubert}, in which cusps arise, and the moduli space of weighted pointed curves, in which sections with small weight are allowed to collide \cite{Hassett1}. In light of these constructions, it is natural to ask

\begin{problem}
Can we classify all possible stability conditions for curves, i.e. classes of singular marked curves which are deformation-open and satisfy the property that any one-parameter family of smooth curves contains a unique limit contained in that class.
\end{problem} 

Stable, pseudostable, and weighted stable curves all have the property that every rational component of the normalization has at least three distinguished points. In general, we say that an $n$-pointed curve with this property is \emph{prestable}. The main result of this paper classifies stability conditions on prestable curves, i.e. we give a simple combinatorial description of all deformation-open classes of prestable curves with the unique limit property.

Stability conditions on curves correspond to open proper substacks of the moduli stack of all curves. To make this precise, let $\U_{g,n}$ be the functor from schemes to groupoids defined by
\begin{equation*}
\U_{g,n}(T):=
\left\{
\begin{matrix}
\text{ Flat, proper, finitely-presented morphisms $\C \rightarrow T$, with $n$ sections}\\
\text{$\{ \sigma_i\}_{i=1}^{n}$, and connected, reduced, one-dimensional geometric fibers.}\\
\end{matrix}
\right\}
\end{equation*}
Note that we always allow the total space $\C$ of a family to be an algebraic space. In Appendix B, it is shown that $\U_{g,n}$ is an algebraic stack, locally of finite-type over $\Spec \mathbb{Z}$. Let $\M_{g,n} \subset \U_{g,n}$ denote the open substack corresponding to families of smooth curves. Since $\M_{g,n}$ is irreducible, there is a unique irreducible component $\V_{g,n} \subset \U_{g,n}$ containing $\M_{g,n}$. The points of $\V_{g,n}$ correspond to smoothable curves, while the generic point of every extraneous component of $\U_{g,n}$ parametrizes a non-smoothable curve. Since we are interested in irreducible compactifications of $\M_{g,n}$, we work exclusively in $\V_{g,n}$.

\begin{definition} A \emph{modular compactification of $\M_{g,n}$} is an open substack $\X \subset \V_{g,n}$, such that $\X$ is proper over $\Spec \mathbb{Z}.$ 
 \end{definition}
 
Since the definition of a modular compactification is topological, the set of modular compactifications of $\M_{g,n}$ does not depend on the stack structure of $\V_{g,n}$. In the absence of a functorial description for $\V_{g,n}$, we simply define the stack structure by taking the stack-theoretic image in $\U_{g,n}$ of the inclusion $\V_{g,n}^{0} \hookrightarrow \U_{g,n}$, where $\V_{g,n}^{0}$ is the interior of $\V_{g,n}$, i.e.
$
\V_{g,n}^{0}:=\V_{g,n} \backslash (\V_{g,n} \cap \overline{\U_{g,n} \backslash \V_{g,n}}).
$
 
The long-term goal of this project is to classify all modular compactifications of $\M_{g,n}$. This paper takes the first step by classifying all \emph{stable modular compactifications of $\M_{g,n}$}. If $(C,\{p_i\}_{i=1}^{n})$ is an $n$-pointed curve, we say that $(C,\{p_i\}_{i=1}^{n})$ is \emph{prestable} (resp. \emph{presemistable}) if every rational component of the normalization $\tilde{C}$ has at least three (resp. two) distinguished points. We then define the restricted class of stable (resp. semistable) modular compactifications as follows.

\begin{definition}
A modular compactification $\X \subset \V_{g,n}$ is \emph{stable} (resp. \emph{semistable}) if every geometric point $[C,\{p_i\}_{i=1}^{n}] \in \X$ is prestable (resp. presemistable).
\end{definition}

\begin{remark}
It is by no means obvious that there should exist strictly semistable modular compactifications of $\M_{g,n}$. After all, if a nodal curve $(C, \pn)$ contains a smooth rational subcurve with only two distinguished points, then $\Aut(C, \pn)$ is not proper; in particular, $(C,\pn)$ cannot be contained in any proper substack of $\U_{g,n}$. A similar argument (see Section \ref{SS:GenusZero}) shows that every modular compactification of $M_{0,n}$ is stable, so the methods of this paper give a classification of \emph{all} modular compactifications of $M_{0,n}$. By contrast, the author has constructed a sequence of strictly semistable modular compactifications of $\M_{1,n}$ \cite{Smyth1}. Thus, for $g \geq 1$, our classification of stable compactifications does not tell the whole story.
\end{remark}

We should observe that a stable modular compactification $\X$ necessarily has quasi-finite diagonal. In particular, $\X$ admits an irreducible coarse moduli space $X$, which gives a proper (though not necessarily projective) birational model of $M_{g,n}$ \cite{KeelMori}.
\begin{proposition} If $\X$ is a stable modular compactification, then $\X$ has quasi-finite diagonal.

\end{proposition}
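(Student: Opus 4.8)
Below is how I would approach the proof.

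The plan is to reduce the statement to the finiteness of the automorphism group schemes of the geometric points of $\X$, and then to prove that finiteness using prestability together with the normalization map. For the reduction, note that $\X$ is proper over $\Spec\mathbb{Z}$, hence separated, so the diagonal $\Delta_{\X}\colon\X\to\X\times_{\mathbb{Z}}\X$ is proper; and a proper morphism with finite fibres is finite, in particular quasi-finite. So it is enough to check that every fibre of $\Delta_{\X}$ is finite. A nonempty fibre of $\Delta_{\X}$ over a geometric point of $\X\times_{\mathbb{Z}}\X$ is the $\mathrm{Isom}$ scheme of two geometric points of $\X$; it is nonempty only when these points are isomorphic, in which case it is the automorphism group scheme $\underline{\mathrm{Aut}}(C,\{p_i\}_{i=1}^{n})$ of the corresponding geometric point $[C,\{p_i\}_{i=1}^{n}]$ of $\X$. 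Since every geometric point of $\X$ is prestable, the proposition reduces to the local-to-global assertion: the automorphism group scheme of a prestable $n$-pointed curve over an algebraically closed field is finite.

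To prove this, fix a prestable $(C,\{p_i\}_{i=1}^{n})$ over an algebraically closed (hence perfect) field $k$, set $G=\underline{\mathrm{Aut}}(C,\{p_i\}_{i=1}^{n})$, and let $H=(G_{\mathrm{red}})^{\circ}$ be the identity component of its reduction; then $H$ is a smooth connected algebraic group, and $G$ is finite precisely when $H$ is trivial. Suppose $H$ is nontrivial. It acts faithfully on $(C,\{p_i\}_{i=1}^{n})$, and by functoriality of the normalization $\nu\colon\tilde{C}\to C$ it acts on $\tilde{C}$; being connected, it preserves each irreducible component of $\tilde{C}$, and it fixes each distinguished point of $\tilde{C}$ — a point lying over a marked or singular point of $C$ — since these form a finite $H$-stable set. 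I would then examine the image of $H$ in $\underline{\mathrm{Aut}}(\tilde{C}_i)$ component by component: on a component of genus $\geq2$ this automorphism scheme is finite, so the smooth connected image is trivial; on a component of genus $1$ the image lands in the identity component $\underline{\mathrm{Aut}}^{\circ}(\tilde{C}_i)$, the group of translations, which acts freely, so fixing a distinguished point forces the image to be trivial (a distinguished point exists unless $C=\tilde{C}_i$ is smooth of genus $1$ with $n=0$, which is excluded); and on a rational component prestability provides at least three distinct distinguished points, and the scheme-theoretic stabilizer in $\mathrm{PGL}_2$ of three distinct points of $\P^1$ is trivial. Hence $H$ acts trivially on every component, so trivially on $\tilde{C}$, and then — because $\mathcal{O}_C\hookrightarrow\nu_*\mathcal{O}_{\tilde{C}}$ is $H$-equivariant — trivially on $C$, contradicting faithfulness. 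So $H$ is trivial, $G$ is finite, and we are done.

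There are three points that I expect to need care. First, the reduction is where properness of $\X$ is really used: it gives a proper (indeed finite-type, separated) diagonal, which is precisely why the conclusion takes the form "quasi-finite diagonal" and cannot be sharpened for free. Second, the characteristic must be watched: it is tempting to try to run the argument through the Lie algebra $H^0(C,\mathcal{T}_C(-\sum p_i))$ of $G$, but this can be nonzero in small characteristic — for instance for a cuspidal cubic with two marked points — even though the automorphism scheme is finite (it has an infinitesimal part like $\alpha_p$); this is why I would argue with the reduced identity component $H$ and its action on the components rather than with first-order automorphisms. Third, the facts quoted on the components of $\tilde{C}$ (finiteness of $\underline{\mathrm{Aut}}$ in genus $\geq2$; that $\underline{\mathrm{Aut}}^{\circ}$ is the translation group in genus $1$; that the triple-point stabilizer in $\mathrm{PGL}_2$ is scheme-theoretically trivial) are classical, but should be invoked with attention in positive characteristic; and one must keep the standing assumption $2g-2+n>0$ in mind to exclude the degenerate case $(g,n)=(1,0)$.
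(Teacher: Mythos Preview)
Your argument is correct and follows the same core strategy as the paper: lift automorphisms to the normalization $\tilde{C}$ and analyze them component by component, using that rational components have at least three distinguished points and that higher-genus components have finite automorphism groups (with at least one fixed point in genus one). The paper's proof, however, is considerably lighter. It simply observes that for quasi-finiteness of the diagonal one only needs $\Aut_k(C,\{p_i\})$ to have finitely many $k$-points, and then counts those $k$-points directly via the injection into the automorphisms of $(\tilde{C},\{\tilde{q}_i\})$. No appeal to properness of $\X$, no reduced identity component, no scheme-theoretic stabilizers.

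Two specific remarks. First, your invocation of properness in the reduction is unnecessary: the diagonal of an algebraic stack like $\X$ is already representable and of finite type, so quasi-finiteness amounts exactly to finiteness of geometric fibres, i.e.\ finiteness of $\Aut_k(C,\{p_i\})(k)$. Your route actually proves more --- that the diagonal is \emph{finite}, not merely quasi-finite --- so your first ``care point'' has the logic slightly inverted: properness is what would let you sharpen the conclusion, not what prevents sharpening. Second, your caution about characteristic is well-placed if one were tempted to argue via $H^0(C,\mathcal{T}_C(-\sum p_i))$, but the paper's $k$-point count sidesteps this entirely: the relevant finiteness statements for $k$-automorphisms of smooth curves with marked points hold in every characteristic without further work. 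So while your argument with $(G_{\mathrm{red}})^\circ$ is a valid and robust way to handle group schemes in positive characteristic, it is more machinery than the situation demands.
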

\begin{proof}
We must show that if $(C,\pn)$ is a prestable curve over an algebraically closed field $k$, the group scheme $\Aut_{k}(C,\pn)$ has finitely many $k$-points. Let $\{\tilde{q}_i\}_{i=1}^{m} \in \tilde{C}$ be the set of distinguished points of $\tilde{C}$. Any $k$-automorphism of $(C,\pn)$ induces a $k$-automorphism of $\tilde{C}$ mapping the set $\{\tilde{q}_i\}_{i=1}^{m}$ to itself. Since each rational component of $\tilde{C}$ has at least three distinguished points, and each elliptic component has at least one distinguished point, the set of such automorphisms is finite.
\end{proof}

Now let us describe the combinatorial data that goes into the construction of a stable modular compactification. If $(C, \pn)$ is a Deligne-Mumford stable curve, we may associate to $(C,\pn)$ its \emph{dual graph} $G$. The vertices of $G$ correspond to the irreducible components of $C$, the edges correspond to the nodes of $C$, and each vertex is labeled by the arithmetic genus of the corresponding component as well as the marked points supported on that component. The dual graph encodes the topological type of $(C, \pn)$, and for any fixed $g,n$, there are only finitely many isomorphism classes of dual graphs of $n$-pointed stable curves of genus $g$. 

\begin{figure}
\scalebox{.40}{\includegraphics{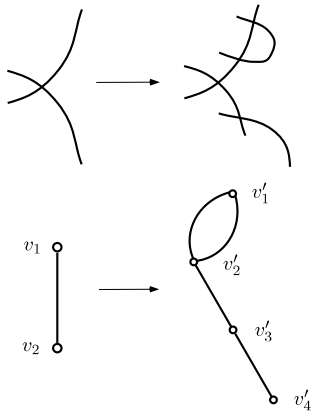}}
\caption{The specialization of dual graphs induced by a one-parameter specialization of stable curves. Note that $v_{1} \leadsto v_{1}' \cup v_{2}'$ and $v_{2} \leadsto v_{3}' \cup v_{4}'$.}\label{F:GraphSpecialization}
\end{figure}

We write $G \leadsto G'$ if there exists a stable curve $(\C \rightarrow \Delta, \sigman)$ over the spectrum of a discrete valuation ring with algebraically closed residue field, such that the geometric generic fiber has dual graph $G$ and the special fiber has dual graph $G'$. If $v$ is a vertex of $G$, and we have $G \leadsto G'$, we say that $G \leadsto G'$ induces $v \leadsto v_1' \cup \ldots \cup v_k'$ to indicate that the limit of the irreducible component corresponding to $v$ is the union of the irreducible components corresponding to $v_1', \ldots, v_k'$ (See Figure \ref{F:GraphSpecialization}). More precisely, if $\C \rightarrow \Delta$ is a one-parameter family witnessing the specialization $G \leadsto G'$, then (possibly after a finite base-change) we may identify the irreducible components of the geometric generic fiber with the irreducible components of $\C$. In particular, $v$ corresponds to an irreducible component $\C_{1} \subset \C$, and the limit of $v$ is simply the collection of irreducible components in the special fiber of $\C_{1}$.  Now we come to the key definition of this paper.
\begin{definition}[Extremal assignment over $\SM_{g,n}$]\label{D:Assignment}
Let $G_1, \ldots, G_N$ be an enumeration of dual graphs of $n$-pointed stable curves of genus $g$, up to isomorphism, and consider an assignment 
$$G_{i} \rightarrow \Z(G_i) \subset G_i, \text{ for each $i=1, \ldots, N$,}$$
where $\Z(G_i)$ is a subset of the vertices of $G_i$. We say that $\Z$ is an \emph{extremal assignment over $ \SM_{g,n}$} if it satisfies the following three axioms.
\begin{itemize}
\item[(1)] For any dual graph $G$, $\Z(G) \neq G$.
\item[(2)] For any dual graph $G$, $\Z(G)$ is invariant under $\Aut(G)$.
\item[(3)] For every specialization $G \leadsto G'$, inducing $v \leadsto v_1' \cup \ldots \cup v_k'$, we have $$v \in \Z(G) \iff v_1', \ldots, v_k' \in \Z(G').$$ 

\end{itemize}
\end{definition}
\begin{remark}
Axiom 2 in Definition \ref{D:Assignment} implies that an extremal assignment $\Z$ determines, for each stable curve $(C,\pn)$, a certain subcurve $\Z(C) \subset C.$ Indeed, we may chose an isomorphism of the dual graph of $(C,\pn)$ with $G_{i}$ for some $i$, and then define $\Z(C)$ to be the collection of irreducible components of $C$ corresponding to $\Z(G_i)$ under this isomorphism. By axiom 2, $\Z(C)$ does not depend on the choice of isomorphism.
\end{remark}

Given an extremal assignment $\Z$, we say that a curve is \emph{$\Z$-stable} if it can be obtained from a Deligne-Mumford stable curve $(C, \{p_i\}_{i=1}^{n})$ by replacing each connected component of $\Z(C) \subset C$ by an isolated curve singularity whose contribution to the arithmetic genus is the same as the subcurve it replaces. We make this precise in Definitions \ref{D:Genus} and \ref{D:Zstable} below.
 
\begin{definition}[Genus of a curve singularity]\label{D:Genus}
Let $p \in C$ be a point on a curve, and let $\pi: \tilde{C} \rightarrow C$ denote the normalization of $C$ at $p$. The $\delta$-invariant $\delta(p)$ and the number of branches $m(p)$ are defined by following following formulae:
\begin{align*}
\delta(p)&:=\dim_{k} (\pi_*\O_{\tilde{C}}/\O_{C}),\\
m(p)&:=|\pi^{-1}(p)|,
\end{align*}
and we define the \emph{genus} $g(p)$ by
\begin{align*}
g(p)&:=\delta(p)-m(p)+1.
\end{align*}
We say that a singularity $p \in C$ has \emph{type} $(g,m)$ if $g(p)=g$ and $m(p)=m$.
\end{definition}

\begin{definition}[$\Z$-stability]\label{D:Zstable}
A smoothable $n$-pointed curve $(C, \pn)$ is \emph{$\Z$-stable} if there exists a stable curve $(C^{s}, \spn)$ and a morphism  $\phi: (C^{s}, \spn) \rightarrow (C, \pn)$ satisfying
\begin{enumerate}
\item $\phi$ is surjective with connected fibers.
\item $\phi$ maps $C^{s}-\Z(C^{s})$ isomorphically onto its image.
\item If $Z_{1}, \ldots, Z_{k}$ are the connected components of $\Z(C^{s})$, then $p_i:=\phi(Z_i) \in C$ satisfies $g(p_i)=p_a(Z_i)$ and $m(p_i)=|Z_i \cap Z_i^c|$.
\end{enumerate}
\end{definition}

For any extremal assignment $\Z$, we define $\SM_{g,n}(\Z) \subset \V_{g,n}$ to be the set of points corresponding to $\Z$-stable curves. The following theorem is our main result.

\begin{theorem}[Classification of Stable Modular Compactifications]\label{T:Main}
\begin{itemize}
\item[]
\item[(1)] If $\Z$ is an extremal assignment over $\SM_{g,n}$, then $\SM_{g,n}(\Z) \subset \V_{g,n}$ is a stable modular compactification of $\M_{g,n}$.
\item[(2)] If $\X \subset \V_{g,n}$ is a stable modular compactification, then $\X=\SM_{g,n}(\Z)$ for some extremal assignment $\Z$.
\end{itemize}
\end{theorem}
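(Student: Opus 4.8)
\emph{Overview.} The plan is to reduce both implications to the known properties of $\SM_{g,n}$ by comparing a $\Z$-stable curve with its stabilization: locally via deformation theory, and over one-parameter bases via the valuative criterion.

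\emph{Part (1): $\SM_{g,n}(\Z)$ is a stable modular compactification.} I must check that $\SM_{g,n}(\Z)\subset\V_{g,n}$ is open, that its points are prestable, and that it satisfies the valuative criterion of properness. Prestability is immediate from Definition~\ref{D:Zstable} and axiom~(1): the normalization of a $\Z$-stable curve $C$ is covered by the images of the rational components of $\widetilde{C^s}$ not lying in $\Z(C^s)$, and each of these keeps its $\geq 3$ distinguished points, since marked points are untouched and every node joining it to a component of $\Z(C^s)$ is carried to a singular --- hence distinguished --- point of $C$. For openness I would show that $\Z$-stability is deformation-open by the following mechanism: the contraction $C^s\to C$ deforms, a versal deformation of the stable curve $C^s$ admits a fibrewise $\Z$-contraction, and the resulting family of $\Z$-stable curves induces a smooth surjection onto the versal deformation of $C$; since every fibre of this family is $\Z$-stable by construction, the $\Z$-stable locus in the versal deformation of $C$ is a neighbourhood of the origin. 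The key ingredient is a \emph{simultaneous contraction} statement: for a family of stable curves $\C^s\to S$, the subcurves $\Z(\C^s_s)$ can be contracted fibrewise to a flat family $\C^s\to\C$ over $S$ with reduced connected genus-$g$ fibres. Axiom~(3) enters essentially here: the locus $\bigcup_s\Z(\C^s_s)$ is not flat over $S$ --- it jumps under specialization, exactly as in Figure~\ref{F:GraphSpecialization} --- and axiom~(3) is precisely the combinatorial condition making the fibrewise contractions glue to a flat family, while Definition~\ref{D:Genus} is what keeps the arithmetic genus equal to $g$.

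\emph{Part (1), continued: properness.} Existence of limits is easy: given a one-parameter family of smooth curves, take its Deligne--Mumford stable limit $C^s$ and then the $\Z$-contraction of $C^s$; this contraction exists because $\Z(C^s)$ is a proper subcurve of a connected fibre of the total space, hence has negative-definite intersection matrix (Zariski's lemma), and the limit so produced is $\Z$-stable by Definition~\ref{D:Zstable}. For uniqueness of limits, let $\C_1,\C_2\to\Delta$ be $\Z$-stable limits of the same family (we may reduce to the case it is generically smooth, $\M_{g,n}$ being dense). Both $\C_i$ have the same stable model, namely the stable reduction $\C^s$ of the common generic fibre, and one checks that the fibrewise stabilizations glue to a morphism $\C^s\to\C_i$; since on the generic fibre $\Z$ is empty, $\C_i$ is the contraction of the subcurve $\Z(\C^s_0)$ of the special fibre of $\C^s$, and this contraction, where it exists, is unique --- so $\C_1\cong\C_2$ and $\SM_{g,n}(\Z)$ is separated. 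As only finitely many topological types of $\Z$-stable curves occur, $\SM_{g,n}(\Z)$ is quasi-compact, hence of finite type, and the valuative criterion gives properness.

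\emph{Part (2): every stable modular compactification is some $\SM_{g,n}(\Z)$.} Given $\X$, I reverse-engineer $\Z$. For a stable curve $(C^s,\spn)$ with dual graph $G$, pick a one-parameter smoothing; by properness of $\X$ it has a limit in $\X$, unique since $\X$ is separated, and comparing total spaces (resolve the rational map from the smoothing to the limiting family, then contract) shows the limit is $C^s$ with a well-defined subcurve contracted --- declare $\Z(G)$ to be the corresponding vertices. This is independent of the smoothing and of the choice of $C^s$ in its stratum, being a discrete invariant. Axiom~(1) holds since contracting all of $C^s$ produces no one-dimensional scheme; axiom~(2) since the limit, being unique, is canonically attached to $(C^s,\spn)$ and hence $\Aut$-equivariant; axiom~(3) since, running the construction over a family $\C^s\to\Delta$ realizing $G\leadsto G'$, one obtains a family of $\Z$-contractions whose fibres lie in the separated proper stack $\X$, and its flatness is equivalent to the biconditional of axiom~(3). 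Finally $\X=\SM_{g,n}(\Z)$: the inclusion $\supseteq$ holds because a $\Z$-stable curve is, by construction of $\Z$, the $\X$-limit of a smoothing of its stabilization, hence lies in $\X$; conversely, for $C\in\X$ --- which is prestable and smoothable --- applying separatedness of $\X$ to a smoothing of $C$ and to the $\Z$-contraction of the stable model of that smoothing's generic fibre (which also lies in $\X$, by $\supseteq$, and has the same generic fibre) identifies $C$ with a $\Z$-contraction of a stable curve, i.e. with a $\Z$-stable curve.

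\emph{Main obstacle.} The crux, in both directions, is the simultaneous $\Z$-contraction in families together with the flatness of the resulting family: because the $\Z$-locus jumps under specialization, this is not a formal consequence of contracting a curve inside a single surface, and axiom~(3) is exactly what makes it work --- in part~(2), read backwards, this same point is what forces $\Z$ to obey axiom~(3). A secondary subtlety is that the stabilization of a $\Z$-stable curve is not unique as an abstract curve (an elliptic tail attached at a cusp carries a free modulus), so every uniqueness statement must be phrased relative to a fixed one-parameter family, where Deligne--Mumford stable reduction restores canonicity.
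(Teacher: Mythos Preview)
Your overall architecture matches the paper's: part~(1) splits into openness plus the valuative criterion, and part~(2) extracts $\Z$ from $\X$ by recording what gets contracted. The properness arguments are essentially the same. But two steps are genuinely incomplete.

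\textbf{Openness.} You propose to build a simultaneous $\Z$-contraction over the versal deformation of $C^s$ and descend. You correctly flag this as the crux, but you do not carry it out, and it is hard: the $\Z$-locus is not flat over the base and the singularities produced are arbitrary (not Gorenstein), so no relative-Proj or line-bundle argument is available. The paper avoids this entirely by going the \emph{other} direction: given a family $(\C\to T,\sigman)$, it alters $T$ to produce a stable family $\C^s\to T$ together with a regular birational morphism $\phi:\C^s\to\C$ (Lemma~\ref{L:ExtendingCurves}), and then shows by a rigidity argument that $\{t:\Exc(\phi_t)=\Z(C^s_t)\}$ is open (Lemma~\ref{L:Rigidity}). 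This reduces openness to an elementary statement about components of flat families and never requires contracting anything over a higher-dimensional base.

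\textbf{Well-definedness of $\Z$ and the equality $\X=\SM_{g,n}(\Z)$.} Your sentence ``independent of the smoothing \ldots\ being a discrete invariant'' and your axiom~(2) argument ``canonically attached, hence $\Aut$-equivariant'' are too quick. Two smoothings of $C^s$ may have $\X$-limits whose contracted loci differ by an automorphism of $G$; ruling this out is precisely where separatedness of $\X$ does work, via a contradiction argument (Proposition~\ref{P:ExceptionalLocus}(b), Lemma~\ref{L:TwistingX}). Your appeal to canonicity is circular: the $\X$-limit is canonical, but identifying its contracted locus with a subgraph of $G$ requires choosing an isomorphism $G_t\simeq G$, and that choice is harmless only once axiom~(2) is known. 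Relatedly, your inclusion $\SM_{g,n}(\Z)\subseteq\X$ has a gap: a $\Z$-stable curve $C$ comes with \emph{some} contraction $C^s\to C$ with $\Exc=\Z(C^s)$, but the $\X$-limit of a smoothing of $C^s$ is a priori some \emph{other} curve obtained by contracting $\Z(C^s)$ (there are many singularities of a given type $(g,m)$), and one needs an argument that $C$ itself arises as an $\X$-limit. The paper handles this in Lemma~\ref{L:TwistingX}, and then gets the equality $\X=\SM_{g,n}(\Z)$ from the single inclusion $\X\subseteq\SM_{g,n}(\Z)$ plus properness of both sides.
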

\begin{proof}
See Theorems \ref{T:Construction} and \ref{T:Classification}.
\end{proof}
Since the definition of an extremal assignment is purely combinatorial, one can (in principal) write down all extremal assignments over $\SM_{g,n}$ for any fixed $g$ and $n$. Thus, we obtain a complete classification of the collection of stable modular compactifications of $\M_{g,n}$. Before proceeding, let us consider some examples of extremal assignments, and describe the corresponding stability conditions.

\begin{example}[Destabilizing elliptic tails]\label{E:FirstAssignments}
Consider the assignment defined by
$$
\Z(C, \pn)=\{ Z \subset C \,|\, p_a(Z)=1, |Z \cap Z^{c}|=1, Z \text{ is unmarked }\}.
$$
If we call an subcurve $Z \subset C$ satisfying $p_a(Z)=1$ and $|Z \cap Z^{c}|=1$ an \emph{elliptic tail}, we may say that the assignment $\Z$ is defined by picking out all unmarked elliptic tails of $(C, \pn)$. This defines an extremal assignment over $\SM_{g,n}$ provided that $g>2$ or $n>1$. The case $(g,n)=(2,0)$ is forbidden because an unmarked genus two curve may be the union of two elliptic tails. Assuming $(g,n) \neq (2,0)$, axioms 1 and 2 are obvious. For axiom 3, simply note that if $v \in G$ corresponds to an elliptic tail, then any specialization of dual graphs $G \leadsto G'$ necessarily induces a specialization $v \leadsto v'$, where $v'$ is also an elliptic tail. Thus, $v \in \Z(G) \iff v' \in \Z(G')$ as required.

Now let us consider the associated $\Z$-stability condition. By definition, an $n$-pointed curve $(C, \pn)$ is $\Z$-stable if there exists a map from a stable curve $(C^s, \spn) \rightarrow (C, \pn)$ which is an isomorphism away from the locus of elliptic tails, and contracts each elliptic tail of $C^{s}$ to a singularity of type $(1,1)$. It is elementary to check that the unique curve singularity of type $(1,1)$ is a cusp $(y^2-x^3)$. Thus, an $n$-pointed curve $(C, \pn)$ is \emph{$\Z$-stable} for this assignment iff it satisfies:
\begin{itemize}
\item[(1)]$C$ has only nodes and cusps as singularities.
\item[(2)] The marked points $\pn$ are smooth and distinct.
\item[(3)] Each rational component of $\tilde{C}$ has at least three distinguished points.
\item[(4)] If $E \subset C$ is an unmarked arithmetic genus one subcurve, $|E \cap E^{c}| \geq 2$.
\end{itemize}
When $n=0$, this is precisely the definition of \emph{pseudostability} introduced in \cite{Schubert} and further studied in \cite{Hassett3}.
\end{example}
\begin{example} (Destabilizing rational tails) Consider the assignment defined by
$$
\Z(C)=\{ Z \subset C \,|\, p_a(Z)=0, |Z \cap Z^{c}|=1, |\{p_i \in Z\}| \leq k \}.
$$
If we call a subcurve $Z \subset C$ satisfying $p_a(Z)=0$ and $|Z \cap Z^{c}|=1$ a \emph{rational tail}, we may say that the assignment $\Z$ is defined by picking out all rational tails of $(C, \pn)$ with $\leq k$ marked points. This defines an extremal assignment over $\SM_{g,n}$ provided that $g>0$ or $n>2k$. The case $g=0$ and $n \leq 2k$ is forbidden because such stable curves may be the union of two rational tails with $\leq k$ marked points. If $g>0$ or $n>2k$, axioms 1 and 2 are easily verified. Axiom 3 is also obvious, bearing in mind that we do not require a rational tail $Z \subset C$ to be irreducible.

Now let us consider the associated $\Z$-stability condition. An $n$-pointed curve $(C, \pn)$ is $\Z$-stable if there exists a map from a stable curve $(C^s, \spn) \rightarrow (C, \pn)$ which is an isomorphism away from the locus of rational tails with $\leq k$ points, and contracts each such rational tail to a point of type (0,1) on $C$. It follows directly from the definition that the unique `singularity' of type (0,1) is a smooth point. Thus, an $n$-pointed curve $(C, \pn)$ is $\mathcal{\Z}$-stable for this assignment iff it satisfies:
\begin{itemize}
\item[(1)]$C$ has only nodes as singularities.
\item[(2)] The marked points $\pn$ are smooth, and up to $k$ points may coincide.
\item[(3)] Each rational component of $\tilde{C}$ has at least three distinguished points.
\item[(4)] If $Z \subset C$ is a rational tail, then $|\{p_i: p_i \in Z\}| >k$.
\end{itemize}
This is equivalent to the definition of $\A$-stability introduced in \cite{Hassett1} with symmetric weights $\A=\{1/k, \ldots, 1/k\}$.
\end{example}

\begin{example}\label{E:CrazyAssignment} (Destabilizing all unmarked components) Consider the assignment defined by
$$
\Z(C, \pn)=\{ Z \subset C \,|\, Z\text{ is unmarked\! }\}.
$$
As long as $n\geq1$, this assignment clearly satisfies axioms 1-3 of Definition \ref{D:Assignment}. The corresponding $\Z$-stable curves have all manner of exotic singularities. In fact, for any pair of integers $(h,m)$, there exists $g>>0$ such that $n$-pointed stable curves of genus $g$ contain unmarked subcurves $Z \subset C$ satisfying $p_{a}(Z)=h$ and $|Z \cap Z^{c}|=m$. It follows that every smoothable curve singularity of type $(h,m)$ appears on a $\Z$-stable curve for $g>>0.$ The corresponding moduli spaces $\SM_{g,n}(\Z)$ have no counterpart in the existing literature.\\
\end{example}

\subsection{Consequences of main result}\label{S:Consequences}
In this section, we describe several significant consequences of Theorem \ref{T:Main}. First, we will show that the number of extremal assignments over $\SM_{g,n}$ is a rapidly increasing function of both $g$ and $n$ by explaining how $\pi$-nef line-bundles on the universal curve $\pi:\C \rightarrow \SM_{g,n}$ induce extremal assignments. We deduce the existence of many new stability conditions which have never been described in the literature. Next, we explain why $\Z$-stability nevertheless fails to give an entirely satisfactory theory of stability conditions for curves. We will see, for example, that there is no $\Z$-stability condition picking out only curves with nodes $(y^2=x^2)$, cusps $(y^2=x^3)$, and tacnodes $(y^2=x^4)$, and indicate how a systematic study of semistable compactifications might remedy this deficiency. Finally, we will show that $\Z$-stability \emph{does} give a satisfactory theory of stability conditions in the case $g=0$. We will see that every modular compactification of $M_{0,n}$ must be stable, so our result actually gives a complete classification of modular compactifications of $M_{0,n}$.

\subsubsection{Extremal assignments from $\pi$-nef line-bundles}\label{SS:NefAssignments}
Let $\SM_{g,n}$ denote the moduli stack of stable curves over an algebraically closed field of characteristic zero, and let $\pi:\C \rightarrow \SM_{g,n}$ denote the universal curve. The following lemma shows that numerically-nontrivial $\pi$-nef line-bundles on $\C$ induce extremal assigments. (In this context, to say that $\L$ is $\pi$-nef and numerically-nontrivial simply means that $\L$ has non-negative degree on every irreducible component of every fiber of $\pi$ and positive degree on the generic fiber.)

\begin{lemma}\label{L:NefAssignments}
Let $\L$ be a $\pi$-nef, numerically non-trivial line-bundle on $\C$. Then $\L$ induces an extremal assignment by setting:
$$
\Z(C,\pn):=\{Z \subset C |\, \deg(\L|_{Z})=0 \,\},
$$
for each stable curve $[C,\pn] \in \SM_{g,n}$.
\end{lemma}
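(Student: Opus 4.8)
The plan is to check the three axioms of Definition \ref{D:Assignment} directly; the common mechanism is that $\pi$-nefness turns the $\L$-degree into a nonnegative, additive, deformation-invariant quantity on the components of a fiber. I would start with two preliminary observations. First, since $\L$ is $\pi$-nef, $\deg(\L|_{C_v})\ge 0$ for every irreducible component $C_v$ of every fiber of $\pi$, so for a subcurve $Z=\bigcup_v C_v$ we have $\deg(\L|_Z)=\sum_v\deg(\L|_{C_v})$, and this vanishes exactly when $\deg(\L|_{C_v})=0$ for each component $C_v\subseteq Z$. Hence $\Z(C,\pn)$ is a union of irreducible components, cut out by the vertices $v$ of the dual graph with $\deg(\L|_{C_v})=0$, which is exactly the format of an extremal assignment. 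Second, since $\C\to\SM_{g,n}$ is flat and proper with one-dimensional fibers, Riemann--Roch gives $\deg(\L|_{\C_t})=\chi(\C_t,\L|_{\C_t})-\chi(\C_t,\O_{\C_t})$ with both Euler characteristics locally constant in $t$; applied to the components of the universal curve over the irreducible variety $\prod_{v\in V(G)}\overline{M}_{g_v,n_v}$ (glued along the edges of $G$) that parametrizes stable curves of topological type $G$, this shows $\deg(\L|_{C_v})$ depends only on the pair $(G,v)$ up to isomorphism. So $G_i\mapsto \Z(G_i):=\{\,v\in V(G_i):\deg(\L|_{C_v})=0\,\}$ is well defined.

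Axioms 1 and 2 then fall out. For Axiom 1, local constancy of $\deg(\L|_{\C_t})$ and connectedness of $\SM_{g,n}$ force this degree to equal its value at the generic point, which is positive because $\L$ is numerically nontrivial; hence $\deg(\L|_C)>0$ for every stable curve $C$, so its components cannot all have $\L$-degree zero, i.e. $\Z(G)\ne G$. For Axiom 2, since $\deg(\L|_{C_v})$ depends only on $(G,v)$ up to isomorphism, an automorphism $\phi\in\Aut(G)$ gives $\deg(\L|_{C_v})=\deg(\L|_{C_{\phi(v)}})$ for all $v$; concretely, one moves within the connected family of curves of type $G$ to a maximally symmetric curve on which $\phi$ is induced by an honest automorphism of $(C,\pn)$, and such an automorphism preserves $\L$-degrees of components because $\L$, being a line bundle on the universal curve over the \emph{stack}, is $\Aut(C,\pn)$-equivariant. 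Thus $\Z(G)$ is $\Aut(G)$-invariant.

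For Axiom 3, I would take a specialization $G\leadsto G'$ witnessed by a family of stable curves $\C\to\Delta=\Spec R$ over a DVR with algebraically closed residue field, inducing $v\leadsto v_1'\cup\cdots\cup v_k'$. After a finite base change, identify the irreducible components of $\C_\eta$ with those of the total space $\C$, and let $Y\subseteq\C$ be the one corresponding to $v$. Then $Y\to\Delta$ is proper and flat (it is integral and dominates $\Delta$, hence $R$-torsion-free), with generic fiber $C_v$ and with special fiber $Y_0$ supported, by definition of the induced specialization, on $v_1'\cup\cdots\cup v_k'$, each $v_i'$ occurring with multiplicity $m_i\ge1$. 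Constancy of the $\L$-degree in the flat proper family $Y\to\Delta$ then yields
$$\deg(\L|_{C_v})\;=\;\deg(\L|_{Y_0})\;=\;\sum_{i=1}^k m_i\,\deg(\L|_{v_i'}),$$
and since each $\deg(\L|_{v_i'})\ge0$ and each $m_i\ge1$, the right-hand side is zero if and only if $\deg(\L|_{v_i'})=0$ for every $i$. That is, $v\in\Z(G)\iff v_1',\dots,v_k'\in\Z(G')$, which is Axiom 3.

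I do not expect any step to be genuinely hard: the real content is the opening observation that $\pi$-nefness simultaneously provides additivity over components and monotonicity under specialization. The fussiest points are the bookkeeping in Axiom 3 — matching the scheme-theoretic special fiber of the closure $Y=\overline{C_v}$ with the combinatorially defined limit $v_1'\cup\cdots\cup v_k'$, and confirming that the multiplicities $m_i$ cause no trouble — together with the realization of $\Aut(G)$ by curve automorphisms used in Axiom 2.
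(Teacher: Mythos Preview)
Your proof is correct and follows the same overall strategy as the paper: verify the three axioms using nefness, constancy of degree in flat families, and additivity. Your treatments of Axioms 1 and 3 are essentially identical to the paper's (you are slightly more explicit about the possible multiplicities $m_i$ in the special fiber of $Y\to\Delta$, which is harmless since nefness makes each summand nonnegative).

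The one place you genuinely diverge is Axiom 2. The paper dispatches it in a single line by invoking the Arbarello--Cornalba description of $\Pic_{\Q}(\C/\SM_{g,n})$: the relative Picard group is generated by classes ($\omega_\pi$, $\sigma_i$, $E_{i,S}$) whose degree on a component manifestly depends only on the labelled dual graph, hence the same holds for any $\L$. Your argument instead combines constancy of the degree over the connected stratum $\M_G$ with the observation that any $\phi\in\Aut(G)$ is realized by an honest automorphism of some maximally symmetric curve, and that $\L$ is automatically equivariant for such automorphisms because it lives on the universal curve over the stack. Both arguments are valid; the paper's is shorter but imports a structural result, while yours is more self-contained and would work even without knowing generators for the Picard group.
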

\begin{proof}
We must check that the assignment $\Z$ satisfies axioms 1-3 in Definition \ref{D:Assignment}. For axiom 1, observe that since $\L$ is $\pi$-nef and numerically non-trivial, $\L$ must have positive degree on some irreducible component of each geometric fiber of $\pi$. For axiom 2, recall  that $\Pic_{\Q}(\C/\SM_{g,n})$ is generated by line-bundles whose degree on any irreducible component of a fiber of $\pi$ depends only on the dual graph of the fiber \cite{AC}. For axiom 3, consider any specialization $G \leadsto G'$ induced by a one-parameter family of stable curves $(\C^s \rightarrow \Delta, \sigman)$. We have a Cartesian diagram
\[
\xymatrix{
\C^s \ar[d] \ar[r]^{f}&\C \ar[d]\\
\Delta \ar[r]& \SM_{g,n},
}
\]
and after a finite base-change, we may assume that the irreducible components of $\C^{s}$ are in bijective correspondence with the irreducible components of the geometric generic fiber, i.e. we have $\C^s \simeq \C_{1} \cup \ldots \cup \C_{k}$ with each $\C_{i} \rightarrow \Delta$ having smooth generic fiber. Now $f^*\L$ has degree zero on the generic fiber of $\C_{i} \rightarrow \Delta$ iff it has degree zero on every irreducible component of the special fiber. This is precisely the statement of axiom 3.
\end{proof}

Translated into the language of higher-dimensional geometry, this lemma says that every face of the relative cone of curves $\EffCurves$ gives rise to a stable modular compactification of $\M_{g,n}$. In Appendix A, we give an explicit definition of $\EffCurves$ as a closed polyhedral cone in $\Pic_{\Q}(\C/\SM_{g,n})$, and describe the stability conditions corresponding to each extremal face in the cases $(g,n)=(2,0), (3,0), (2,1)$. In general, since $\EffCurves$ is a full polyhedral cone in a vector space of dimension $\rho(\C /\SM_{g,n})$, it is clear that the number of extremal faces of $\EffCurves$ (and hence the number of extremal assignments over $\SM_{g,n}$) is a rapidly increasing function of both $g$ and $n$.

\subsubsection{Singularities arising in stable compactifications}\label{SS:StableGeometry}
While Lemma \ref{L:NefAssignments} shows that there exist many stability conditions for curves, it does not provide much insight into the following natural question: Given a deformation-open class of curve singularities, is there a stability condition which picks out curves with precisely this class of singularities? We have already seen that the anwer is yes if the class consists of nodes or nodes and cusps. In general, however, one cannot always expect an affirmative answer using only stability conditions on prestable curves. Indeed, Corollaries \ref{C:1} and \ref{C:2} below show that the collection of stable modular compactifications of $\M_{g,n}$ is severely constrained by two features: the necessity of compactifyng the moduli of attaching data of a singularity (a local obstruction) and the presence of symmetry in dual graphs of stable curves (a global obstruction).

Let us say that a given curve singularity \emph{arises in a modular compactification $\X$} if $\X$ contains a geometric point $[C, \pn] \in \X$ such that $C$ possesses this singularity.
\begin{corollary}\label{C:1}
Let $\X$ be a stable modular compactification of $\M_{g,n}$. If one singularity of type $(h,m)$ arises in $\X$, then every singularity of type $(h,m)$ arises in $\X$.
\end{corollary}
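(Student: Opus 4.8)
The plan is to deduce the statement from Theorem~\ref{T:Main}(2), which lets us write $\X=\SM_{g,n}(\Z)$ for an extremal assignment $\Z$ over $\SM_{g,n}$, so that the points of $\X$ are exactly the smoothable $\Z$-stable curves. I would begin with two harmless reductions. First, every singularity occurring on a curve $[C,\pn]\in\X\subset\V_{g,n}$ is smoothable, since restricting a smoothing of $C$ to a formal neighborhood of a singular point $p$ produces a smoothing of the germ $(C,p)$; thus a non-smoothable singularity cannot occur in any modular compactification, and we may assume throughout that the type-$(h,m)$ singularities in question are smoothable. Second, if $(h,m)\in\{(0,1),(0,2)\}$ the only singularity of that type is a smooth point or a node and the statement is vacuous, so we may assume every singularity of type $(h,m)$ has $\delta$-invariant $\geq 1$ and, in particular, is not a node.

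The next step is to extract a stable curve carrying the relevant contracted subcurve. Suppose a singularity $p_0$ of type $(h,m)$ occurs on $[C_0,\pn]\in\X$. By Definition~\ref{D:Zstable} there exist a stable curve $(C^s,\spn)$ and a morphism $\phi\colon(C^s,\spn)\to(C_0,\pn)$ with connected fibers, restricting to an isomorphism on $C^s-\Z(C^s)$, and contracting each connected component $Z_i$ of $\Z(C^s)$ to a point $\phi(Z_i)$ of type $\big(p_a(Z_i),\,|Z_i\cap Z_i^c|\big)$. Because $C^s$ is nodal, any singular point of $C_0$ at which $\phi$ is a local isomorphism is a node; and because $\phi$ has connected fibers the points $\phi(Z_i)$ are distinct, and by axiom~(3) of Definition~\ref{D:Zstable} --- which fixes the type of $\phi(Z_i)$ exactly --- none of them lands on a node of $\phi\big(C^s-\Z(C^s)\big)$. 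Since $p_0$ is not a node, it must be $\phi(Z)$ for a single connected component $Z\subseteq\Z(C^s)$, so that $p_a(Z)=h$ and $|Z\cap Z^c|=m$. I would also fix a smoothing $\rho\colon\mathcal C\to\Delta$ of $C_0$ over a discrete valuation ring $R$ with smooth generic fiber $\mathcal C_\eta\in\M_{g,n}$ and with $n$ sections extending the marked points; this exists because $C_0$ is smoothable.

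The heart of the proof is a local surgery on the singular point. Let $q\in D$ be an arbitrary smoothable singularity of type $(h,m)$. Since $p_0$ and $q$ each have exactly $m$ branches, every branch a smooth formal disk, a punctured formal neighborhood of either point is a disjoint union of $m$ punctured formal disks over the relevant base; so I can form a curve $C_q$ by replacing the formal germ of $p_0$ in $C_0$ with the formal germ of $q$ (any matching of branches will do). By construction $\phi$ induces a morphism $\phi_q\colon C^s\to C_q$ exhibiting $C_q$ as $\Z$-stable --- provided that $C_q$ is smoothable, i.e.\ that $C_q\in\V_{g,n}$. To see this, in the smoothing $\rho\colon\mathcal C\to\Delta$ I would replace a formal neighborhood of $p_0$ by a smoothing $\mathcal D\to\Delta$ of the germ $(D,q)$ --- taken, after a finite base change of $R$, to carry exactly as many sections through $q$, pairwise disjoint over $\eta$, as $C_0$ has marked points at $p_0$ (arrangeable since the branches of $q$ are smooth) --- gluing $\mathcal C$ and $\mathcal D$ along their common punctured-disk overlap $\coprod_{i=1}^m\Spec R((z_i))$ by a Beauville--Laszlo/Ferrand-type formal gluing. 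The resulting proper flat family $\mathcal C'\to\Delta$ carries $n$ sections (those of $\mathcal C$ away from $p_0$, together with the new ones through $q$), has special fiber $C_q$, and has generic fiber obtained from the smooth curve $\mathcal C_\eta$ by excising $m$ disks and gluing in $\mathcal D_\eta$ --- hence a smooth connected curve, of arithmetic genus $g$ since $q$ has the same $\delta$-invariant as $p_0$ (so $p_a(C_q)=p_a(C_0)=g$), with $n$ distinct marked points. Therefore $[C_q,\pn]\in\V_{g,n}$, and being $\Z$-stable it lies in $\SM_{g,n}(\Z)=\X$; as $C_q$ carries the singularity $q$, the argument is complete.

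I expect the main obstacle to be this last gluing step: making the formal gluing of families over a discrete valuation ring precise (this is the standard device used to construct degenerations of curves) and then verifying carefully that the new generic fiber is indeed a smooth connected genus-$g$ curve carrying $n$ pairwise disjoint sections --- the section bookkeeping being delicate when $Z$ is a marked component, where one must choose the local smoothing $\mathcal D$ of $q$ (and base-change the valuation ring) with care. A lesser pitfall lies in the extraction step, where one must rule out that $p_0$ is an accidental coincidence of a contracted $\Z$-component with a node of the rest of the curve; this is exactly what axiom~(3) of Definition~\ref{D:Zstable} forbids.
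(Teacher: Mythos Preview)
Your overall approach coincides with the paper's: write $\X=\SM_{g,n}(\Z)$ via Theorem~\ref{T:Main}(2), extract from the given $\Z$-stable curve a stable curve $(C^{s},\spn)$ and a connected component $Z\subset\Z(C^{s})$ with $p_a(Z)=h$ and $|Z\cap Z^{c}|=m$, and then observe that the definition of $\Z$-stability lets you replace the singularity $\phi(Z)$ by an arbitrary smoothable singularity of type $(h,m)$. Your extraction step is correct (if more cautious than necessary).

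The one substantive divergence is how you handle smoothability of the surgered curve $C_q$, and here your proposed route is both harder and, as written, not correct. The ``punctured-disk overlap'' you describe, $\coprod_{i=1}^{m}\Spec R((z_i))$, is what one sees in the special fiber, not in the total space: $p_0$ is a single closed point of the surface $\mathcal{C}$, its formal neighborhood is a two-dimensional formal scheme, and removing $p_0$ does not yield a disjoint union of one-dimensional punctured disks over $R$. A Beauville--Laszlo/Ferrand gluing of the two families along that locus therefore does not make sense as stated, and there is no reason the miniversal deformations of $p_0$ and $q$ should match so as to allow a direct swap inside $\mathcal{C}$. The paper sidesteps this entirely: once $C_q$ is built as an abstract curve (Ferrand pushout of the partial normalization $\tilde{C}_0$ with the germ of $q$), its smoothability follows from the local-to-global principle that a complete reduced curve is smoothable iff each of its singularities is smoothable (see the reference to Koll\'ar I.6.10 in the proof of Corollary~\ref{C:GenusZeroSmoothability}). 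Since all singularities of $C_q$ agree with those of the smoothable curve $C_0$ except that $p_0$ is replaced by the smoothable germ $q$, this gives $[C_q,\pn]\in\V_{g,n}$ immediately, with no family-level surgery needed.
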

\begin{proof}
We have $\X=\SM_{g,n}(\Z)$ for some extremal assignment $\Z$. If a singularity of type $(h,m)$ appears on some $\Z$-stable curve, there exists a stable curve $(C^{s}, \spn)$ and a connected component $Z \subset \Z(C^{s})$ such that $p_a(Z)=h$ and $|Z \cap Z^{c}|=m$. Since the definition of $\Z$-stability allows $Z$ to be replaced by any singularity of type $(h,m)$, it follows that \emph{all} singularities of type $(h,m)$ arise in $\X$.
\end{proof}
\begin{figure}
\scalebox{.50}{\includegraphics{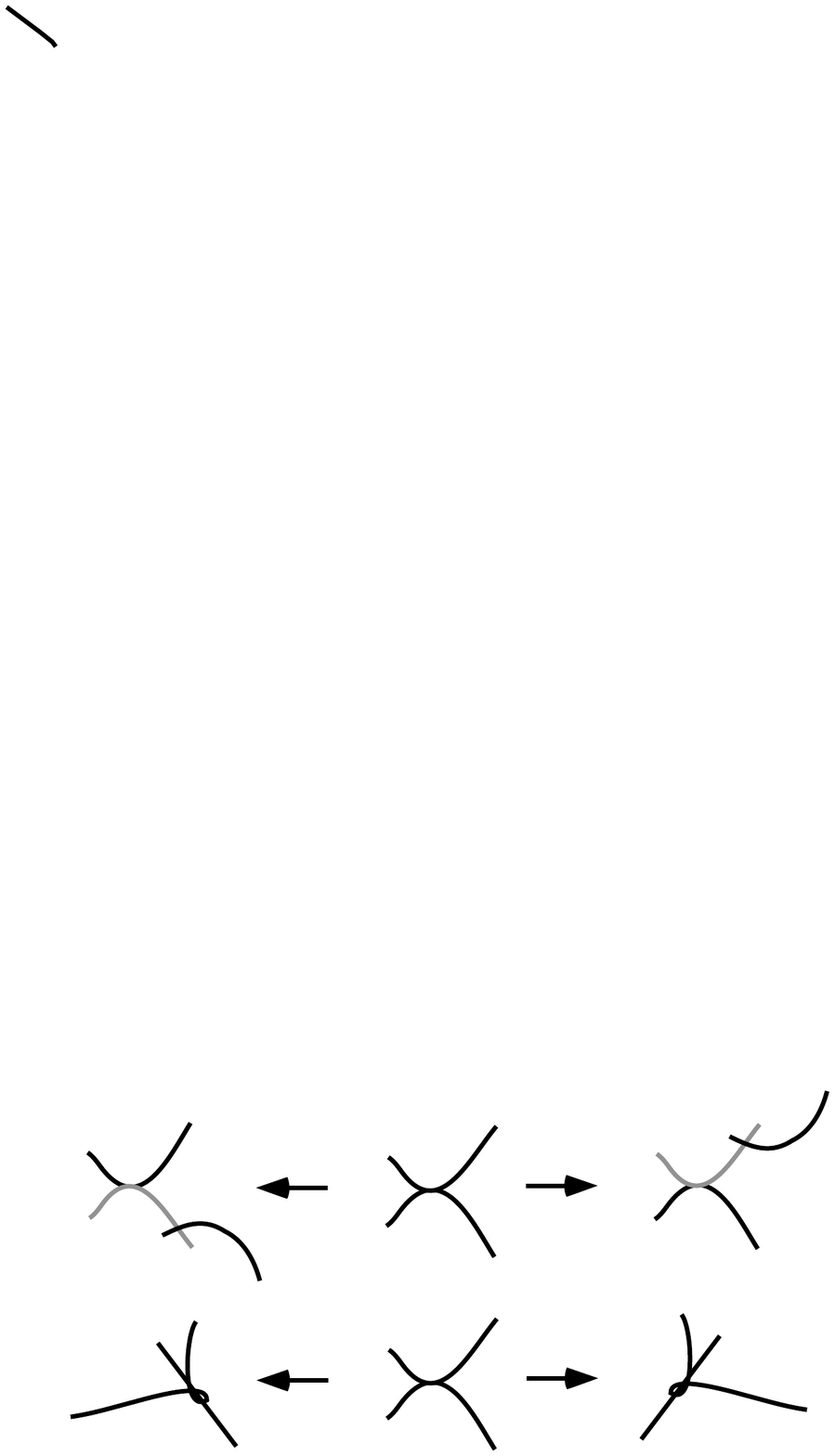}}
\caption{Two methods for compactifying the $k^*$-moduli of attaching data of the tacnode. In a stable modular compactification, one must compactify by degenerating to a cusp with a transverse branch. In  a semistable modular compactification, one may compactify by allowing the normalization to sprout additional rational components.}\label{F:AttachingData}
\end{figure}
This corollary precludes the existence of a stability condition on prestable curves picking out precisely nodes, cusps, and tacnodes. Indeed, one easily checks that the spatial singularity obtained by passing a smooth branch through the tangent plane of a cusp, i.e.
$$\hat{\O}_{C,p} \simeq k[[x,y,z]]/((x,y) \cap (z,y^2-x^3)),$$
has the same genus (1) and number of branches (2) as the tacnode. Thus, any stability condition on prestable curves which allows tacnodes must allow this spatial singularity as well.

The geometric phenomenon responsible for this implication is the existence of moduli of `attaching data' for a tacnode. Unlike nodes or cusps, the isomorphism class of a tacnodal curve $C$ is not uniquely determined by its pointed normalization $(\tilde{C},q_1,q_2)$; one must also specify an element $\lambda \in \text{Isom}(T_{q_1}\tilde{C}, T_{q_2}\tilde{C}) \simeq k^{*}$. As $\lambda \rightarrow 0$ or $\infty$, the tacnodal curve degenerates into a cusp with a transverse branch (see Figure \ref{F:AttachingData}). Note, however, that in a semistable compactification, one may compactify moduli of attaching data by sprouting additional rational components (see Figure \ref{F:AttachingData}). Indeed, this alternate method of compactification is used in \cite{Smyth1} to construct strictly semistable modular compactifications of $\M_{1,n}$ for every deformation-open class of genus-one Gorenstein singularities.

Since one cannot use stability conditions on prestable curves to pick out arbitrary deformation-open classes of singularities, let us consider the weaker question: Does every curve singularity appear on some stable modular compactification of $\M_{g,n}$ for suitable $g$ and $n$? Surprisingly, the answer is `yes' if $n = 1$, but `no' if $n=0$. In fact, the following corollary shows that a ramphoid cusp $(y^2=x^5)$ can never arise in a stable modular compactification of $\M_{g}$.

\begin{corollary}\label{C:2}
\begin{enumerate}
\item[]
\item Every smoothable curve singularity arises in some stable modular compactification of $\M_{g,1}$ for $g>>0$. 
\item No singularity of genus $\geq 2$ arises in any stable modular compactification of $\M_{g}$.
\end{enumerate}
\end{corollary}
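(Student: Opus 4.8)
The plan is to deduce both parts from the combinatorial structure of extremal assignments, using the main result to reduce to statements about the assignment $\Z$ and then analyzing specializations of dual graphs.

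\textbf{Part (1).} Fix a smoothable curve singularity of type $(h,m)$. The strategy is to exhibit, for suitable $g$, an extremal assignment $\Z$ over $\SM_{g,1}$ such that some $\Z$-stable curve acquires this singularity; then Theorem \ref{T:Main}(1) gives the desired stable modular compactification. The natural candidate is the ``destabilize all unmarked components'' assignment of Example \ref{E:CrazyAssignment}, which is an extremal assignment as soon as $n = 1 \geq 1$. As noted in that example, for any $(h,m)$ there is $g \gg 0$ such that a $1$-pointed stable curve of genus $g$ contains an unmarked connected subcurve $Z$ with $p_a(Z) = h$ and $|Z \cap Z^c| = m$ --- for instance, attach to a fixed pointed curve a genus-$h$ ``blob'' meeting the rest of the curve in $m$ nodes, and pad the genus as needed. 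By Definition \ref{D:Zstable}, the $\Z$-stable curve obtained by contracting $Z$ to a singularity of type $(h,m)$ realizes the given singularity (here we use that the singularity is smoothable, so it is a legitimate replacement and the resulting curve lies in $\V_{g,1}$). This part is essentially a direct application of the earlier examples; the only thing to check carefully is the existence of the requisite stable curve, which is elementary.

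\textbf{Part (2).} Here the point is the ``global obstruction'' coming from symmetry in dual graphs. Suppose for contradiction that $\X = \SM_{g,n}(\Z)$ is a stable modular compactification of $\M_g$ (so $n = 0$) in which a singularity of genus $h \geq 2$ arises. By the proof of Corollary \ref{C:1}, there is a stable curve $(C^s)$ and a connected component $Z \subset \Z(C^s)$ with $p_a(Z) = h \geq 2$. The plan is to construct a specialization of dual graphs that forces a contradiction with axioms (1) and (3). Since $Z$ has arithmetic genus $\geq 2$ and no marked points (as $n=0$), I would specialize $C^s$ so that the component corresponding to $Z$ degenerates into (or is already built from) \emph{two} isomorphic copies of a genus-$h'$ piece glued symmetrically --- concretely, degenerate $Z$ to $Z_1 \cup Z_2$ where $Z_1 \cong Z_2$, each of arithmetic genus roughly $h/2$ or meeting the complement in a balanced way, and then smooth all nodes external to $Z_1 \cup Z_2$ to obtain a stable curve $D^s$ whose entire curve is $Z_1 \cup Z_2$ (or in which $Z_1 \cup Z_2$ is a symmetric configuration). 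Applying axiom (3) to the two specializations (the degeneration of $Z$ and the smoothing of external nodes), we get that the vertices of $Z_1$ and $Z_2$ both lie in $\Z(D^s)$. But if $D^s$ itself is the union $Z_1 \cup Z_2$ of two isomorphic halves, then $\Aut(G')$ swaps them, so by axiom (2) either both or neither is in $\Z(D^s)$, and since it must be both, $\Z(D^s) = G'$, contradicting axiom (1). The arithmetic that makes this work is that a genus $\geq 2$ stable curve (unmarked) admits such a symmetric degeneration into two equal pieces --- e.g. a genus-$2$ curve degenerating to two elliptic components joined at a point, or more generally a ``banana''/theta-graph type configuration --- whereas genus $0$ or $1$ does not, which is exactly why the examples ``destabilizing elliptic tails'' and ``destabilizing rational tails'' required the exclusions $(g,n) \neq (2,0)$ and $(g,n)=(0,n\le 2k)$.

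\textbf{Main obstacle.} The delicate point in Part (2) is producing the symmetric degeneration in \emph{every} genus $g$ and for \emph{every} genus-$\geq 2$ subcurve $Z$ that could occur as a connected component of $\Z(C^s)$: one must show that any unmarked genus-$h\ge 2$ subcurve meeting its complement in $m$ nodes can be specialized (via a chain of dual-graph specializations staying inside $\SM_{g}$) to a configuration admitting a genus-preserving $\mathbb{Z}/2$ symmetry that is compatible with the axioms, and then that the external nodes can be smoothed to land at a stable curve where this symmetry is visible on the whole dual graph. Handling the case distinctions on the parity of $h$ and on $m$ (and ensuring the total genus stays $g$, possibly by first specializing the complement) is where the real work lies; the automorphism/axiom-(2) argument at the end is then immediate. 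I expect one can reduce to a single clean model --- a genus-$2$ subcurve that is a union of two elliptic tails --- by first degenerating $Z$ to split off such a subcurve, exactly as in the commented-out portion of the proof of Corollary \ref{C:1}.
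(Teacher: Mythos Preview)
Part (1) is fine and matches the paper. For Part (2), your overall strategy---use axiom (3) along a chain of specializations/generizations to land at a dual graph whose automorphism group, via axiom (2), forces $\Z(G)=G$ and hence a contradiction with axiom (1)---is exactly right. But the specific construction you propose does not close.

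The problem is with ``smooth all nodes external to $Z_1\cup Z_2$ to obtain a stable curve $D^s$ whose entire curve is $Z_1\cup Z_2$.'' This is only possible when $p_a(Z)=g$, i.e.\ when $Z=C^s$ (already a contradiction). In the generic case $p_a(Z)=h<g$, the complement $Z^c$ carries genus, and smoothing the external nodes merges $Z^c$ with whichever of $Z_1,Z_2$ it was attached to; the resulting curve is not $Z_1\cup Z_2$, and in the generization the component that absorbs $Z^c$ is no longer in $\Z$ (by axiom (3) applied in reverse, since $Z^c\notin\Z$). Your fallback to two elliptic \emph{tails} has the same defect: in a genus-$g$ unmarked stable curve with $g>2$, an elliptic tail and its complement are not exchanged by $\Aut(G)$, so axiom (2) gives nothing.

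The paper supplies the missing construction: instead of a tail, split off from $Z$ an elliptic \emph{bridge} $E$ (genus one, meeting its complement in two points---this uses $p_a(Z)\ge 2$), smooth all nodes external to $E$ to get $E\cup C'$ with $C'$ smooth of genus $g-2$, and then \emph{specialize} to a ring of $g-1$ elliptic bridges. The ring has $\Aut(G)$ acting transitively on vertices, so once one elliptic bridge lies in $\Z$, all do, and axiom (1) is violated. The passage through an elliptic bridge (two attaching points, so it survives as a separate component after smoothing) and the transitive symmetry of the ring are precisely the ideas your proposal is missing.
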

\begin{proof}
For (1), see Example \ref{E:CrazyAssignment}. For (2), it suffices to prove that an extremal assignment $\Z$ over $\SM_{g}$ can never pick out a genus two subcurve. If $C^{s}$ is an unmarked stable curve and $Z \subset \Z(C^{s})$ is a connected component of genus two, we obtain a contradiction as follows: first, specialize $C^{s}$ so that $Z$ splits off an elliptic bridge. Second, smooth all nodes external to the elliptic bridge. Finally, specialize to a ring of elliptic bridges (See Figure \ref{F:NoGenus2Tail}). Applying axiom 3 of Definition \ref{D:Assignment} to this sequence of specializations, we conclude that if $D^{s}$ is a ring of $g-1$ elliptic bridges, then $\Z(D^{s}) \subset D^{s}$ is non-empty. If $G$ is the dual graph of $D^{s}$, then $\Aut(G)$ acts transitively on the vertices of $G$, so axiom 2 implies that $\Z(C^{s})=C^{s}$. But this contradicts axiom 1. We conclude that an extremal assignment over $\SM_{g}$ can never pick out a genus two subcurve.
\end{proof}

\begin{figure}
\scalebox{.50}{\includegraphics{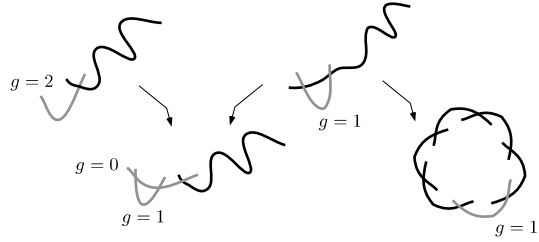}}
\caption{Sequence of specializations showing that any extremal assignment which picks out a genus two tail must also pick out an elliptic bridge within a ring of elliptic bridges.}\label{F:NoGenus2Tail}
\end{figure}

\subsubsection{Modular Compactifications of $M_{0,n}$}\label{SS:GenusZero}
In the previous section, we saw that $\Z$-stability does not give an entirely satisfactory theory of stability conditions for curves. In this section, we will see that $\Z$-stability \emph{does} give a satisfactory theory of stability conditions when $g=0$. In particular, we will see that every modular compactification of $M_{0,n}$ is automatically stable, so Theorem \ref{T:Main} actually classifies \emph{all} modular compactifications of $M_{0,n}$.

The starting point of our analysis is the following classification of genus zero singularities. It turns out that any genus zero singularity with $m$ branches is analytically isomorphic to the union of $m$ coordinate axes in $\mathbb{A}^{m}$, and we call such singularities \emph{rational $m$-fold points}.

\begin{definition}[Rational $m$-fold point]
Let $C$ be a curve over an algebraically closed field $k$. We say that $p \in C$ is a \emph{rational $m$-fold point} if
$$\hat{O}_{C,p} \simeq k[[x_1, \ldots, x_m]]/(x_ix_j: 1 \leq i<j \leq m).
$$
\end{definition}

\begin{lemma}\label{P:GenusZeroSingularities}
\begin{itemize}
\item[]
\end{itemize}
\begin{enumerate}
\item If $p \in C$ is a singularity with genus zero and $m$ branches, then $p$ is a rational $m$-fold point.
\item The rational $m$-fold point is smoothable.
\end{enumerate}
\end{lemma}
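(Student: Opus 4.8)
The plan is to treat the two parts separately: part (1) is a local computation pinning down $\hat{\O}_{C,p}$ via the $\delta$-invariant, and part (2) is a global smoothing argument.

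For (1), everything happens in the complete local ring, which I would identify explicitly. Write $R=\hat{\O}_{C,p}$. Since $C$ is a reduced curve over the algebraically closed field $k$ (hence $\O_{C,p}$ is excellent), $R$ is a reduced, one-dimensional, complete local $k$-algebra with residue field $k$, and its normalization is $\tilde R\cong\prod_{i=1}^{m}k[[t_i]]$, the product running over the $m=m(p)$ branches. The hypothesis that $p$ has genus $0$ is precisely $\delta(p)=\dim_{k}(\tilde R/R)=m-1$. I would then show that $R$ coincides with the subring
\[
S:=\bigl\{\,(f_1,\dots,f_m)\in{\textstyle\prod_{i}}\,k[[t_i]]\ :\ f_1(0)=\cdots=f_m(0)\,\bigr\}.
\]
The inclusion $R\subseteq S$ is forced by locality: for $r\in R$ with image $(f_1,\dots,f_m)$, the composite $R\hookrightarrow\tilde R\to k[[t_i]]\to k$ is a surjective $k$-algebra map, hence is the quotient $R\to R/\mathfrak m_R=k$, so $f_i(0)$ equals the residue of $r$ and is independent of $i$. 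A direct count gives $\dim_{k}(\tilde R/S)=m-1$ as well (the quotient is $k^m$ modulo the small diagonal), so from $R\subseteq S\subseteq\tilde R$ and $\dim_{k}(\tilde R/R)=\dim_{k}(\tilde R/S)<\infty$ I conclude $R=S$. Finally, the continuous $k$-algebra map $k[[x_1,\dots,x_m]]\to\prod_i k[[t_i]]$ sending $x_i$ to the tuple with $t_i$ in the $i$-th slot and $0$ elsewhere has image exactly $S$ and kernel exactly $(x_ix_j:1\le i<j\le m)$ — a power series lies in the kernel iff every monomial occurring in it involves at least two variables — which identifies $\hat{\O}_{C,p}\cong k[[x_1,\dots,x_m]]/(x_ix_j:1\le i<j\le m)$, i.e. $p$ is a rational $m$-fold point.

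For (2), since smoothability depends only on the complete local ring, it suffices to put the rational $m$-fold point on a single projective curve and smooth that curve globally. After a linear change of coordinates the singularity is the union of the $m$ coordinate axes of $\mathbb{A}^m\subset\P^m$, so I would take $C_0\subset\P^m$ to be the union of $m$ general lines through a point: a connected curve of arithmetic genus $0$ and degree $m$ whose only singularity is a rational $m$-fold point. The claim reduces to showing that $C_0$ lies in the closure of the locus of rational normal curves of degree $m$ in $\mathrm{Hilb}(\P^m)$: a general nearby point then parametrizes a smooth rational curve, and restricting the universal family to a general one-parameter disc through $[C_0]$ yields the desired smoothing, and hence a smoothing of the singularity. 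For $m=2$ this is the classical fact that a pair of distinct lines is a limit of smooth conics. In general one can realize $C_0$ as a specialization of a chain of $m$ lines (a nodal genus-$0$ curve, obtained by letting the $m-1$ nodes of the chain collide at one point while keeping the lines in linearly general position), and invoke the standard fact that a rational normal curve of degree $m$ degenerates to such a chain; patching these two specializations into one flat family and restricting to a general disc produces the smoothing. Alternatively one can argue directly by deformation theory: verify $H^1(C_0,\mathcal N_{C_0/\P^m})=0$, so that $\mathrm{Hilb}(\P^m)$ is smooth at $[C_0]$, and then match dimensions to place $[C_0]$ on the component whose general member is a rational normal curve.

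The routine parts are all of (1) together with the $\delta$-bookkeeping. The main obstacle is the smoothing step in (2): one must genuinely establish that $C_0$ — equivalently, the rational $m$-fold point as a local singularity — is smoothable, i.e. that it lies on the ``smoothing'' component of the Hilbert scheme rather than being obstructed. Carrying out the cohomology vanishing $H^1(C_0,\mathcal N_{C_0/\P^m})=0$ (which simultaneously ensures that a local smoothing globalizes) or making the chain-of-lines degeneration fully explicit is where the real work lies.
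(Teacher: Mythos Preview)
Your argument for (1) is correct and is essentially the elementary computation the paper has in mind (the paper simply calls (1) ``elementary'' and cites Stevens; the deleted appendix of the paper contains precisely the argument you wrote, identifying $R$ inside $\tilde R=\prod_i k[[t_i]]$ via the $\delta$-count).

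For (2), your strategy is different from the paper's and, as you yourself flag, not actually completed. You propose to place $C_0$ (the $m$ concurrent lines) in $\P^m$ and argue it lies in the closure of the rational normal curve locus in the Hilbert scheme, either by a two-step degeneration through a chain of lines or by computing $H^1(C_0,\mathcal N_{C_0/\P^m})$ and matching dimensions. Both are plausible routes, but you leave the hard step --- the explicit specialization or the cohomology vanishing plus component identification --- undone.

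The paper's argument is a one-line explicit smoothing that bypasses all of this: take the cone $X\subset\P^{m+1}$ over a rational normal curve of degree $m$ in $\P^m$, and consider a pencil of hyperplane sections. A general hyperplane cuts $X$ in a smooth rational normal curve of degree $m$, while a hyperplane through the vertex cuts $X$ in the union of $m$ rulings meeting only at the vertex --- precisely the rational $m$-fold point. So a pencil moving from a general hyperplane to one through the vertex is already a flat family smoothing the singularity. This is more direct than either of your proposed approaches: no Hilbert scheme bookkeeping, no normal-bundle computation, no intermediate chain-of-lines.
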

\begin{proof}
(1) is elementary. For (2), one can realize a smoothing of the rational $m$-fold point by taking a pencil of hyperplane sections of the cone over the rational normal curve of degree $m$. Both statements are proved in \cite{Stevens}.
\end{proof}

\begin{corollary}\label{C:GenusZeroSmoothability}
Every reduced connected curve of arithmetic genus zero is smoothable, i.e. $\U_{0,n} = \V_{0,n}$.
\end{corollary}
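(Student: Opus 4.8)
The plan is to reduce the smoothability of an arbitrary reduced connected genus-zero curve $C$ to the smoothability of its singularities, which is already guaranteed by Lemma \ref{P:GenusZeroSingularities}(2). The first observation is structural: since $p_a(C)=0$, the dual graph of $C$ is a tree, and by Lemma \ref{P:GenusZeroSingularities}(1) every singular point $p\in C$ is a rational $m(p)$-fold point, whose normalization separates all $m(p)$ branches. Thus the normalization $\tilde{C}$ is a disjoint union of smooth rational curves $\P^1$, glued along the rational $m$-fold points in a treelike pattern. One should first record this picture carefully — in particular that contracting any one component of $\tilde{C}$ to a point still yields a genus-zero curve — since it is the combinatorial backbone of the induction.

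The main step is a deformation-theoretic gluing argument. I would argue by induction on the number of irreducible components of $C$, or equivalently on $\dim H^0(C,\O_C)$-type complexity; the base case is $C=\P^1$, which is rigid and smooth. For the inductive step, pick a component $E\cong\P^1$ of $\tilde{C}$ that is a ``leaf'' of the tree, i.e. meets the rest of $C$ at a single rational $m$-fold point $p$ (such a leaf exists because the dual graph is a tree). Let $C'$ be the genus-zero curve obtained by detaching $E$, so $C'$ has fewer components and inherits the rational $m$-fold (now $(m-1)$-fold, or smooth) point structure. By induction $C'$ admits a smoothing $\C'\to\Spec R$ over a DVR (or a one-parameter disc), and one wants to attach a family of $\P^1$'s back along a section passing through the chosen branch of $p$, producing a smoothing of $C$. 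Concretely: the local smoothing of the rational $m$-fold point from Lemma \ref{P:GenusZeroSingularities}(2) — realized as a pencil of hyperplane sections of the cone over the rational normal curve of degree $m$ — gives a local model in which the $m$ branches come together; one glues this local smoothing to the global smoothing $\C'$ of the complement and to the trivial family $E\times\Spec R$, using that $H^1$ of the relevant normal/tangent complex vanishes for genus-zero (treelike) curves so there is no obstruction to globalizing the local deformation. An equivalent and perhaps cleaner packaging: realize $C$ itself as a hyperplane section of a higher-dimensional rational normal scroll / cone built by successively coning, and move within the linear system to smooth it.

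The hard part will be the gluing/patching step — passing from smoothings of the pieces ($C'$, the local singularity, and the leaf $E$) to a genuine flat proper smoothing of $C$ with compatible sections. The key input that makes it go through is the vanishing of the relevant obstruction group, which for a genus-zero curve follows because every component is $\P^1$ and the dual graph is a tree (so $H^1$ of $\O_C$-modules supported appropriately vanishes, and $T^1$ of the curve is generated by the local $T^1$'s of the singularities with no global obstruction). If patching abstract deformations proves delicate, the fallback — and probably the route actually taken in \cite{Stevens} — is the explicit projective construction: exhibit $C$ inside a rational normal scroll or iterated cone and smooth it by deforming within a linear system, which sidesteps abstract obstruction calculus entirely. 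Either way, once $C$ is smoothable the identity $\U_{0,n}=\V_{0,n}$ is immediate, since $\V_{0,n}$ was defined as the component containing $\M_{0,n}$ and $\U_{0,n}$ is irreducible (as asserted in the abstract), so every point of $\U_{0,n}$ is a limit of smooth curves.
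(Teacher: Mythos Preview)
Your approach is correct in outline but takes a much longer route than the paper. The paper's proof is two lines: it invokes the general fact (I.6.10 in \cite{Kollar}) that a complete reduced curve is smoothable if and only if each of its singularities is smoothable, and then applies Lemma~\ref{P:GenusZeroSingularities}, which already gives that every singularity on a genus-zero curve is a smoothable rational $m$-fold point. Your inductive gluing argument --- detaching a leaf, smoothing the complement, and patching local and global smoothings using $H^1$-vanishing --- is essentially a by-hand reproof of the Koll\'ar statement in this special case. It would work, but you are redoing a standard result rather than citing it.

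One small point: your final paragraph appeals to the irreducibility of $\U_{0,n}$ ``as asserted in the abstract'' to conclude $\U_{0,n}=\V_{0,n}$. That is circular: the irreducibility is a consequence of the smoothability you are proving, not an independent input. The correct logic is that once every curve in $\U_{0,n}$ is smoothable, every point lies in the closure of $\M_{0,n}$, hence in $\V_{0,n}$; since $\V_{0,n}\subset\U_{0,n}$ by definition, equality follows.
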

\begin{proof}
A complete reduced curve is smoothable iff its singularities are smoothable (I.6.10, \cite{Kollar}). The only singularities on a reduced curve of arithmetic genus zero curve are rational $m$-fold points, so all such curves are smoothable.
\end{proof}

Next, we study automorphisms of genus zero singular curves. If $(C, \pn)$ is an $n$-pointed curve of arithmetic genus zero over an algebraically closed field $k$, it is convenient to define $$\Aut^{0}_k(C, \pn) \subset \Aut_k(C, \pn)$$ to be the subgroup of the automorphisms which fix each component and each singular point of $C$. Then we have

\begin{lemma}\label{L:Aut}
Let $(C, \pn)$ be an $n$-pointed curve of arithmetic genus zero, and let $\pi: \tilde{C} \rightarrow C$ be the normalization of $C$. Let $\{\tilde{p}_i\}_{i=1}^{n}$ be the points of $\tilde{C}$ lying above $\pn$, and let $\{\tilde{q}_i\}_{i=1}^{m}$ be the points lying above the singular locus of $C$, and consider $(\tilde{C}, \{\tilde{p}_i\}_{i=1}^{n}, \{\tilde{q}_i\}_{i=1}^{m})$ as an $n+m$ pointed curve. Then the natural map
$$
\Aut^0_k(C, \pn) \hookrightarrow \Aut^0_k(\tilde{C}, \{\tilde{p}_i\}_{i=1}^{n}, \{\tilde{q}_i\}_{i=1}^{m})
$$
is an isomorphism.
\end{lemma}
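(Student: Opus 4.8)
The plan is to construct an explicit inverse to the natural injection. An automorphism $\sigma \in \Aut^0_k(C,\pn)$ fixes every component and every singular point of $C$, hence restricts to an automorphism of the complement of the singular locus. Since normalization is a functorial construction, $\sigma$ lifts uniquely to an automorphism $\tilde\sigma$ of $\tilde C$; moreover $\tilde\sigma$ must fix each $\tilde p_i$ (because $\sigma$ fixes each $p_i$ and the lift is compatible with $\pi$) and must fix each $\tilde q_j$ individually — not merely permute the fiber $\pi^{-1}(q)$ over a given singular point $q$ — because $\sigma$ already fixes each branch of $C$ at $q$, and the branches at $q$ correspond bijectively to the points of $\tilde C$ lying over $q$. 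Finally $\tilde\sigma$ fixes every component of $\tilde C$ since $\sigma$ fixes every component of $C$. Thus $\sigma \mapsto \tilde\sigma$ gives a well-defined map $\Aut^0_k(C,\pn) \to \Aut^0_k(\tilde C, \{\tilde p_i\}_{i=1}^n, \{\tilde q_i\}_{i=1}^m)$, which is clearly a group homomorphism and is visibly a one-sided inverse to the map in the statement (composing $\sigma \mapsto \tilde\sigma$ with the injection recovers the original lift, hence $\sigma$). It remains to check it is also the other-sided inverse, i.e. that every $\tilde\sigma \in \Aut^0_k(\tilde C, \{\tilde p_i\}, \{\tilde q_i\})$ descends to $C$.

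For the descent, I would argue as follows. The curve $C$ is obtained from $(\tilde C, \{\tilde q_i\}_{i=1}^m)$ by gluing: each singular point $p \in C$ has genus zero, so by Lemma \ref{P:GenusZeroSingularities} it is a rational $m(p)$-fold point, whose complete local ring $k[[x_1,\dots,x_{m(p)}]]/(x_ix_j: i<j)$ sits inside $\prod k[[x_\ell]]$ as the subring of tuples of power series with a common constant term. In other words, $\O_C$ is recovered from $\pi_*\O_{\tilde C}$ as the subsheaf of sections whose values agree at the points of $\tilde C$ lying over each singular point of $C$ — there is no additional ``attaching data'' (no nonzero tangent-identification parameter) to match, precisely because the rational $m$-fold point is the ``maximally split'' glueing. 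An automorphism $\tilde\sigma$ of $\tilde C$ that fixes each $\tilde q_j$ therefore preserves this gluing condition, so it carries $\O_C \subset \pi_*\O_{\tilde C}$ to itself and hence descends to an automorphism $\sigma$ of $C$. Since $\tilde\sigma$ fixes each $\tilde p_i$, the descended $\sigma$ fixes each $p_i$; since $\tilde\sigma$ fixes each component of $\tilde C$, $\sigma$ fixes each component of $C$ and each singular point, so $\sigma \in \Aut^0_k(C,\pn)$. Uniqueness of the descent is immediate because $\pi$ is birational (an isomorphism over the dense open complement of the singular locus). This exhibits the two-sided inverse and finishes the proof.

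The main obstacle — really the only nontrivial input — is the second step: verifying that for a genus zero singularity there is no nontrivial attaching data, so that fixing the points of the fiber is enough to guarantee descent. This is exactly where the classification of genus zero singularities as rational $m$-fold points (Lemma \ref{P:GenusZeroSingularities}(1)) does the work: the complete local ring is literally determined by the fiber $\pi^{-1}(p)$ as a set of points on $\tilde C$, with no continuous moduli, in contrast to a tacnode. One should present this cleanly either by the pushout/conductor-square description of $C$ in terms of $\tilde C$ and the reduced gluing locus, or by a direct computation in complete local rings at each singular point; in either case it is routine once the local model is in hand. The rest of the argument is formal functoriality of normalization plus bookkeeping about which points and components are fixed.
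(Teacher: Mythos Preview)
Your proof is correct and follows essentially the same approach as the paper: both argue that descent of $\tilde\sigma$ amounts to preserving the subsheaf $\pi^*\O_C \subset \O_{\tilde C}$, and both use the classification of genus-zero singularities as rational $m$-fold points to identify this subsheaf as exactly the functions taking a common value on each fiber $\pi^{-1}(q)$, which is automatically preserved once the points $\tilde q_j$ are fixed. Your write-up is more explicit about why each $\tilde q_j$ is fixed individually (via the branch correspondence) and about the absence of attaching data, but the core argument is the same.
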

\begin{proof}
Clearly, an automorphism $\phi \in \Aut^0_k(C, \pn)$ induces an automorphism $\tilde{\phi} \in  \Aut^0_k(\tilde{C}, \{\tilde{p}_i\}_{i=1}^{n}, \{\tilde{q}_i\}_{i=1}^{m})$. Conversely, an automorphism $\tilde{\phi} \in  \Aut^0_k(\tilde{C}, \{\tilde{p}_i\}_{i=1}^{n}, \{\tilde{q}_i\}_{i=1}^{m})$ descends to an automorphism of $(C, \pn)$ iff the natural map
$$
\tilde{\phi}^*:\O_{\tilde{C}} \simeq \O_{\tilde{C}}
$$
preserves the subsheaf of functions pulled-back from $C$, i.e. if $\phi^*(\pi^*\O_{C})=\pi^*\O_{C}$. Since the only singularities of $C$ are rational $m$-fold points, $\pi^*\O_{C} \subset \O_{\tilde{C}}$ is simply the $k$-subalgebra generated by all functions vanishing at $\{\tilde{q}_i\}_{i=1}^{m}$, and this is clearly preserved.
\end{proof}

\begin{corollary}\label{C:GenusZeroStability} Every modular compactification of $M_{0,n}$ is stable.
\end{corollary}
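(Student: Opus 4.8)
The plan is to show that a genus-zero curve which fails to be prestable carries infinitely many automorphisms, and that this is incompatible with lying in a stack proper over $\Spec \mathbb{Z}$ --- exactly the obstruction recorded in the Remark following the definition of semistable compactifications. So let $\X$ be a modular compactification of $M_{0,n}$, and let $[C, \pn] \in \X$ be an arbitrary geometric point over an algebraically closed field $k$; I must show $(C, \pn)$ is prestable.

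First I would record the structural input special to genus zero. Since $C$ is connected and reduced of arithmetic genus zero and, by Lemma \ref{P:GenusZeroSingularities}, has only rational $m$-fold points as singularities, a short count with $\delta$-invariants --- using that $C$ is connected and that a rational $m$-fold point has $\delta$-invariant $m-1$ and $m$ branches --- forces $\sum_i g_i = 0$, where $g_i$ is the genus of the $i$-th component of the normalization $\tilde C$. Thus every irreducible component of $\tilde C$ is a smooth rational curve, and for a genus-zero curve the prestability condition reads simply: every component of $\tilde C$ carries at least three distinguished points.

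Next I would suppose, toward a contradiction, that some component $E \subseteq \tilde C$ carries at most two distinguished points. Since $E \cong \mathbb{P}^1_k$, the subgroup of $\Aut_k(E) = \mathrm{PGL}_{2,k}$ fixing these at most two points is positive-dimensional --- it contains a copy of $\mathbb{G}_{m,k}$. Extending such automorphisms by the identity on the other components of $\tilde C$ and invoking Lemma \ref{L:Aut}, I obtain a positive-dimensional affine subgroup scheme of $\Aut^0_k(C, \pn) \subseteq \Aut_k(C, \pn)$; in particular $\Aut_k(C, \pn)$ is not proper over $k$. On the other hand, $\X$ is proper, hence separated, over $\Spec \mathbb{Z}$, so its diagonal is proper and the inertia stack $I_\X \to \X$, being a base change of that diagonal, is proper; its fiber over $[C, \pn]$ is precisely $\Aut_k(C, \pn)$, which is therefore proper over $k$ --- a contradiction. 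Hence every component of $\tilde C$ has at least three distinguished points, so $(C, \pn)$ is prestable, and since $[C, \pn]$ was arbitrary, $\X$ is stable.

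I expect the only step requiring genuine care to be the first one: checking that arithmetic genus zero really forces \emph{every} component of $\tilde C$ to be rational, rather than merely bounding $\sum_i g_i$. This is where the connectedness of $C$ enters, together with the exact value $\delta = m-1$ of the $\delta$-invariant of a rational $m$-fold point; once this is in hand, the remainder is a mechanical combination of Lemma \ref{L:Aut} with the standard fact that a proper algebraic stack has proper inertia.
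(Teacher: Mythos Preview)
Your proof is correct and follows essentially the same approach as the paper: both use Lemma \ref{L:Aut} to show that a component of $\tilde{C}$ with at most two distinguished points yields a non-proper subgroup of $\Aut_k(C,\pn)$, contradicting the properness of $\X$. Your version is more explicit about why every component of $\tilde{C}$ is rational and about the inertia-stack justification for properness of the automorphism group, but the skeleton is identical to the paper's argument.
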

\begin{proof}
Let $\X$ be a modular compactification of $M_{0,n}$, and let $[C,\{p_i\}_{i=1}^{n}] \in \X$ be a geometric point over an algebraically closed field $k$. Since $\X$ is proper over $\Spec \mathbb{Z}$, the automorphism group $\Aut_{k}(C,\{p_i\}_{i=1}^{n})$ must be proper over $k$.

If $\tilde{C}$ contains an irreducible component with one or two distinguished points, then Lemma \ref{L:Aut} implies $\Aut^0_{k}(C,\{p_i\}_{i=1}^{n})$ contains a factor which is isomorphic to $\Aut_k(\P^{1},\infty)$ or $\Aut_k(\P^{1},0,\infty)$, neither of which is proper. We conclude that each irreducible component of $\tilde{C}$ must have at least three distinguished points.
\end{proof}

In light of these remarks, we obtain the following corollary of our main result.
\begin{theorem}\label{T:GenusZero}
\begin{itemize}
\item[]
\item[(1)] If $\X \subset \U_{0,n}$ is any open proper substack, then $\X=\SM_{0,n}(\Z)$ for some extremal assignment $\Z$.
\item[(2)] $\X$ is an algebraic space.
\end{itemize}
\end{theorem}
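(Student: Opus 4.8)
Part (1) will follow formally from Theorem~\ref{T:Main}(2) together with the genus-zero facts already established. First, by Corollary~\ref{C:GenusZeroSmoothability} we have $\U_{0,n}=\V_{0,n}$, so an open substack $\X\subseteq\U_{0,n}$ proper over $\Spec\mathbb{Z}$ is the same datum as an open substack of $\V_{0,n}$ proper over $\Spec\mathbb{Z}$. I would then observe that such an $\X$ is a modular compactification of $\M_{0,n}$: it is open and proper by hypothesis, and $\M_{0,n}\subseteq\X$ because $\X$, being a nonempty open substack of the irreducible stack $\V_{0,n}$, is dense, so $\X\cap\M_{0,n}$ is dense open in $\M_{0,n}$, and properness of $\X$ together with separatedness of $\M_{0,n}$ forces $\X$ to contain all of $\M_{0,n}$ (the valuative-criterion argument common to every modular compactification). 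By Corollary~\ref{C:GenusZeroStability} any modular compactification of $\M_{0,n}$ is stable, so Theorem~\ref{T:Main}(2) applies and yields $\X=\SM_{0,n}(\Z)$ for an extremal assignment $\Z$.

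For part (2) it suffices to show that for every geometric point $[C,\pn]\in\X$ over an algebraically closed field $k$ the automorphism group scheme $\Aut_k(C,\pn)$ is trivial; then the inertia of $\X$ is trivial and $\X$ is an algebraic space. Since $p_a(C)=0$, Lemma~\ref{P:GenusZeroSingularities} and the genus formula show that the normalization $\tilde{C}$ is a disjoint union of copies of $\P^1$ whose components form a tree, with a rational $m$-fold point at each node; and since $\X$ is stable, $[C,\pn]$ is prestable, so each component of $\tilde{C}$ carries at least three distinct distinguished points. Let $\Aut^0_k(C,\pn)\subseteq\Aut_k(C,\pn)$ denote the subgroup fixing every component and every singular point of $C$. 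By Lemma~\ref{L:Aut}, $\Aut^0_k(C,\pn)\cong\Aut^0_k(\tilde{C},\{\tilde{p}_i\},\{\tilde{q}_i\})$, which is a product, over the components of $\tilde{C}$, of stabilizers in $\Aut(\P^1)$ of three or more distinct points; each such stabilizer is the trivial group scheme, so $\Aut^0_k(C,\pn)$ is trivial. The quotient $\Aut_k(C,\pn)=\Aut_k(C,\pn)/\Aut^0_k(C,\pn)$ injects into the group of automorphisms of the dual tree of $C$ that preserve the genus labels and, since an automorphism of $(C,\pn)$ fixes each section $\sigma_i$, the labels of the marked points. I would conclude with the combinatorial observation that every leaf of the dual tree is a component with a single node, hence carries at least two marked points by prestability; as the $n$ labels are distinct, the leaves have pairwise disjoint, nonempty label sets, so a label-preserving automorphism of the tree fixes every leaf, and a tree automorphism fixing every leaf is the identity. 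Hence $\Aut_k(C,\pn)$ is trivial for every geometric point, and $\X$ is an algebraic space.

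The substance of the argument, and the step I expect to require the most care, is this last combinatorial observation: it is precisely the ``global obstruction'' arising from symmetry of dual graphs highlighted in Corollary~\ref{C:2}, which in genus zero \emph{vanishes} exactly because the $n$ marked points are distinctly labeled and, by prestability, reach every extremity of the tree. The other point I would want to treat with care is the inclusion $\M_{0,n}\subseteq\X$ used in part (1): since $\V_{0,n}$ is non-separated, the valuative-criterion argument has to be phrased carefully, or else invoked from the general treatment of modular compactifications earlier in the paper.
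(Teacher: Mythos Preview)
Your approach is essentially the paper's: part~(1) is deduced from Corollary~\ref{C:GenusZeroSmoothability}, Corollary~\ref{C:GenusZeroStability}, and Theorem~\ref{T:Main}(2), and part~(2) from triviality of $\Aut^0$ via Lemma~\ref{L:Aut} followed by a tree argument that the paper leaves as ``an elementary combinatorial consequence.'' Your tree argument (leaves carry $\geq 2$ distinctly-labelled smooth marked points, hence are fixed; a tree automorphism fixing all leaves is the identity) is correct and is exactly what the paper has in mind.

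There is one unnecessary detour you should drop. In part~(1) you work to establish $\M_{0,n}\subseteq\X$, but the paper's Definition of a modular compactification does \emph{not} require this containment: a modular compactification is simply an open substack of $\V_{g,n}$ that is proper over $\Spec\mathbb{Z}$. Once $\U_{0,n}=\V_{0,n}$, an open proper substack $\X\subset\U_{0,n}$ is already a modular compactification by definition, so Corollary~\ref{C:GenusZeroStability} and Theorem~\ref{T:Main}(2) apply immediately. Your valuative-criterion sketch for the inclusion is moreover shaky, as you yourself flag: $\V_{0,n}$ is non-separated, so the limit in $\X$ need not agree with the limit in $\M_{0,n}$ without further argument. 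Simply delete that paragraph and the final caveat about it.
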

\begin{proof}
(1) follows from Corollary \ref{C:GenusZeroStability}, Corollary \ref{C:GenusZeroSmoothability}, and Theorem \ref{T:Main}. For (2), it suffices to show that if $[C, \pn] \in \SMzero(\Z)$ is any geometric point, then $\Aut_{k}(C, \pn)$ is trivial. Since every component of $\tilde{C}$ has  at least three distinguished points, we have $\Aut_{k}^{0}(C,\pn)=\{0\}$, so we only need to see that every automorphism of a prestable genus zero curve fixes the irreducible components and singular points. This is an elementary combinatorial consequence of the fact that every component of $(C,\pn)$ has at least three distinguished points.
\end{proof}

\subsection{Outline of proof}\label{S:Outline}
In this section, we give a detailed outline of the proof of Theorem \ref{T:Main}, which occupies sections 2-4 of this paper.

In Section \ref{S:Preliminaries}, we establish several fundamental lemmas, which are used repeatedly throughout. In Section \ref{S:ExtendingFamilies}, we prove that a birational map between two generically-smooth families of curves over a normal base is automatically Stein (Lemma \ref{L:Normality}). We also prove that, after an alteration of the base, one can birationally dominate any family of prestable curves by a family of stable curves (Lemma \ref{L:ExtendingCurves}). Taken together, these lemmas allow us to analyze deformations and specializations of prestable curves by studying the deformations and specializations of the stable curves lying over them. 

In Section \ref{S:BirationalMaps}, we define a \emph{contraction morphism of curves} to be a surjective morphism with connected fibers, which contracts subcurves of genus $g$ to singularities of genus $g$. The motivation for this definition is Lemma \ref{L:BirationalBaseChange}, which says that a birational contraction $\C_{1} \rightarrow \C_{2}$ between two irreducible families of generically smooth curves induces a contraction of curves on each geometric fiber. Finally, in Section \ref{S:ZStability}, we define the stability condition associated to an extremal assignment $\Z$: An $n$-pointed curve is \emph{$\Z$-stable} if there exists a stable curve $(C^s, \spn)$ and a contraction $\phi:(C^s, \spn) \rightarrow (C, \pn)$ with $\Exc(\phi)=\Z(C^s)$. An important consequence of the axioms for an extremal assignment is that the existence of a single contraction $\phi:(C^s, \spn) \rightarrow (C, \pn)$ with $\Exc(\phi)=\Z(C^s)$ implies that $\Exc(\phi)=\Z(C^s)$ for \emph{any} contraction from a stable curve (Corollary \ref{C:Independence}).

In Section \ref{S:Openness}, we prove that the locus of $\Z$-stable curves is open in $\V_{g,n}$, the main component of the moduli stack of all curves. Given a generically-smooth family of curves $(\C \rightarrow T, \sigman)$ over an irreducible base $T$, we must show that the set
\[
S:=\{t \in T \, | \, (\C_{\overline{t}}, \{\sigma_i(\overline{t})\}_{i=1}^{n})\text{ is $\Z$-stable} \}
\]
is open in $T$. It is sufficient to prove that $i^{-1}(S)$ is open after any proper surjective base-change $i:\tilde{T} \rightarrow T$. Thus, using the results of Section 2.1, we may assume there exists a stable curve over $T$ birationally dominating $\C$, i.e. we have a diagram
\[
\xymatrix{
\C^{s} \ar[rr]^{\phi} \ar[dr]^{\pi^s} && \C \ar[dl]_{\pi}\\
&T  \ar@/^1pc/[lu]^{\{\sigma^s_i\}_{i=1}^{n}} \ar@/_1pc/[ru]_{\sigman}&
}
\]
By Section 2.2, the fibers of $\phi$ are contractions of curves. Thus, the fiber $\pi^{-1}(t)$ is $\Z$-stable if and only if $\Exc(\phi_t)=\Z(C^{s}_t)$. Thus, it suffices to prove that $$\{t \in T \, |\, \Exc(\phi_{\overline{t}})=\Z(C_{\overline{t}})\}$$
is open in $T$. This is an immediate consequence of axiom 3 in the definition of an extremal assignment.

In Section \ref{S:Properness}, we prove that $\Z$-stable curves satisfy the unique limit property. To prove that $\Z$-stable limits exist, we use the classical stable reduction theorem and Artin's criterion for the contractibility of 1-cycles on a surface. Given a family of smooth curves over the punctured disc $\Delta^*$, we may complete it to a stable curve $\C^{s} \rightarrow \Delta$. Using Artin's criterion, we construct a birational morphism $\phi:\C^{s} \rightarrow \C$ with $\Exc(\phi)=\Z(C^{s})$, where $C^{s} \subset \C^{s}$ is the special fiber. The restriction of $\phi$ to the special fiber induces a contraction of curves $\phi_0:C^{s} \rightarrow C$ with $\Exc(\phi_0)=\Z(C^{s})$. Thus, $C$ is the desired $\Z$-stable limit.

To prove that $\Z$-stable limits are unique, we show that if $\C_{1} \rightarrow \Delta$ and $\C_{2} \rightarrow \Delta$ are two $\Z$-stable families with smooth isomorphic generic fiber, then there exists a stable curve $\C^{s} \rightarrow \Delta$ and birational maps
\[
\xymatrix{
&\C \ar[dr]^{\phi_2} \ar[ld]_{\phi_1}&\\
\C_1&&\C_2\\
}
\]
Since $\phi_1$ and $\phi_2$ induce contraction morphisms on the special fiber, the hypothesis that $\C_{1}$ and $\C_{2}$ are $\Z$-stable implies that $\Exc(\phi_1)=\Z(C^{s})$ and $\Exc(\phi_2)=\Z(C^s)$. In particular, $\Exc(\phi_1)=\Exc(\phi_2)$. Since $\phi_1$ and $\phi_2$ are Stein morphisms,  we conclude that the rational map $\C_1 \dashrightarrow \C_2$ extends to an isomorphism.

Section \ref{S:Classification} is devoted to the proof that any stable modular compactification $\X \subset \V_{g,n}$ takes the form $\X=\SM_{g,n}(\Z)$ for some extremal assignment $\Z$ over $\SM_{g,n}$. Given a stable modular compactification $\X \subset \SM_{g,n}(\Z)$, Lemma \ref{L:Diagram} produces a diagram 
\[
\xymatrix{
\C^{s} \ar[dr]^{\pi^{s}} \ar[rr]^{\phi}&&\C \ar[dl]_{\pi}\\
&T \ar[dl]_{p} \ar[dr]^{q} \ar@/_1pc/[ru]_{\sigman}  \ar@/^1pc/[lu]^{\{\sigma_i^{s} \}_{i=1}^{n}}&\\
\SM_{g,n}&\U \ar@{^{(}->}[r] \ar@{_{(}->}[l]&\X\\
}
\]
satisfying
\begin{itemize}
\item[(0)] $\U \subset \M_{g,n}$ is an open dense substack,
\item[(1)] $T$ is a normal scheme,
\item[(2)] $p$ and $q$ are representable proper dominant generically-\'{e}tale morphisms,
\item[(3)] $\pi^s$ and $\pi$ are the families induced by $p$ and $q$ respectively,
\item[(4)] $\phi$ is a birational morphism.
\end{itemize}
For any graph $G$,  set $T_{G}:=\M_{G} \times_{\SM_{g,n}} T$, i.e. $T_{G}$ is the locally-closed subscheme over which the fibers of $\pi^{s}$ have dual graph isomorphic to $G$. In addition, for any $t \in T$, let $G_{t}$ denote the dual graph of the fiber $(\pi^{s})^{-1}(t)$. We wish to associate to $\X$ an extremal assignment $\Z$ by setting
\[
\Z(G):=i(\Exc(\phi_t)) \subset G,
\]
for some choice of $t \in T_{G}$ and some choice of isomorphism $i:G_{t} \simeq G$. The key point is to show that the subgraph $\Z(G) \subset G$ does not depend on these choices (Proposition \ref{P:ExceptionalLocus}). We then show that $\Z$ satisfies axioms 1-3 in Definition \ref{D:Assignment}. Axiom 1 is an immediate consequence of the fact that $\phi$ cannot contract an entire fiber of $\pi^s$. Axiom 2 is forced by the separatedness of $\X$. For axiom 3, consider a one-parameter family of stable curves $(\C^{s} \rightarrow \Delta, \sigmans)$ inducing a specialization of dual graphs $G \leadsto G'$. Since $T \rightarrow \SM_{g,n}$ is proper, we may lift the natural map $\Delta \rightarrow \SM_{g,n}$ to $T$, and consider the induced birational morphism of families over $\Delta$:
\[
\xymatrix{
\C^{s}  \ar[dr] \ar[rr]^{\phi} && \C \ar[dl]\\
&\Delta&\\
}
\]
After a finite base-change, we may assume that $\C^{s}=\C_{1} \cup \ldots \cup \C_{m}$, where each $\C_{i} \rightarrow \Delta$ is a flat family of curves with smooth generic fiber, and axiom 3 follows from the fact that $$(\C_{i})_{\bar{\eta}} \in \Exc(\phi_{\overline{\eta}}) \iff (\C_{i})_{0} \in \Exc(\phi_0).$$

Once we have established that $\Z$ is a well-defined extremal assignment, the fact that $\phi$ induces a contraction of curves over each geometric point $t \in T$ implies that each fiber $\pi^{-1}(t)$ is $\Z$-stable for this assignment. Since $T \rightarrow \X$ is surjective, we conclude that each geometric point of $\X$ corresponds to a $\Z$-stable curve. Thus, the open immersion $\X \hookrightarrow \V_{g,n}$ factors through $\SM_{g,n}(\Z)$. The induced map $\X \hookrightarrow \SM_{g,n}(\Z)$ is proper and dominant, so $\X=\SM_{g,n}(\Z)$ as desired.

\subsection{Notation}\label{S:Notation}
The following notation will be in force throughout: An \textit{$n$-pointed curve} consists of a pair $(C, \pn)$, where $C$ is a reduced connected complete one-dimensional scheme of finite-type over an algebraically closed field, and $\pn$ is an ordered set of $n$ marked points of $C$. The marked points need not be smooth nor distinct. We say that a point on $C$ is \textit{distinguished} if it is marked or singular, and that a point on the normalization $\tilde{C}$ is \textit{distinguished} if it lies above a distinguished point of $C$. A curve $(C,\pn)$ is \textit{prestable} (resp. \textit{presemistable}) if every rational component of $\tilde{C}$ has at least three (resp. two) distinguished points. A curve $(C, \pn)$ is \emph{smooth} if $C$ is smooth and the points $\pn$ are distinct. A curve $(C,\pn)$ is \textit{nodal} if the only singularities of $C$ are ordinary nodes and the marked points of $C$ are smooth and distinct. We say that $(C,\pn)$ is \textit{stable} (resp. \textit{semistable}) if $(C,\pn)$ is nodal and prestable (resp. presemistable).

All these definitions extend to general bases in the usual way: Given a scheme $T$, an \textit{$n$-pointed curve over $T$} consists of a flat proper finitely-presented morphism $\pi: \C \rightarrow T$, together with a collection of sections $\sigman$, such that the geometric fibers are $n$-pointed curves. We say that a curve over $T$ is \textit{prestable}, \textit{presemistable}, \textit{nodal}, \textit{stable}, or \textit{semistable} if the corresponding conditions hold on geometric fibers. Families are typically denoted in script, while geometric fibers are denoted in regular font. Note that we always allow the total space of a family of curves to be an algebraic space. Whenever we consider a morphism $\phi:\C_{1} \rightarrow \C_{2}$ between two families of $n$-pointed curves, say $(\C_{1} \rightarrow T, \sigman)$ and $(\C_{2} \rightarrow T, \taun)$, we always assume that $\phi \circ \sigma_{i}=\tau_{i}$ for all $i$.

$\Delta$ will always denote the spectrum of a discrete valuation ring $R$ with algebraically closed residue field $k$ and field of fractions $K$. We make constant use of the fact that if $x,y \in X$ are two points of any noetherian scheme (or Deligne-Mumford stack) with $y \in \overline{\{ x\}}$, then there exists a map $\Delta \rightarrow X$ sending $\eta \rightarrow x$, $0 \rightarrow y$. When we speak of a finite base-change $\Delta' \rightarrow \Delta$, we mean that $\Delta'$ is the spectrum of a discrete valuation ring $R' \supset R$ with field of fractions $K'$, where $K' \supset K$ is a finite separable extension. We use the notation
\begin{align*}
0&:=\Spec k \rightarrow \Delta,\\
\eta&:=\Spec K \rightarrow \Delta,\\
\overline{\eta}&:=\Spec \overline{K} \rightarrow \Delta,
\end{align*}
for the closed point, generic point, and geometric generic point respectively. Thus, $C_0, \C_{\eta}, C_{\overline{\eta}}\,$ and $C'_0, \C'_{\eta}, C'_{\overline{\eta}}\,$  denote the special fiber, generic fiber, and geometric generic fibers of $\C \rightarrow \Delta$ and $\C' \rightarrow \Delta$ respectively. Sometimes we omit the subscript `0', and denote the special fibers of $\C \rightarrow \Delta$ or $\C' \rightarrow \Delta$ by $C$ or $C'$. Also, we let $\eta_{\X}$ denote the generic point of any irreducible stack or scheme $\X$. 

$\SM_{g,n}$ will denote the moduli stack (over $\Spec \mathbb{Z}$) of $n$-pointed stable curves of genus $g$. Recall that $\SM_{g,n}$ admits a stratification by topological type, i.e.
$$
\SM_{g,n} = \coprod_{G} \M_{G},
$$
where the union runs over all isomorphism classes of dual graphs of $n$-pointed stable curves of genus $g$, and $\M_{G} \subset \SM_{g,n}$ is the locally-closed substack parametrizing stable curves whose dual graph is isomorphic to $G$.

While the language of stacks is employed throughout, everything we do is essentially topological. We use little more than the definition of the Zariski topology for an algebraic stack, and the various valuative criteria for specialization and properness, for which we refer the reader to \cite{LMB}.\\

\textbf{Acknowledgements.}
Joe Harris offered countless suggestions and insights throughout the course of this project, and it is a pleasure to acknowledge his great influence. I would also like to thank Jarod Alper, Maksym Fedorchuk, Jack Hall, Brendan Hassett, Sean Keel, Matthew Simpson, and Fred van der Wyck, for many helpful conversations, by turns mathematical, whimsical, or therapeutic. I am especially indebted to Fred van der Wyck, whose work on moduli of crimping data hovers in the background (carefully hidden!) of several parts of this paper.

\section{Preliminaries on $\Z$-stability} \label{S:Preliminaries}
\subsection{Extending families of prestable curves}\label{S:ExtendingFamilies}
In this section, we present two key lemmas, which will be used repeatedly. Lemma \ref{L:Normality} says that a birational map between two generically-smooth families of curves over a normal base is automatically Stein. Lemma \ref{L:ExtendingCurves} says that, after an alteration of the base, one can dominate any family of prestable curves by a family of stable curves. (Recall that an alteration is proper, surjective, generically-\'{e}tale morphism.) Taken together, these two lemmas allow us to reduce questions about families of prestable curves to questions about stable curves.

\begin{lemma}[Normality of generically-smooth families of curves]\label{L:Normality}
\hfill
\begin{enumerate}
\item Suppose that $S$ is an irreducible, normal, noetherian scheme, and that $\C \rightarrow S$ is a curve over $S$ with smooth generic fiber. Then $\C$ is normal.
\item Suppose that $S$ is an irreducible, normal, noetherian scheme, and that $\C_{1} \rightarrow S$ and $\C_{2} \rightarrow S$ are curves over $S$ with smooth generic fiber. If $\phi:\C_{1} \rightarrow \C_{2}$ is a birational morphism over $S$, then $\phi_{*}\O_{\C_1}=\O_{\C_2}$.
\end{enumerate}
\end{lemma}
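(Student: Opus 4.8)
The plan is to prove (1) by Serre's criterion and to deduce (2) from it. \emph{Part (1).} First I would note that $\C$ is integral: since $\C \rightarrow S$ is flat and $S$ is integral, every associated point of $\C$ lies over the unique associated point $\eta_{S}$ of $S$, hence in the generic fiber $\C_{\eta}$; and $\C_{\eta}$ is smooth and geometrically connected over $K(S)$, hence integral, so $\C$ has a single associated point and is therefore irreducible and reduced. It then suffices to check Serre's conditions $R_{1}$ and $S_{2}$. For $S_{2}$: the geometric fibers of $\C \rightarrow S$ are reduced of pure dimension one, hence Cohen--Macaulay, and $S$ is normal, hence $S_{2}$; by the standard fact that a flat morphism with $S_{k}$ fibers over an $S_{k}$ base has $S_{k}$ total space, $\C$ is $S_{2}$ (one cannot expect more, since $S$ need not be Cohen--Macaulay). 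For $R_{1}$: using the dimension formula $\dim \O_{\C,x} = \dim \O_{S,f(x)} + \dim \O_{\C_{f(x)},x}$ for the flat map $f \colon \C \rightarrow S$, a codimension-one point $x \in \C$ either (a) lies over $\eta_{S}$ and is a closed point of $\C_{\eta}$, or (b) lies over a codimension-one point $s \in S$ and is a generic point of the fiber $\C_{s}$. In case (a), $\O_{\C,x} = \O_{\C_{\eta},x}$ is regular because $\C_{\eta}$ is smooth over the field $K(S)$. In case (b), $\O_{S,s}$ is a discrete valuation ring since $S$ is normal; because $\C_{s}$ is reduced, $\O_{\C_{s},x}$ is a field, so a uniformizer of $\O_{S,s}$ generates the maximal ideal of the one-dimensional local ring $\O_{\C,x}$, which is hence regular. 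Therefore $\C$ satisfies $R_{1}$ and $S_{2}$, so it is normal.

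\emph{Part (2).} By (1) both $\C_{1}$ and $\C_{2}$ are integral and normal, and the birational morphism $\phi$ identifies their function fields. Since $\C_{1} \rightarrow S$ is proper and $\C_{2} \rightarrow S$ is separated, $\phi$ is proper, so $\phi_{*}\O_{\C_{1}}$ is a coherent sheaf of $\O_{\C_{2}}$-algebras; as $\phi$ is dominant and $\C_{1}$ is integral, it is squeezed between $\O_{\C_{2}}$ and the constant sheaf of the common function field $K(\C_{1}) = K(\C_{2})$. Being coherent, $\phi_{*}\O_{\C_{1}}$ is a finite, hence integral, $\O_{\C_{2}}$-subalgebra of $K(\C_{2})$; since $\C_{2}$ is normal, $\O_{\C_{2}}$ is integrally closed in $K(\C_{2})$, and therefore $\phi_{*}\O_{\C_{1}} = \O_{\C_{2}}$. (This is the essential content of Zariski's Main Theorem applied to the proper birational morphism $\phi$.)

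The step I expect to demand the most care is case (b) of the $R_{1}$ verification: it is exactly there that normality of $S$ (to get a discrete valuation ring) and reducedness of the fibers (to pin down the maximal ideal of $\O_{\C,x}$) are both indispensable, which is also why ``smooth generic fiber'' cannot be relaxed to mere generic reducedness. Since all of the above is \'{e}tale-local on $S$ and on the $\C_{i}$, the arguments apply verbatim when the total spaces are algebraic spaces rather than schemes.
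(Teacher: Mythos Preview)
Your proof is correct and follows essentially the same approach as the paper: Serre's criterion for (1), and normality plus a Stein/integral-closure argument for (2). Your verification of $R_{1}$ is in fact more carefully spelled out than the paper's one-line justification (``smooth in the generic fiber and has isolated singularities in every fiber''), and your case (b) makes explicit exactly where normality of $S$ and reducedness of the fibers enter.
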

\begin{proof}
For (1), first observe that since $\C \rightarrow S$ is smooth in the generic fiber and has isolated singularities in every fiber, $\C$ must be regular in codimension one. Furthermore, since $\C \rightarrow S$ is a flat morphism with both base and fibers satisfying Serre's condition $S_2$, $\C$ satisfies $S_{2}$ as well \cite[6.4.2]{EGAIV}. By Serre's criterion, $\C$ is normal.

For (2), $\phi: \C_{1} \rightarrow \C_{2}$ is a proper birational morphism of normal noetherian algebraic spaces. Since a finite birational morphism of normal algebraic spaces is an isomorphism \cite[4.7]{Knutson}, $\phi$ is equal to its own Stein factorization, i.e. $\phi_{*}\O_{\C_1}=\O_{\C_2}$.
\end{proof}
\begin{lemma}[Extending prestable curves to stable curves]\label{L:ExtendingCurves}
\hfill
\begin{itemize}
\item[(1)] Let $T$ be an integral noetherian scheme, and $(\C \rightarrow T, \sigman)$ an $n$-pointed curve over $T$ with smooth generic fiber. There exists an alteration $\tilde{T} \rightarrow T$, and a diagram
\[
\xymatrix{
\C^{s} \ar@{-->}[rr]^{\phi} \ar[dr] && \tilde{\C} \ar[dl]\\
&\tilde{T}  \ar@/^1pc/[lu]^{\{\sigma^s_i\}_{i=1}^{n}} \ar@/_1pc/[ru]_{\{\tilde{\sigma}_i\}_{i=1}^{n}}&
}
\]
where $(\C^{s} \rightarrow \tilde{T}, \{\sigma^s_i\}_{i=1}^{n})$ is a stable curve, $(\tilde{\C} \rightarrow \tilde{T},\{\tilde{\sigma}_i\}_{i=1}^{n})$ is the $n$-pointed curve induced by base-change,  and $\phi$ is a birational map over $\tilde{T}$.
\item[(2)] We may choose the alteration $\tilde{T} \rightarrow T$ so that $\tilde{T}$ is normal, and the open subset $S \subset \tilde{T}$ defined by
$$
S:=\{t \in \tilde{T} \,| \, \text{$\phi$ is regular in a neighborhood of the fiber $\C_{t}^{s}$} \}
$$
contains every geometric point $t \in \tilde{T}$ such that the fiber $(\tilde{\C}_{t}, \{\tilde{\sigma}_i(t)\}_{i=1}^{n})$ is prestable.
\end{itemize}
\end{lemma}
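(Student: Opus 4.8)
The plan is to deduce everything from the properness of $\SM_{g,n}$ and de Jong's theorem on alterations, using Lemma \ref{L:Normality} to control total spaces. For part (1): the generic fibre $(\C_{\eta},\{\sigma_i(\eta)\})$ is a smooth $n$-pointed curve of genus $g$ over $\kappa(\eta_{T})$, hence (throughout we assume $2g-2+n>0$) a Deligne--Mumford stable curve, so it defines a morphism $\eta_{T}\to\M_{g,n}\subset\SM_{g,n}$. Since $\SM_{g,n}$ is proper over $\Spec\mathbb{Z}$, the resulting rational map $T\dashrightarrow\SM_{g,n}$ extends after an alteration to a morphism: choose a projective scheme with a finite surjection onto $\SM_{g,n}$, take the closure of the graph inside the product over $T$, and apply de Jong to the normalization. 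This gives an alteration $\tilde T\to T$ with $\tilde T$ normal (indeed regular, for a suitable choice) and $f\colon\tilde T\to\SM_{g,n}$ restricting over $\eta_{\tilde T}$ to the base change of $\C_{\eta}$. Let $(\C^{s}\to\tilde T,\{\sigma_i^{s}\})$ be the pullback of the universal stable curve along $f$, and $(\tilde\C\to\tilde T,\{\tilde\sigma_i\})$ the base change of $(\C,\{\sigma_i\})$. By Lemma \ref{L:Normality}(1) both $\C^{s}$ and $\tilde\C$ are normal, and being flat and proper over a connected base with connected geometric fibres they are connected, hence integral; as they have the same generic fibre compatibly with the sections, the identity over $\eta_{\tilde T}$ is the desired birational map $\phi\colon\C^{s}\dashrightarrow\tilde\C$.

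For part (2), normality of $\tilde T$ is built into the construction above, so it remains to study $S$. Let $I\subset\C^{s}$ be the indeterminacy locus of $\phi$; it is closed, and has codimension $\geq 2$ since $\tilde\C$ is proper (in particular separated) over $\tilde T$ and $\C^{s}$ is normal. Because $t\in S$ if and only if $I\cap\C^{s}_{t}=\emptyset$, i.e.\ $t\notin\pi^{s}(I)$, and $\pi^{s}(I)$ is closed (as $\pi^{s}$ is proper), $S$ is a dense open subset of $\tilde T$. Suppose, for contradiction, that some geometric point $t$ with $\tilde\C_{t}$ prestable lies over a point $\bar t\in\pi^{s}(I)$. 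Restricting $\tilde T$ to a general curve through $\bar t$ (which keeps $\bar t$ in the image of the indeterminacy locus, so should not make $\phi$ a morphism near the special fibre) and normalizing, we may assume $\tilde T=\Delta$ is the spectrum of a discrete valuation ring; then $\C^{s}$ and $\tilde\C$ are normal surfaces, $\C^{s}\to\Delta$ is a stable curve, $\tilde\C\to\Delta$ has prestable special fibre $\tilde C_{0}$, and $\phi$ is an isomorphism over $\eta$ but not a morphism. Let $\Gamma$ be the closure of the graph of $\phi$ in $\C^{s}\times_{\Delta}\tilde\C$, with projections $p\colon\Gamma\to\C^{s}$ and $q\colon\Gamma\to\tilde\C$. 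If $\phi$ is undefined at $x\in C^{s}_{0}$ then $F:=p^{-1}(x)$ is positive-dimensional (else $p$ is an isomorphism near $x$ by Zariski's main theorem), while $q$ restricts to a closed immersion of $F$ into $\{x\}\times_{\Delta}\tilde\C=\tilde C_{0}$; thus $q(F)$ is a nonempty subcurve of $\tilde C_{0}$, each component of which — being the image of an exceptional $\mathbb{P}^{1}$ of a resolution of the normal surface point $x$ — is rational.

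It remains to produce a component $Y$ of $q(F)$ carrying at most two distinguished points of $\tilde C_{0}$, contradicting prestability; this is the main obstacle. The leverage is that $\C^{s}$ is a \emph{stable} curve: its sections are smooth and pairwise distinct, so at most one passes through $x$, and $C^{s}_{0}$ has at most two branches at $x$, none of them marked when $x$ is a node. One analyses the exceptional configuration of a resolution of $\phi$ over $x$ — a tree of rational curves, to which are attached the strict transforms of the (at most two) branches of $C^{s}_{0}$ through $x$ and of the (at most one) section through $x$ — and follows it under $q$, accounting for the curves that $q$ further contracts to singular points of $\tilde C_{0}$: every distinguished point of $\tilde C_{0}$ on a component $Y=q(F_{0})$ pulls back to a point of the $\mathbb{P}^{1}$ $F_{0}$ lying on the rest of the central fibre or carrying a marked point, and this bookkeeping forces some such $F_{0}$ to yield at most two. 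The surface-geometry analysis in this last step — together with the verification that the reduction to a curve does not artificially resolve $\phi$ — is the delicate point; the remaining ingredients (properness of $\SM_{g,n}$, de Jong's alteration theorem, Lemma \ref{L:Normality}, and Zariski's main theorem) enter only as black boxes.
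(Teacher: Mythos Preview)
Your argument for part~(1) is essentially the paper's: use properness of $\SM_{g,n}$ (via a finite cover by a scheme) to extend the classifying map after an alteration, then pull back the universal curve. Fine.

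For part~(2), however, your approach diverges from the paper's and runs into two real gaps. First, the reduction to a DVR is not justified: when you restrict $\tilde T$ to a curve through $\bar t$ and normalize, the rational map $\phi$ may well become regular --- indeterminacy loci have codimension $\geq 2$ in $\C^{s}$, so slicing the base can certainly resolve them, and you give no mechanism preventing this. Second, even granting the DVR reduction, your endgame (``this bookkeeping forces some such $F_{0}$ to yield at most two'') is only a sketch: the graph closure $\Gamma$ need not be nodal, and the combinatorics of which exceptional components of a resolution of $x\in\C^{s}$ survive under $q$ versus get contracted, and how the marked sections and branches distribute, is exactly the content you need and have not supplied.

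The paper's proof of (2) avoids both difficulties by a different construction. Rather than analyze the indeterminacy of $\phi$ directly, it further alters $\tilde T$ so that there exists a \emph{nodal} curve $\C^{n}\to\tilde T$ with \emph{regular} birational morphisms $\phi_{1}\colon\C^{n}\to\C^{s}$ and $\phi_{2}\colon\C^{n}\to\tilde\C$ (obtained from Chow's lemma, Raynaud--Gruson flattening applied to the graph, and de~Jong's alteration-to-nodal-curves theorem). Now both maps are Stein by Lemma~\ref{L:Normality}, and the question reduces to: if $E\subset\C^{n}_{t}$ is contracted by $\phi_{1}$, is it contracted by $\phi_{2}$? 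By uniqueness of stable reduction, $\Exc(\phi_{1})_{t}$ consists exactly of smooth rational components with at most two distinguished points; if such an $E$ were not contracted by $\phi_{2}$, its image would be a rational component of $\tilde C_{t}$ with fewer than three distinguished points, contradicting prestability. This is a one-line argument once the intermediate $\C^{n}$ is in hand --- the heavy lifting is entirely in constructing $\C^{n}$, which is what your approach tries to avoid.
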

\begin{proof}
The moduli stack $\SM_{g,n}$ admits a finite generically-etale cover by a scheme, say $M \rightarrow \SM_{g,n}$ (\cite{DeJong1}, 2.24). Let
$$
U:=\{t \in T \,| \, \text{ $(\C_{t}^{s}, \{\sigma_i(t)\}_{i=1}^{n})$ is stable.}\},
$$
and consider the Cartesian diagram
\[
\xymatrix{
U \times_{\SM_{g,n}} M \ar[d] \ar[r]&M \ar[d]\\
U \ar[r]& \SM_{g,n}\\
}
\]
Let $\tilde{U}$ be any irreducible component of $U \times_{\SM_{g,n}} M$ dominating $U$, and define $\tilde{T}$ to be the closure of the image of $\tilde{U}$ in $T \times_{\Spec \mathbb{Z}}M.$ Then $\tilde{T} \rightarrow T$ is an alteration satisfying the conclusion of (1).

Next, we claim that we may choose the alteration $\tilde{T} \rightarrow T$, so that there exists a diagram

\[
\xymatrix{
&\C^{n} \ar[rd]^{\phi_2} \ar[ld]_{\phi_1}&\\
\C^{s} \ar@{-->}[rr]^{\phi}\ar[dr]^{\pi_1} &&\tilde{\C} \ar[dl]_{\pi_2}\\
&\tilde{T} \ar@/^1pc/[lu]^{\{\sigma^s_i\}_{i=1}^{n}} \ar@/_1pc/[ru]_{\{\tilde{\sigma}_i\}_{i=1}^{n}}&
}
\]
satisfying
\begin{enumerate}
\item $\tilde{T}$ is a normal noetherian scheme,
\item $(\C^{n} \rightarrow S, \{\tau_{i}\}_{i=1}^{n})$ is a nodal curve,
\item $\phi_1$ and $\phi_2$ are regular birational maps over $\tilde{T}$.
\end{enumerate}

To see this, start by taking $\tilde{T} \rightarrow T$ as in (1). After blowing-up $\tilde{T}$ further, we may assume that there exists a flat projective morphism $X \rightarrow \tilde{T}$ of relative dimension one, admitting regular birational maps to both $\C^{s}$ and $\tilde{\C}$. (Apply Chow's lemma and the flattening results of \cite{RG} to the graph of $\phi$.) Let $Z \subset X$ denote the pure codimension-one subscheme obtained by taking the strict transform of the sections $\{\tilde{\sigma}_i\}_{i=1}^{n}$ on $X$. By a theorem of de Jong (\cite{DeJong2}, 2.4), we may alter $(X \rightarrow \tilde{T}, Z)$ to a nodal curve, i.e. there exists an alteration $\tilde{T}' \rightarrow \tilde{T}$ with $\tilde{T}'$ a normal noetherian scheme, a nodal curve $(\C^{n} \rightarrow \tilde{T}', \{\tau_{i}\}_{i=1}^{n})$, and a commutative diagram
\[
\xymatrix{
\C^{n} \ar[d] \ar[r]& X \ar[d]\\
\tilde{T}' \ar[r]& \tilde{T}\\
}
\]
such that the induced map $(\C^{n}, \cup_{i=1}^{n}\tau_{i}) \rightarrow (X \times_{\tilde{T}} \tilde{T}', Z \times_{\tilde{T}} \tilde{T}')$ is an isomorphism over the generic point of $\tilde{T}'$. In particular, $\C^{n}$ admits regular birational maps to both $(\C^{s} \times_{\tilde{T}} \tilde{T}')$ and $\tilde{\C} \times_{\tilde{T}} \tilde{T}'$, so $\tilde{T}' \rightarrow T$ gives the desired alteration.

Now fix an alteration $\tilde{T} \rightarrow T$ and a diagram satisfying (1)-(3) above. Since $\pi_1$ is proper, the set
\[
S:=\{t \in \tilde{T} \,| \, \text{$\phi$ is regular in a neighborhood of the fiber $\C_{t}^{s}$} \}
\]
is open in $\tilde{T}$. We must show that if $t \in \tilde{T}$ is any point such that the fiber $(\tilde{C}_{t}, \{\tilde{\sigma}_i(t)\}_{i=1}^{n})$ is prestable, then $t \in S$. By Lemma \ref{L:Normality}, we have $(\phi_1)_*\O_{\C^{n}}=\O_{\C^s}$ and $(\phi_2)_*\O_{\C^{n}}=\O_{\tilde{\C}}$. Thus, it suffices to show that if $E \subset \C^{n}_t$ is any irreducible component contracted by $\phi_1$, then $E$ is also contracted by $\phi_2$. By the uniqueness of stable reduction, we have
$$
\Exc(\phi_{1})_{t}=\{ E \subset \C^{n}_t |\text{ $E$ is smooth rational with one or two distinguished points }\}
$$
Thus, if $E \subset \Exc(\phi_{1})_{t}$ is not contracted by $\phi_{2}$, its image is a rational component of $(\tilde{C}_{t}, \{\tilde{\sigma}_i(t)\}_{i=1}^{n})$ with fewer than three distinguished points. This is a contradiction, since $(\tilde{C}_{t}, \{\tilde{\sigma}_i(t)\}_{i=1}^{n})$ is prestable.
\end{proof}

\subsection{Contractions of curves}\label{S:BirationalMaps}

In Lemma \ref{L:BirationalBaseChange}, we will see that birational contractions between generically-smooth families of curves have the effect of replacing arithmetic genus $g$ subcurves by isolated singularities of genus $g$. This motivates the following definition.

\begin{definition}[Contraction of curves]\label{D:BirationalMap}
If $\phi:C \rightarrow D$ is a morphism of curves, let $\Exc(\phi)$ denote the union of those irreducible components $E \subset C$ which are contracted to a point $\phi(E) \in D$. We say that $\phi$ is a \emph{contraction} if it satisfies
\begin{itemize}
\item[(1)] $\phi$ is surjective with connected fibers,
\item[(2)] $\phi$ is an isomorphism on $C - \Exc(\phi)$,
\item[(3)] If $Z$ is any connected component of $\Exc(\phi)$, then the point $p:=\phi(Z) \in D$ satisfies
$g(p)=p_a(Z)$ and $m(p)=|Z \cap Z^{c}|$, where $g(p)$ and $m(p)$ are the genus and number of branches of $p$, as in Definition \ref{D:Genus}.

\end{itemize}
\end{definition}

\begin{remark}\label{R:Normalize}
 If $C$ is a nodal curve and $C \rightarrow D$ is a contraction, then we have a decomposition
$$
C=\tilde{D} \cup Z_1 \cup \ldots \cup Z_k,$$
where $Z_1, \ldots, Z_k$ are the connected components of $\Exc(\phi)$, and $\tilde{D}$ is the normalization of $D$ at $\phi(Z_1), \ldots, \phi(Z_k) \in C$. This is immediate from the fact that if $C$ is nodal, the points of $\overline{C \backslash Z_i}$ lying above $\phi(Z_i) \in D$ are smooth.

\end{remark}

\begin{example}[Contraction morphisms contracting an elliptic bridge]\label{E:Birat}
Let $$C=C_{1} \cup E \cup C_{2}$$ be a nodal curve with an elliptic bridge (see Figure \ref{F:BirationalMap}). Then there exist contraction morphisms contracting $E$ to a tacnode $(y^2-x^4)$ or to a planar cusp with a smooth transerverse branch $(xz, yz,y^2-x^3)$, since both these singularities have two branches and genus one. By contrast, the map contracting $E$ to an ordinary node is not a contraction because the genus of an ordinary node is zero. In fact, the map contracting $E$ to a node is the Stein factorization of the given contractions.
\end{example}
\begin{figure}
\scalebox{.50}{\includegraphics{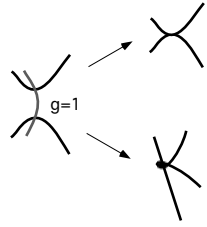}}
\caption{Two contractions of curves, each contracting an elliptic bridge.}\label{F:BirationalMap}
\end{figure}

\begin{proposition}[Existence of Contractions]\label{P:Contractions}
Let $\Delta$ be the spectrum of a discrete valuation ring, and $\pi:\C^{n} \rightarrow \Delta$ a generically-smooth, nodal curve over $\Delta$.  If $Z \subsetneq C^{n}$ is a proper subcurve of the special fiber, then there exists a diagram
\[
\xymatrix{
\C^{n} \ar[rr]^{\phi} \ar[dr]&&\C \ar[dl]\\
&\Delta&
}
\]
such that
\begin{itemize}
\item[(1)] $\phi$ is proper, birational, $\phi_*\O_{\C^{n}}=\O_{\C}$, and $\Exc(\phi)=Z$.
\item[(2)] $\C \rightarrow \Delta$ is a flat family of geometrically reduced connected curves.
\item[(3)] The restriction of $\phi$ to the special fiber induces a contraction of curves.
\end{itemize}
\end{proposition}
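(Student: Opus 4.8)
The plan is to construct the contraction $\phi$ via Artin's contractibility criterion for $1$-cycles on an algebraic space, applied to the subcurve $Z$ inside the (normal, by Lemma~\ref{L:Normality}) total space $\C^{n}$. First I would reduce to the case where $Z$ is connected: in general, if $Z = Z_{1} \sqcup \cdots \sqcup Z_{k}$, the components $Z_{i}$ are pairwise disjoint subcurves of the special fiber, and one can contract them one at a time (or simultaneously, since they meet in no point), so it suffices to produce, for each connected $Z_{i}$, a contraction of $\C^{n}$ collapsing exactly $Z_{i}$. So assume $Z$ is connected. Since $\C^{n} \rightarrow \Delta$ is generically smooth and nodal, $\C^{n}$ is a regular (in particular normal) two-dimensional algebraic space by Lemma~\ref{L:Normality}(1), and $Z$ is a connected, reduced, proper curve contained in the special fiber.

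Next I would verify Artin's criterion: a connected proper curve $Z$ on a normal two-dimensional algebraic space $\C^{n}$ (flat and proper over the excellent local ring $R$) is contractible to a point — i.e.\ there is a proper birational morphism $\phi: \C^{n} \rightarrow \C$ with $\C$ an algebraic space, normal of relative dimension one over $\Delta$, $\phi_{*}\O_{\C^{n}} = \O_{\C}$, and $\Exc(\phi) = Z$ — precisely because $Z$ is a proper subcurve of a fiber of a flat proper map to $\Delta$; the fiber class is numerically trivial on $\C^{n}$ over $\Delta$, hence the intersection form on the components of $Z$ (which are among the components of the fiber and do not exhaust it, as $Z \subsetneq C^{n}$) is negative definite. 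This is the standard setup (cf.\ Artin, \emph{Algebraization of formal moduli II}); I would cite it rather than reprove it. This gives (1) immediately, and $\C \rightarrow \Delta$ is flat (it is dominant with $\C$ integral of dimension two over the DVR, or one checks flatness fiberwise since the generic fiber is unchanged and the special fiber is one-dimensional) with geometrically reduced connected fibers — the generic fiber is untouched and $\phi_{*}\O_{\C^{n}} = \O_{\C}$ forces the special fiber to be connected and reduced — which is (2).

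For (3), I would analyze the restriction $\phi_{0}: C^{n} \rightarrow C_{0}$. Conditions (1) and (2) of Definition~\ref{D:BirationalMap} are formal: $\phi_{0}$ is surjective with connected fibers (base change of $\phi_{*}\O = \O$, using that $\C^{n}$ is flat over $\Delta$), it contracts exactly $Z$, and it is an isomorphism away from $Z$ since $\phi$ is. The substantive point is the genus/branch computation in (3): if $p := \phi_{0}(Z) \in C_{0}$, I must show $g(p) = p_{a}(Z)$ and $m(p) = |Z \cap Z^{c}|$. For the number of branches, note that since $C^{n}$ is nodal the normalization of $C_{0}$ at $p$ is obtained by separating $Z^{c}$ from $Z$, and the points of $\overline{C^{n} \setminus Z} = Z^{c}$ lying over $p$ are exactly $Z \cap Z^{c}$, which are smooth points of $Z^{c}$; so the branch count is $|Z \cap Z^{c}|$ (as in Remark~\ref{R:Normalize}). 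For the genus, I would use that $\phi_{*}\O_{\C^{n}} = \O_{\C}$ together with $\chi$-constancy in the flat family: $\chi(\O_{C_{0}}) = \chi(\O_{\C_{\eta}})$, and comparing with $\chi$ of the nodal curve $C^{n}$ via the contraction (the difference is accounted for by $\delta(p)$ and the combinatorics of how $Z$ was glued to $Z^{c}$) yields $\delta(p) = p_{a}(Z) + |Z \cap Z^{c}| - 1$, hence $g(p) = \delta(p) - m(p) + 1 = p_{a}(Z)$. Equivalently, this is exactly the content of Lemma~\ref{L:BirationalBaseChange} referenced in Section~\ref{S:BirationalMaps}, so I may simply invoke it once $\phi$ is constructed.

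The main obstacle is the construction step: making sure Artin's criterion applies in the algebraic-space, mixed-characteristic (excellent DVR) setting and that the resulting $\C$ is again flat over $\Delta$ with the correct fiber — here one must be slightly careful that contracting $Z$ does not accidentally merge $Z$ with part of $Z^{c}$ in a way that changes $m(p)$, but this is controlled by the nodality of $\C^{n}$ along $Z \cap Z^{c}$ and the fact that those nodes are not contracted. Everything after the contraction exists is bookkeeping with $\chi$ and normalizations.
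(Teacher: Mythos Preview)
Your approach is essentially the paper's: reduce to Artin's contractibility criterion via negative-definiteness of the intersection form on a proper subcurve of the fiber, then read off (1)--(3) from normality, flatness, and Lemma~\ref{L:BirationalBaseChange}. The handling of (2) and (3) is fine and matches the paper almost verbatim.

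There is one genuine gap in the construction step. You assert that $\C^{n}$ is \emph{regular} by Lemma~\ref{L:Normality}(1), but that lemma only gives \emph{normality}. A generically-smooth nodal family over a DVR need not have regular total space: at a node of the special fiber the complete local ring has the form $R[[x,y]]/(xy-t^{e})$, which is singular (an $A_{e-1}$ surface singularity) whenever $e\geq 2$. Artin's criterion as you cite it (and as the paper cites it, \cite[6.17]{Artin}) is stated for regular two-dimensional algebraic spaces, so you cannot apply it directly. The paper closes this gap by first passing to a minimal resolution $p:\tilde{\C}^{n}\to\C^{n}$ (still a nodal family over $\Delta$, via \cite{Lipman}) and then contracting $p^{-1}(Z)$ on the now-regular $\tilde{\C}^{n}$; the resulting map factors through $\C^{n}$ because $p$ has connected fibers contained in $p^{-1}(Z)$. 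You should insert this step.

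Two smaller remarks. First, your reduction to connected $Z$ is harmless but unnecessary: negative-definiteness and Artin's criterion handle all components at once, and the paper does not make this reduction. Second, you cite negative-definiteness of a proper subcurve of a fiber as standard; the paper actually writes out the short inductive proof (subtract a multiple of the full fiber and use $C^{2}=0$). Citing it is acceptable, but be aware that the argument is elementary enough that the paper chose to include it.
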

\begin{proof}
We claim that it is sufficient to produce a birational morphism $\phi:\C^{n} \rightarrow \C$ such that $\Exc(\phi)=Z$. Indeed, after taking the Stein factorization, we may assume that $\phi$ satisfies (1). By Lemma \ref{L:Normality}, $\C^{n}$ is normal, so $\C$ is as well. In particular, the special fiber $C$ is Cohen-Macaulay and therefore has no embedded points. Since each component of $C$ is the birational image of an irreducible component of $C^{s}$, no component of $C$ can be generically non-reduced. It follows that the special fiber is reduced and connected. In addition, $\C \rightarrow \Delta$ is flat since the generic point of $\C$ maps to the generic point of $\Delta$. This shows that $\C \rightarrow \Delta$ satisfies (2). Finally condition (3) is a consequence of the more general statement proved in Lemma \ref{L:BirationalBaseChange} below.

It remains to show that there exists a birational morphism $\phi:\C^{n} \rightarrow \C$ with $\Exc(\phi)=Z$. There exists a minimal resolution of singularities $p: \tilde{\C}^{n} \rightarrow \C^{n}$ such that $ \tilde{\C}^{n}\rightarrow \Delta$ is still a nodal curve \cite{Lipman}, and it is sufficient to produce a birational contraction $\phi: \tilde{\C}^{n} \rightarrow \C$ with $\Exc(\phi)=p^{-1}(Z)$. Thus, we may assume that the total space $\C^{n}$ is regular to begin with. 

Now $\C^{n}$ is a regular algebraic space over an excellent Dedekind ring, so there is a necessary and sufficient condition for the existence of a contraction. If $Z_1 \cup \ldots \cup Z_k$ are the irreducible components of the $Z$, then the intersection matrix $||(Z_i.Z_j)||$ must be negative-definite \cite[6.17]{Artin} . In fact, it is easy to see that any proper subcurve of the special fiber must have negative-definite intersection matrix. 

To see this, let $C_1, \ldots, C_m$ be the irreducible components of the special fiber, and $C=\sum_{i=1}^{m}C_i$ the class of the fiber. Let $Z:=a_1C_1+\ldots+a_kC_k$ be an arbitrary cycle supported on the special fiber. We will prove by induction on $k$ that $Z^{2} \leq 0$, with equality iff $Z$ is a multiple of the entire fiber.  Let us consider first the case when $Z$ is effective. After reordering, we may assume that $a_1>a_2>\ldots>a_k>0$. Since $C^{2}=C.Z=0$, we have
\begin{align*}
Z^{2}&=(Z-a_kC)^2=\left( \sum_{i=1}^{k-1}(a_i-a_k)C_i -a_{k}\sum_{i=k+1}^{m}C_i \right)^2 \\
&=\left(\sum_{i=1}^{k-1}(a_i-a_k)C_i\right)^2-2a_{k}\left(\sum_{i=1}^{k-1}(a_i-a_k)C_i\right).\left(\sum_{i=k+1}^{m}C_i\right)+a_{k}^2\left(\sum_{i=k+1}^{m}C_i\right)^2\\
&=\left(\sum_{i=1}^{k-1}(a_i-a_k)C_i\right)^2-2a_{k}\left(\sum_{i=1}^{k-1}(a_i-a_k)C_i\right).\left(\sum_{i=k+1}^{m}C_i\right)-a_{k}^2\left(\sum_{i=1}^{k}C_i \right).\left(\sum_{i=k+1}^{m}C_i\right)\\
\end{align*}
The latter two terms are obviously non-positive and by induction the first term is non-positive as well. Furthermore, $Z^{2}=0$ iff each term is zero which evidently forces $a_1=\ldots=a_{k-1}=a_k$ and $k=m$.

If $Z$ is not effective, then we may write $Z=Z_{1}-Z_{2}$, where $Z_{1}$ and $Z_{2}$ are effective with no common components. Then we have
$$
Z^2=Z_1^2-2Z_1.Z_2+Z_2^2 \leq 0,
$$
with equality iff $Z_{1}$ and $Z_{2}$ are multiples of the entire fiber. This completes the proof.
\end{proof}

\begin{lemma}\label{L:BirationalBaseChange}
Let $S$ be an irreducible, normal, noetherian scheme, and let $\pi_1:X \rightarrow S$ and $\pi_2:Y \rightarrow S$ be two curves over $S$. Suppose that $\pi_1$ is nodal, and that $\pi_1$ and $\pi_2$ are generically smooth. If we are given a birational morphism over $S$
\[
\xymatrix{
X \ar[rr]^{\phi} \ar[rd]_{\pi_1}&& Y \ar[dl]^{\pi_2}\\
&S&
}
\]
then the induced map $\phi_{s}:X_{s} \rightarrow Y_{s}$ is a contraction, for each geometric point $s \in S$.
\end{lemma}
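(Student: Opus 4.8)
The plan is to reduce the global statement to a fiberwise computation by base-changing to a sufficiently small DVR, and then to apply Proposition \ref{P:Contractions} together with the normality results of Lemma \ref{L:Normality}. Fix a geometric point $s \in S$. To check that $\phi_s:X_s \rightarrow Y_s$ is a contraction, I must verify the three conditions of Definition \ref{D:BirationalMap}: that $\phi_s$ is surjective with connected fibers, that it is an isomorphism off $\Exc(\phi_s)$, and that each connected component $Z$ of $\Exc(\phi_s)$ is contracted to a point $p$ with $g(p)=p_a(Z)$ and $m(p)=|Z \cap Z^c|$.

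First I would handle connectedness of fibers. By Lemma \ref{L:Normality}(1), both $X$ and $Y$ are normal (hence so is $Y$, and $X$ is nodal so in particular normal in the relevant sense), and by Lemma \ref{L:Normality}(2) the birational morphism $\phi$ is Stein: $\phi_*\O_X = \O_Y$. Since $\pi_2:Y\to S$ is flat with reduced connected fibers, cohomology and base change (applied to $\phi$, whose fibers have dimension $\leq 1$, after checking the relevant higher direct image vanishes or is compatible with base change) gives that $(\phi_s)_*\O_{X_s} = \O_{Y_s}$, so $\phi_s$ has geometrically connected fibers; surjectivity is automatic since $\phi$ is proper, dominant, and $Y$ is irreducible with $Y_s$ a specialization of the generic fiber. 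This is where I expect the one genuinely delicate point to lie: ensuring the Stein property persists after restriction to the special fiber. The clean way is to note $R^1\phi_*\O_X$ is a coherent sheaf supported on the finite set $\phi(\Exc(\phi))$, and to reduce — via base change to a trait $\Delta \to S$ meeting the locus where $\phi$ is not an isomorphism — to the situation of Proposition \ref{P:Contractions}, where the conclusion is already established.

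Concretely, for the second and third conditions I would argue as follows. Let $Z$ be a connected component of $\Exc(\phi_s)$ and $p = \phi_s(Z) \in Y_s$. Choose a trait $\Delta \to S$ whose closed point maps to $s$ and whose generic point maps to the generic point of $S$ (possible since $S$ is irreducible and $s \in \overline{\{\eta_S\}}$). Pulling back $\phi$ over $\Delta$ gives a birational morphism $X_\Delta \to Y_\Delta$ of generically smooth families over a DVR, with $X_\Delta \to \Delta$ nodal. Now $Z$ is a proper subcurve of the special fiber of $X_\Delta$ — proper because $\phi_s$ is birational onto $Y_s$ and $Y_s$ is not entirely contracted — so Proposition \ref{P:Contractions} produces a contraction $X_\Delta \to \C$ with exceptional locus exactly $Z$ (more precisely, with exceptional locus the full $\Exc(\phi_s)$); after taking Stein factorizations, this contraction and $Y_\Delta$ agree, because a finite birational morphism of normal algebraic spaces is an isomorphism (Lemma \ref{L:Normality}(2), or \cite[4.7]{Knutson}). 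Hence the restriction of $\phi$ to the special fiber is literally the contraction of curves produced in Proposition \ref{P:Contractions}, and conditions (2) and (3) — including the genus and branch-number bookkeeping $g(p) = p_a(Z)$, $m(p) = |Z \cap Z^c|$ — hold by that proposition (which in turn invokes the genus additivity formula of Definition \ref{D:Genus} applied to the contraction of the component(s) $Z$ of the special fiber).

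The main obstacle, then, is not any single hard estimate but the bookkeeping of base change: one must be careful that "$\phi_s$ is a contraction" is exactly what Proposition \ref{P:Contractions} delivers when specialized to a trait through $s$, and that independence of the chosen trait is automatic because the conclusion is a statement purely about the single fiber $\phi_s$. Once the trait reduction is set up, everything follows from results already in hand: Lemma \ref{L:Normality} for normality and the Stein property, and Proposition \ref{P:Contractions} for the precise structure of the contraction on the special fiber. I would close by remarking that this lemma is what licenses the slogan in the outline — "a birational contraction of generically smooth families replaces genus $g$ subcurves by genus $g$ singularities" — fiber by fiber.
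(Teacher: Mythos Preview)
Your proposal contains a genuine circularity. You plan to reduce to a trait $\Delta \to S$ and then invoke Proposition~\ref{P:Contractions} to conclude that the restriction of $\phi$ to the special fiber is a contraction of curves. But look at the proof of Proposition~\ref{P:Contractions}: its condition~(3) --- precisely the statement that the restriction to the special fiber is a contraction --- is not proved there, but is deferred to Lemma~\ref{L:BirationalBaseChange} (``Finally condition (3) is a consequence of the more general statement proved in Lemma~\ref{L:BirationalBaseChange} below''). So the argument you are relying on is the very lemma you are trying to prove.

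The paper's actual proof avoids any reduction to a trait and instead computes $\delta(p)$ directly via Euler characteristics. After noting $\phi_*\O_X=\O_Y$ (Lemma~\ref{L:Normality}) and hence connected fibers by Zariski's main theorem, one writes
\[
\delta(p)=\chi(\O_{\overline{X_s\setminus Z}})-\chi(\O_{Y_s})=\chi(\O_{\overline{X_s\setminus Z}})-\chi(\O_{X_s})=-\chi(I_{\overline{X_s\setminus Z}}),
\]
where the middle equality uses only that $X_s$ and $Y_s$ sit in flat families with isomorphic generic fiber (so their Euler characteristics agree). Since $I_{\overline{X_s\setminus Z}}|_Z\simeq\O_Z(-Z\cap Z^c)$, this gives $\delta(p)=p_a(Z)+m(p)-1$, i.e.\ $g(p)=p_a(Z)$. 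This Euler-characteristic bookkeeping is the missing idea; your worry about whether the Stein property persists on fibers is not needed, and the trait reduction buys nothing since the computation works directly over the given normal base.
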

\begin{proof}
By Lemma \ref{L:Normality}, we have $\phi_{*}\O_{X}=\O_{Y}$. Using this, we will show that $\phi_{s}$ satisfies conditions (1)-(3) of Definition \ref{D:BirationalMap}. By Zariski's main theorem, $\phi$ has geometrically connected fibers, so $\phi_{s}$ satisfies (1). Furthermore, $\phi$ is an isomorphism when restricted to the complement of the positive-dimensinal fibers of $\phi$, so $\phi_{s}$ satisfies (2). It remains to verify that $\phi_{s}$ satisfies (3).

Without loss of generality, we may assume that $Z:=\Exc(\phi_{s})$ is connected, and we must show that $p:=\phi_{s}(Z) \in Y_{s}$ is a singularity of genus $g$. Since the number of branches of $p \in Y_{s}$ is, by definition, the number of points lying above $p$ in the normalization, we have
$$m(p)=|\overline{X_{s} \backslash Z} \cap Z|.$$
To obtain $\delta(p)=p_a(Z)+m(p)-1$, note that
\begin{align*}
\delta&=\chi(X_{s},\O_{\overline{X_{s} \backslash Z} })-\chi(Y_{s},\O_{Y_{s}})\\
&=\chi(X_{s},\O_{\overline{X_{s} \backslash Z} })-\chi(X_{s},\O_{X_{s}})\\
&=-\chi(X_{s},I_{\overline{X_{s} \backslash Z}}) .
\end{align*}
The first equality is just the definition of $\delta$ since $\overline{X_{s} \backslash Z}$ is the normalization of $Y_{s}$ at $p$. The second equality follows from the fact that $X_{s}$ and $Y_{s}$ occur in flat families with the same generic fiber, and the third equality is just the additivity of Euler characteristic on exact sequences. Since $I_{\overline{X_{s} \backslash Z}}$ is supported on $Z$, we have
$$\chi(X_{s},I_{\overline{X_{s} \backslash Z}})=\chi(Z,I_{\overline{X_{s} \backslash Z}}|_{Z})= \chi(Z,\O_{Z}(-Z \cap \overline{X_{s} \backslash Z}))=1-m(p)-p_a(Z),$$
which gives the desired equality.
\end{proof}

\subsection{$\Z$-stability}\label{S:ZStability}
In this section, we define the stability condition associated to a fixed extremal assignment $\Z$. Using the definition of a contraction (Definition \ref{D:BirationalMap}), we can recast our original definition of $\Z$-stability (Definition \ref{D:Zstable}) as follows:

\begin{definition}[$\Z$-stable curve]\label{D:ZStable}
A smoothable $n$-pointed curve $(C,\pn)$ is \emph{$\Z$-stable} if there exists a  stable curve $(C^s,\spn)$ and a contraction $\phi:(C^s, \spn) \rightarrow (C,\pn)$ such that $\Exc(\phi)=\Z(C^s)$.
\end{definition}

We will make frequent use of the following observation: If $(C, \pn)$ is $\Z$-stable, and $\phi:(C^{s}, \spn) \rightarrow (C, \pn)$ is \emph{any} contraction from a stable curve, then $\Exc(\phi)=\Z(C^{s})$. (The definition of $\Z$-stability asserts the existence of a single contraction with this property.) In order to prove this, we need the following lemma which gives an explicit description of the set of stable curves admitting contractions to a fixed prestable curve.
\begin{lemma}\label{L:Mapping}
Let $(C,\pn)$ be an $n$-pointed prestable curve, and let $z_1, \ldots, z_k \in C$ be the set of points
which satisfy one of the following conditions:
\begin{itemize}
\item[(1)] $z_{i}$ is non-nodal singularity,
\item[(2)] $z_i$ is a node, and at least one marked point is supported at $z_i$, 
\item[(3)] $z_i$ is a smooth point, and at least two marked points are supported at $z_i$.
\end{itemize}
As in Definition \ref{D:Genus}, set
$m_{i}=m(z_i)$,
$g_{i}=g(z_i)$,
and let $l_{i}$ denote the number of marked points supported at $z_{i}$. There exists a map
\[
g:=g_{(C,\{p_i\})}:\prod_{i=1}^{k} \SM_{g_i, m_i+l_i} \rightarrow \SM_{g,n}
\]
with the property that a stable curve $(C^{s}, \spn)$ admits a contraction to $(C,\pn)$ iff it lies in the image of $g$.

\end{lemma}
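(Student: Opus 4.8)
The plan is to construct the map $g$ explicitly by a "plumbing" or "gluing" construction, and then to verify the stated characterization of its image by appealing to Remark \ref{R:Normalize}. First I would observe that, given $(C,\pn)$ prestable, the points $z_1,\ldots,z_k$ singled out in the statement are exactly the points that \emph{must} be "resolved" when building a stable curve $C^s$ mapping to $C$ via a contraction: at a point $z_i$ of type (1), (2), or (3) the curve $(C,\pn)$ either has a non-nodal singularity, or fails to be nodal-with-smooth-distinct-markings in a way that a stable curve cannot exhibit. Away from $\{z_1,\ldots,z_k\}$ the curve $(C,\pn)$ is already nodal with smooth distinct marked points, so any contraction $\phi:(C^s,\spn)\to(C,\pn)$ must restrict to an isomorphism there (by condition (2) of Definition \ref{D:BirationalMap}, combined with Remark \ref{R:Normalize} applied near the $z_i$: the points of $C^s-\Exc(\phi)$ lying over each $z_i$ are smooth, forcing $\Exc(\phi)$ to sit entirely over $\{z_i\}$).

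Next I would define $g$ as follows. Let $(\tilde C, \{\tilde p_j\}, \{\tilde q_\ell\})$ be the pointed normalization of $C$ at $z_1,\ldots,z_k$, where the $\tilde q_\ell$ are the preimages of the $z_i$ (there are $m_i$ of them over each $z_i$) and the $\tilde p_j$ are the preimages of the marked points supported at the $z_i$ (there are $l_i$ over each $z_i$). Because $(C,\pn)$ is prestable and we have only normalized at the $z_i$, the curve $\tilde C$ together with all these points plus the marked points \emph{not} supported at any $z_i$ is itself a disjoint union of prestable curves; stabilizing it (contracting the finitely many unstable rational tails/bridges that can arise on components of $\tilde C$) produces a canonical stable object $(\tilde C^s, \ldots)$. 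Given a point of $\prod_{i=1}^k \SM_{g_i,m_i+l_i}$, i.e. a choice of stable curve $(D_i, \text{pts})$ with $g_i$ being the genus and $m_i+l_i$ marked points partitioned into $m_i$ "attaching" points and $l_i$ "marked" points, one glues each $D_i$ onto $\tilde C^s$ by identifying its $m_i$ attaching points with the $m_i$ points of $\tilde C^s$ lying over $z_i$, transferring the $l_i$ marked points of $D_i$ to the markings $p_i$ of the resulting curve. The result is a nodal curve of arithmetic genus $g$ — here one checks genus additivity: normalizing at the new nodes recovers $\tilde C^s \sqcup \coprod D_i$, and $p_a(C) = p_a(\tilde C) + \sum(\delta(z_i) - m_i +1) = p_a(\tilde C)+\sum g_i$, which matches — and it is prestable hence stable since each $D_i$ is stable and $\tilde C^s$ was stabilized; this defines $g$ on points, and the same construction works in families, so $g$ is a genuine morphism of stacks. (One should note $g$ need not be a monomorphism — when $m_i\geq 3$ the attaching data carries no moduli, but for $m_i = 2$, i.e. a tacnode-type resolution, the gluing of the two branches involves a $k^*$ of crimping data, consistent with the discussion of Figure \ref{F:AttachingData}; however the statement only concerns the image, so this is irrelevant here.)

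Finally, for the characterization of the image: if $(C^s,\spn)$ lies in the image of $g$, then by construction there is an evident contraction $C^s \to C$ collapsing each $D_i$ (and the exceptional tails introduced in stabilizing $\tilde C$) — one verifies it is a contraction by checking conditions (1)–(3) of Definition \ref{D:BirationalMap} directly, the genus condition being exactly the genus computation above. Conversely, suppose $\phi:(C^s,\spn)\to(C,\pn)$ is any contraction from a stable curve. By Remark \ref{R:Normalize}, $C^s = \tilde C_\phi \cup Z_1 \cup \cdots \cup Z_r$ where the $Z_a$ are the connected components of $\Exc(\phi)$ and $\tilde C_\phi$ is the normalization of $C$ at the points $\phi(Z_a)$. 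Since $\phi$ is an isomorphism off $\Exc(\phi)$ and $C$ is nodal-with-smooth-distinct-markings away from $\{z_i\}$, every $\phi(Z_a)$ lies in $\{z_1,\ldots,z_k\}$; and conversely each $z_i$, being non-nodal or having coincident/marked points, cannot be a point of $C^s-\Exc(\phi)$, so each $z_i$ equals some $\phi(Z_a)$. Grouping the $Z_a$ by which $z_i$ they map to, each group glues to a curve $D_i$ with $p_a(D_i)=g(z_i)=g_i$ and with $m(z_i)=m_i$ branch points and $l_i$ transferred marked points; since $C^s$ is stable, each $D_i$ is stable, and $\tilde C_\phi$ with its induced points is the stabilization $\tilde C^s$ of the normalization. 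Thus $(C^s,\spn)$ is exactly $g$ evaluated at the point $([D_1],\ldots,[D_k])$. The main obstacle I anticipate is purely bookkeeping: setting up the marked-point partitions and the genus/arithmetic-genus accounting cleanly enough that the family version of $g$ is manifestly a morphism, and making sure the stabilization of $\tilde C$ interacts correctly with components of $\tilde C$ that acquire too few distinguished points only after normalization.
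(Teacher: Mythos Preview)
Your overall strategy matches the paper's, but there is a genuine gap: you introduce a stabilization step for $\tilde{C}$ that is both unnecessary and, were it nontrivial, would break the argument. The key observation you are missing is that the pointed partial normalization $(\tilde{C}, \{p_j\}, \{\tilde{q}_{ij}\})$ is \emph{already} stable. This is exactly where the prestability hypothesis on $(C,\pn)$ enters: the full normalization of $\tilde{C}$ coincides with that of $C$, and the set of distinguished points on each rational component is unchanged by the partial normalization at the $z_i$ (each point lying over some $z_i$ becomes a marked attaching point $\tilde{q}_{ij}$, while all other distinguished points are untouched). Hence every rational component still carries at least three distinguished points, and since $\tilde{C}$ is nodal with smooth distinct markings, it is stable. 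If your stabilization $\tilde{C} \to \tilde{C}^s$ actually contracted something, then gluing the $D_i$ to $\tilde{C}^s$ would produce a curve with \emph{no} contraction to $C$: by Remark~\ref{R:Normalize}, the complement of $\Exc(\phi)$ in any nodal $C^s$ mapping to $C$ must be exactly the partial normalization $\tilde{C}$, not a further contraction of it. Your assertion that ``$\tilde{C}_\phi$ with its induced points is the stabilization $\tilde{C}^s$'' is therefore wrong as stated; rather $\tilde{C}_\phi = \tilde{C}$ on the nose, and the worry you flag at the end about ``components of $\tilde{C}$ that acquire too few distinguished points after normalization'' simply does not arise.

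Two smaller points. First, your argument that $\phi(\Exc(\phi)) \subset \{z_1,\ldots,z_k\}$ is incomplete: the fact that $C$ is nodal-with-smooth-distinct-markings away from the $z_i$ does not by itself prevent some $Z_a$ from contracting to an unmarked node or a singly-marked smooth point. The paper's argument uses the stability of $C^s$: if $z \notin \{z_i\}$ then $(g(z),m(z),l(z))$ is one of $(0,1,0)$, $(0,1,1)$, $(0,2,0)$, so a connected component of $\Exc(\phi)$ mapping to $z$ would be a genus-zero subcurve of $C^s$ with at most two distinguished points, which is impossible. Second, your description of marked points on $\tilde{C}$ is muddled: the $l_i$ marked points supported at $z_i$ do not lift to separate points of $\tilde{C}$ (their ``preimages'' are exactly the $\tilde{q}_\ell$); they are carried instead by the glued-in curve $D_i$, as you correctly say later.
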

\begin{proof}
In order to define $g$, let us relabel the marked points of $C$:
\[
\{ p_i\}_{i=1}^{n}=\{ p_j \}_{j=1}^{r} \cup \{p_{1j}\}_{j=1}^{l_1} \cup \ldots \cup \{p_{kj} \}_{j=1}^{l_{k}},
\]
where $\{p_{ij}\}_{j=i}^{l_i}$ is the set of marked points supported at $z_{i}$, and the points $\{p_{j}\}_{j=1}^{r}$ are distinct smooth points of $C$. Let $\tilde{C} \rightarrow C$ denote the normalization of $C$ at $\{ z_i\}_{i=1}^{k}$, and let $\{\tilde{q}_{ij}\}_{j=1}^{m_i}$ denote the set of points on $\tilde{C}$ lying above $z_i$. The assumption that $(C,\pn)$ is prestable implies that each connected component of $(\tilde{C}, \{ p_{j}\}_{j=1}^{r}, \{\tilde{q}_{1j}\}_{j=1}^{m_1}, \ldots, \{\tilde{q}_{kj}\}_{j=1}^{m_k})$ is stable. Thus, we may define $g$ by sending
\[
\coprod_{i=1}^{k} (Z_i, \{p_{ij}\}_{j=1}^{l_i}, \{q_{ij}\}_{j=1}^{m_i}) \rightarrow (\tilde{C} \cup Z_1 \cup \ldots \cup Z_k, \{ p_{j}\}_{j=1}^{r}, \{p_{ij}\}_{j=1}^{l_1}, \ldots, \{p_{ij} \}_{j=1}^{l_{k}} ),
\]
where $\tilde{C}$ and $\coprod_{i=1}^{k} Z_{i}$ are glued by identifying $\tilde{q}_{ij} \sim q_{ij}$.

If $(C^{s}, \spn)$ is in the image of $g$, then we can write
\[(C^{s},\spn) = (\tilde{C} \cup Z_1 \cup \ldots \cup Z_k, \{ p_{j}\}_{j=1}^{r}, \{p_{ij}\}_{j=1}^{l_1}, \ldots, \{p_{ij} \}_{j=1}^{l_{k}} ),
\]
and we may define a contraction
\[
(C^{s},\spn) \rightarrow (C,\pn)
\]
by collapsing $Z_1, \ldots, Z_k$ to $z_1, \ldots, z_k$, and mapping $\tilde{C}$ birationally onto $C$.

Conversely, we claim that if $\phi:(C^{s}, \spn) \rightarrow (C,\pn)$ is any contraction, then $(C^{s}, \spn)$ is in the image of $g$. First, let us show that $\phi(\Exc(\phi))=\{z_1, \ldots, z_k\}$. Since a stable curve has no points satisfying (1), (2), or (3), it is clear that $\{z_1, \ldots, z_k\} \subset \phi(\Exc(\phi))$. Conversely, if $z \in C$ does not satisfy (1), (2), or (3), then $z$ is either an unmarked node, a marked smooth point, or an unmarked smooth point. In either case, since the genus of a node or a smooth point is zero, $\phi^{-1}(z)$ must be a reduced, connected, arithmetic genus zero curve with only two distinguished points. But this is impossible, since $(C^{s}, \spn)$ is stable.

Now, if $\phi:(C^{s}, \spn) \rightarrow (C,\pn)$ is any contraction, then we have (see Remark \ref{R:Normalize})
\[
C^{s}=\tilde{C} \cup Z_1 \cup \ldots \cup Z_k,
\]
where $\phi(Z_i)=z_i$, and $\tilde{C}$ is the normalization of $C$ at $z_1, \ldots, z_k$. If $z_i$ is a singularity with $m_i$ branches, then $Z_i$ meets $Z_i^{c}$ at $m_i$ points.  If $z_{i}$ supports $l_{i}$ marked points, then $Z_{i}$ supports the same set of marked points. Thus, if we mark points of attachment in the usual way, we may consider $Z_i$ as an $l(z_i)+m(z_i)$-pointed stable curve of genus $g(z_i)$, for each $i=1, \ldots, k$. This shows that $(C^s, \spn)$ is in the image of $g$.
\end{proof}
\begin{corollary}\label{C:Independence}
Let $(C,\pn)$ be a $\Z$-stable curve, and suppose that
\[
\phi:(C^{s}, \spn) \rightarrow (C,\pn)
\]
is any contraction from a stable curve $(C^{s}, \spn)$. Then $\Exc(\phi)=\Z(C^{s})$.
\end{corollary}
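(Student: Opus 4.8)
The plan is to connect the given stable curve $(C^s,\spn)$ to the stable curve witnessing the $\Z$-stability of $(C,\pn)$ through the generic point of the locus of \emph{all} stable curves admitting a contraction to $(C,\pn)$, and to propagate the identity $\Exc(\phi)=\Z(C^s)$ across the resulting two specializations by means of axiom (3) of Definition \ref{D:Assignment}.

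First I would invoke Lemma \ref{L:Mapping}: a stable curve admits a contraction to $(C,\pn)$ iff it lies in the image of $g:=g_{(C,\{p_i\})}\colon \prod_{i=1}^{k}\SM_{g_i,m_i+l_i}\to \SM_{g,n}$, and the proof of that lemma exhibits such a curve as $\tilde C\cup Z_1\cup\cdots\cup Z_k$ with $\tilde C$ the normalization of $C$ at the intrinsically defined points $z_1,\dots,z_k$, the $Z_i$ being the connected components of the exceptional locus; in particular the exceptional locus of \emph{any} contraction $E^s\to C$ is this subcurve $Z_1\cup\cdots\cup Z_k$, depending only on $E^s$, which I will denote $\Exc(E^s)$. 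Since $(C,\pn)$ is $\Z$-stable there is a stable curve $(D^s,\spn)$ with a contraction $\psi$ and $\Exc(\psi)=\Z(D^s)$, so $[C^s]$ and $[D^s]$ both lie in $\im(g)$; as $\prod_i\SM_{g_i,m_i+l_i}$ is irreducible and proper, $\im(g)$ is closed and irreducible, with generic point $\eta$ corresponding to a stable curve $C^\eta=\tilde C\cup Z_1^\eta\cup\cdots\cup Z_k^\eta$ of which $[C^s]$ and $[D^s]$ are specializations. The device that makes the argument work is already in the proof of Lemma \ref{L:Mapping}: over $\prod_i\SM_{g_i,m_i+l_i}$ there is a family of these stable curves equipped with a contraction onto the \emph{constant} family $C\times\prod_i\SM_{g_i,m_i+l_i}$, in which the $\tilde C$-part is a constant subfamily.

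Now I would show, in two steps, that $\Z(E^s)=\Exc(E^s)$ whenever $[E^s]\in\im(g)$ — first for $E^s=C^\eta$, then for $E^s=C^s$. In each step one pulls back the universal constant-target contraction along a map $\Delta\to\prod_i\SM_{g_i,m_i+l_i}$ sending the geometric generic point to $\eta$ and the closed point to $[D^s]$, resp. $[C^s]$, obtaining a family of stable curves $\mathcal C^s=\mathcal C_1\cup\cdots\cup\mathcal C_m\to\Delta$ (each $\mathcal C_j$ generically smooth) contracting to a constant family $C\times\Delta$, with generic fiber $C^\eta$ and special fiber $D^s$, resp. $C^s$. Writing the induced dual graph specialization as $v_j\leadsto v_{j,1}'\cup\cdots\cup v_{j,k_j}'$, the constancy of the $\tilde C$-part yields: if $v_j\notin\Exc(C^\eta)$ then $\mathcal C_j$ is constant, so $k_j=1$ and $v_{j,1}'=v_j$ lies outside $\Exc$ of the special fiber; if $v_j\in\Exc(C^\eta)$ then $\mathcal C_j$ lies inside one of the $Z_i$-subfamilies, so all of $v_{j,1}',\dots,v_{j,k_j}'$ lie in $\Exc$ of the special fiber. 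Hence $v_j\in\Exc(C^\eta)\iff v_{j,1}',\dots,v_{j,k_j}'\in\Exc(\text{special fiber})$, which lines up exactly with the condition $v_j\in\Z(C^\eta)\iff v_{j,1}',\dots,v_{j,k_j}'\in\Z(\text{special fiber})$ supplied by axiom (3). For the first step this immediately upgrades $\Z(D^s)=\Exc(D^s)$ to $\Z(C^\eta)=\Exc(C^\eta)$. For the second step, knowing $\Z(C^\eta)=\Exc(C^\eta)$, the forward implication of axiom (3) gives $\Exc(C^s)\subseteq\Z(C^s)$; and a component $w=v_{j,\ell}'$ of $C^s$ with $w\in\Z(C^s)\setminus\Exc(C^s)$ would force $v_j\notin\Exc(C^\eta)=\Z(C^\eta)$, hence $k_j=1$ and $w=v_j$ by constancy, and then axiom (3) applied to the trivial specialization $v_j\leadsto v_j$ gives $w\notin\Z(C^s)$, a contradiction. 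Thus $\Z(C^s)=\Exc(C^s)=\Exc(\phi)$.

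I expect the main obstacle to be precisely the bookkeeping in that last step: axiom (3) controls membership of a generic vertex in $\Z$ only through the \emph{conjunction} of the conditions on all of its limits, so one must rule out a limit component that lies in $\Z$ of the special fiber without lying in $\Exc$ of it, and the only leverage for this is the fact that non-exceptional components of $C^\eta$ do not degenerate. This is why it is essential to realize the two specializations by the universal \emph{constant}-target contraction of Lemma \ref{L:Mapping} rather than by an arbitrary family of stable curves interpolating the two dual graphs.
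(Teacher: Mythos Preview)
Your proof is correct and follows essentially the same route as the paper's: invoke Lemma \ref{L:Mapping} to parametrize all stable curves contracting to $(C,\pn)$ by $\prod_i\SM_{g_i,m_i+l_i}$, pass through the generic point of this product, and apply axiom (3) twice. Your treatment of the second specialization is in fact more explicit than the paper's --- you spell out why the constancy of the $\tilde C$-subfamily forces non-exceptional generic components to have a single non-exceptional limit, which is exactly what is needed to rule out a component $w\in\Z(C^s)\setminus\Exc(C^s)$; the paper simply asserts the implication.
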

\begin{proof}
By Lemma \ref{L:Mapping}, there exists a map $g:\prod_{i=1}^{k} \SM_{g_i, m_i+l_i} \rightarrow \SM_{g,n},$ such that $(C^{s}, \spn)$ admits a contraction to $(C, \pn)$ iff $(C^{s}, \spn) \in \Image(g).$ The pull-back of the universal curve $\C \rightarrow \SM_{g,n}$ via $g$ decomposes as
$$
(\tilde{C} \times \prod_{j=1}^{k}\SM_{g_j,m_j+l_j}  ) \coprod \left(\coprod_{j=1}^{k}\C_i \right),
$$
where $\C_{i}$ is the pull-back of the universal curve over $ \SM_{g_i,m_i+l_i}$ via the $i^{th}$ projection $\prod_{j=1}^{k}\SM_{g_j,m_j+l_j}  \rightarrow \SM_{g_i,m_i+l_i}$. For any geometric point $x \in \prod_{i=1}^{k} \SM_{g_i, m_i+l_i}$, let $C^{s}_x$ denote the fiber $\pi^{-1}(x)$. Then $C^{s}_x$ decomposes as
\[C^{s}_x= \tilde{C} \cup (\C_1)_{x} \cup \ldots \cup (\C_{k})_{x},
\]
and there exists a contraction $\phi: C^{s}_x \rightarrow C$ with $\Exc(\phi)=(\C_1)_{x} \cup \ldots \cup (\C_{k})_{x}$. 

The hypothesis that $(C, \pn)$ is $\Z$-stable implies that \emph{there exists} a geometric point $y \in \prod_{i=1}^{k} \SM_{g_i, m_i+l_i}$ such that $\Z(\C^{s}_{y})=\cup_{i=1}^{k}(\C_{i})_y$. To prove the corollary, we must show $\Z(\C^{s}_{x})=\cup_{i=1}^{k}(\C_{i})_x$ for \emph{every} geometric point $x \in \prod_{i=1}^{k} \SM_{g_i, m_i+l_i}.$ This follows easily from two applications of the one-parameter specialization property for extremal assignments: First, let $\zeta \in  \prod_{i=1}^{k} \SM_{g_i, m_i+l_i}$ be the generic point, and consider a map $\Delta \rightarrow  \prod_{i=1}^{k} \SM_{g_i, m_i+l_i}$ sending $\eta \rightarrow \zeta$, $0 \rightarrow y$. Applying
Definition \ref{D:Assignment} (3) to the induced family over $\Delta$, we conclude 
$$\Z(\C^{s}_{y})=(\C_{1})_y \cup \ldots \cup (\C_{k})_{y} \implies \Z(\C^{s}_{\overline{\zeta}})=(\C_{1})_{\overline{\zeta}} \cup \ldots \cup (\C_{k})_{\overline{\zeta}}.$$
Next, let $x \in  \prod_{i=1}^{k} \SM_{g_i, m_i+l_i}$ be an arbitrary geometric point, and consider a map $\Delta \rightarrow  \prod_{i=1}^{k} \SM_{g_i, m_i+l_i}$ sending $\eta \rightarrow \zeta, 0 \rightarrow x$. Applying
Definition \ref{D:Assignment} (3) to the induced family over $\Delta$, we see that 
$$\Z(\C^{s}_{\overline{\zeta}})=(\C_{1})_{\overline{\zeta}} \cup \ldots \cup (\C_{k})_{\overline{\zeta}} \implies \Z(\C^{s}_{x})=(\C_{1})_{x} \cup \ldots \cup (\C_{k})_{x}.$$

\end{proof}

\section{Construction of $\SM_{g,n}(\Z)$}\label{S:Construction}
Throughout this section, we fix an extremal assignment $\Z$ over $\SM_{g,n}$.

\begin{definition}[The moduli stack of $\Z$-stable curves]
Let $\C \rightarrow \V_{g,n}$ be the universal curve over $\V_{g,n}$, the main component in the stack of all curves (Section \ref{S:MainResult}). We define $\SM_{g,n}(\Z) \subset \V_{g,n}$ as the collection of points $\Spec k \rightarrow \SV_{g,n}$ such that the geometric fiber $\C \times_{\V_{g,n}} \overline{k}$ is $\Z$-stable.
\end{definition}

The first main theorem of this paper is
\begin{theorem}\label{T:Construction}
$\SM_{g,n}(\Z) \subset \SV_{g,n}$ is a stable modular compactification of $\M_{g,n}$.
\end{theorem}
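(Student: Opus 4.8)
The plan is to establish the three defining properties of a stable modular compactification in turn: (i) $\SM_{g,n}(\Z)\subset\V_{g,n}$ is an open substack, (ii) $\SM_{g,n}(\Z)$ contains $\M_{g,n}$ and is proper over $\Spec\mathbb{Z}$, and (iii) every geometric point of $\SM_{g,n}(\Z)$ is prestable. Property (iii) is essentially immediate from the definition of $\Z$-stability: if $(C,\pn)$ admits a contraction $\phi\colon(C^s,\spn)\to(C,\pn)$ from a stable curve with $\Exc(\phi)=\Z(C^s)$, then the components of $C$ not in the image of $\Exc(\phi)$ are components of $C^s$, hence satisfy the prestability bound, and a rational component arising as $\phi(Z)$ for a connected component $Z$ of $\Z(C^s)$ is a rational $m$-fold-type point with $m=|Z\cap Z^c|\geq 1$ that is also marked or has other branches — one checks using the stability of $C^s$ and axiom (1) ($\Z(G)\neq G$) that it carries at least three distinguished points. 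Also $\M_{g,n}\subset\SM_{g,n}(\Z)$ since for a smooth curve $C^s$ with distinct marked points the dual graph is a single unmarked-of-positive-weight vertex (or has the relevant trivial structure), so $\Z(C^s)=\emptyset$ and the identity is the required contraction.

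For openness (i), I would follow the outline in Section \ref{S:Outline} more or less verbatim. Given a generically smooth family $(\C\to T,\sigman)$ over an irreducible base, one wants the $\Z$-stable locus $S\subset T$ to be open; since openness may be checked after a proper surjective base change, Lemma \ref{L:ExtendingCurves} lets us assume there is a stable curve $\C^s\to T$ with a birational morphism $\phi\colon\C^s\to\C$ over $T$, after passing to a normal alteration. By Lemma \ref{L:BirationalBaseChange} the fibers of $\phi$ over the locus where $\phi$ is regular are contractions of curves, and by Corollary \ref{C:Independence} the fiber $\C_t$ is $\Z$-stable precisely when $\Exc(\phi_t)=\Z(C^s_t)$; on the prestable locus (which contains $S$) Lemma \ref{L:ExtendingCurves}(2) guarantees $\phi$ is regular there. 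So it suffices to show $\{t : \Exc(\phi_{\bar t})=\Z(C^s_{\bar t})\}$ is open, and this is exactly what axiom (3) of Definition \ref{D:Assignment} delivers: the condition " $E\in\Exc(\phi)$ " and the condition " $E\in\Z$ " each cut out a union of strata and are compatible with the stratification by dual graph, and axiom (3) says the two agree on a stratum iff they agree after any specialization into it, forcing the locus of agreement to be open (equivalently, closed under generization).

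Properness (ii) splits into existence and uniqueness of limits, via the valuative criterion over $\Delta$. For existence: given a smooth family over $\eta$, complete it to a stable curve $\C^s\to\Delta$ by stable reduction, take $Z=\Z(C^s)\subsetneq C^s$ (proper by axiom (1)), and apply Proposition \ref{P:Contractions} to obtain a contraction $\phi\colon\C^s\to\C$ over $\Delta$ with $\Exc(\phi)=\Z(C^s)$; the special fiber $C$ is then $\Z$-stable by Definition \ref{D:ZStable}, and its smoothability follows since $\C\to\Delta$ is a generically smooth family, placing $[C]\in\V_{g,n}$. For uniqueness: if $\C_1\to\Delta$ and $\C_2\to\Delta$ are $\Z$-stable families agreeing on $\eta$, dominate the closure of the graph by a stable curve (Lemma \ref{L:ExtendingCurves} applied over $\Delta$, possibly after finite base change) to get $\C^s$ with birational morphisms $\phi_i\colon\C^s\to\C_i$; each $\phi_i$ induces a contraction on the special fiber (Lemma \ref{L:BirationalBaseChange}), so by Corollary \ref{C:Independence} $\Exc(\phi_1)=\Z(C^s)=\Exc(\phi_2)$, and since $\phi_1,\phi_2$ are Stein (Lemma \ref{L:Normality}), the rational map $\C_1\dashrightarrow\C_2$ extends to an isomorphism over $\Delta$ compatible with sections — one has to be slightly careful at the level of stacks, checking that the isomorphism is unique so that the valuative criterion for a Deligne–Mumford (quasi-separated) stack applies, but this is standard given the quasi-finite diagonal. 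The main obstacle is the existence-of-limits step, specifically verifying that Artin's contractibility criterion applies to $Z=\Z(C^s)$: Proposition \ref{P:Contractions} handles exactly this by showing every proper subcurve of a special fiber has negative-definite intersection matrix, so the real content is already packaged there, and the remaining work is bookkeeping to match $\Exc$ with $\Z$ and to promote fiberwise statements to the stacky valuative criterion.
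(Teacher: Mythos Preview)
Your proposal is essentially the paper's own argument: openness via Lemma \ref{L:ExtendingCurves}, Lemma \ref{L:BirationalBaseChange}, Corollary \ref{C:Independence}, and axiom (3); properness via stable reduction plus Proposition \ref{P:Contractions} for existence and Corollary \ref{C:Independence} plus Lemma \ref{L:Normality} for uniqueness; and prestability from the fact that a $\Z$-stable curve is a contraction of a stable curve. One technical point you skip is that before invoking the valuative criterion you must know $\SM_{g,n}(\Z)$ is of \emph{finite type} over $\Spec\mathbb{Z}$, not merely locally so (openness in $\V_{g,n}$ only gives the latter); the paper handles this by bounding the number of irreducible components of a $\Z$-stable curve and appealing to Corollary \ref{C:Stackgne}. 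Also, your sentence about ``a rational component arising as $\phi(Z)$'' is garbled, since $\phi(Z)$ is a point, not a component; the clean statement is that every component of the normalization $\tilde C$ is already a component of $\tilde{C^s}$ and retains at least three distinguished points under the contraction.
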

In Section \ref{S:Openness}, we will show that $\SM_{g,n}(\Z) \subset \SV_{g,n}$ is Zariski-open. Thus, $\SM_{g,n}(\Z)$ inherits the structure of an algebraic stack, locally of finite-type over $\Spec \mathbb{Z}$. Since a $\Z$-stable curve has no more irreducible components than an $n$-pointed stable curve of arithmetic genus $g$, the moduli problem of $\Z$-stable curves is bounded (see Corollary \ref{C:Stackgne}). Thus, $\SM_{g,n}(\Z)$ inherits the structure of an algebraic stack of finite-type over $\Spec \mathbb{Z}$, and we may use the valuative criterion to check that $\SM_{g,n}(\Z)$ is proper over $\Spec \mathbb{Z}$. This is accomplished in Section \ref{S:Properness}. It follows that $\SM_{g,n}(\Z)$ is a modular compactification of $\M_{g,n}$. To see that $\SM_{g,n}(\Z)$ is a stable modular compactification, simply observe that any $\Z$-stable curve is obviously prestable, since it is obtained as a contraction from a stable curve.

\subsection{$\SM_{g,n}(\Z)$ is open in $\V_{g,n}$}\label{S:Openness}

\begin{lemma}\label{L:Rigidity}
Suppose we have a diagram
\[
\xymatrix{
\C^{s} \ar[rr]^{\phi} \ar[dr]&&\C \ar[ld]\\
&T \ar@/^1pc/[lu]^{\{\sigma^s_i\}_{i=1}^{n}} \ar@/_1pc/[ru]_{\{\sigma_i\}_{i=1}^{n}} &
}
\]
satsifying:
\begin{itemize}
\item[(1)] $T$ is a noetherian scheme.
\item[(2)] $(\C^{s} \rightarrow T, \{\sigma_{i}^{s}\}_{i=1}^{n})$ is a stable curve, and  $(\C \rightarrow T, \sigman)$ an arbitrary curve.
\item[(3)]  $\phi:\C^{s} \rightarrow \C$  is a birational morphism.
\end{itemize}
Then the set
\[
S:=\{t \in T \, |\, \Exc(\phi_{\overline{t}})=\Z(C_{\overline{t}})\}
\]
is open in $T$.
\end{lemma}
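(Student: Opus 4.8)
The plan is to deduce that $S$ is open from the fact that it is \emph{constructible} and \emph{stable under generization}; since $T$ is locally Noetherian, these two properties together force openness (\cite{EGAIV}). The key geometric input will be Axiom 3 of Definition \ref{D:Assignment}, which is used only in the generization step.

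To set up, the stable curve $\C^{s}\to T$ induces a morphism $T\to\SM_{g,n}$, and pulling back the stratification $\SM_{g,n}=\coprod_{G}\M_{G}$ by topological type yields a finite decomposition $T=\coprod_{G}T_{G}$ into locally closed subschemes, where $T_{G}$ parametrizes fibers with dual graph $G$. Over a suitable finite \'etale cover $T_{G}'\to T_{G}$ the irreducible components of $\C^{s}|_{T_{G}'}$ are globally labeled by the vertices $v\in V(G)$, giving subfamilies $\C^{s}_{v}\to T_{G}'$; for a geometric point $\bar t$ of $T_{G}'$ set $E(\bar t):=\{v\in V(G):\phi\text{ contracts }\C^{s}_{v,\bar t}\}$, so that $\Exc(\phi_{\bar t})$ is the union of the $\C^{s}_{v,\bar t}$ with $v\in E(\bar t)$ and $\Z(C^{s}_{\bar t})$ is the union of those indexed by $\Z(G)$. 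For constructibility I would show that $E(-)$ is locally constant on $T_{G}'$: for each $v$ the set $\{t:\phi\text{ contracts }\C^{s}_{v,t}\}=\{t:\dim\phi(\C^{s}_{v,t})=0\}$ is open by upper semicontinuity of the fiber dimension of $\phi(\C^{s}_{v})\to T_{G}'$, and its complement is open too --- if $\phi$ does not contract $\C^{s}_{v,t_{0}}$, choose smooth points $x_{0},y_{0}$ of that fiber with $\phi(x_{0})\neq\phi(y_{0})$, extend to sections of $\C^{s}_{v}$ over an \'etale neighborhood, and note that the locus where the two induced sections of $\C$ remain distinct is an open neighborhood of $t_{0}$ along which $\phi$ does not contract $\C^{s}_{v}$. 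Hence $E(-)$ is locally constant and $\{t\in T_{G}':E(\bar t)=\Z(G)\}$ is clopen; since $\Z(G)$ is $\Aut(G)$-invariant (Axiom 2) this condition is independent of the labeling, so it descends to a constructible subset of $T_{G}$ and therefore of $T$. Summing over the finitely many $G$ shows $S$ is constructible.

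For stability under generization --- the crux --- take $t'\in S$ and a generization $t$, choose $\Delta\to T$ with $\eta\mapsto t$, $0\mapsto t'$, pull back the diagram, and after a finite base change write $\C^{s}_{\Delta}=\C^{s}_{1}\cup\cdots\cup\C^{s}_{m}$ with each $\C^{s}_{i}\to\Delta$ flat and geometrically irreducible in the generic fiber, realizing a specialization $G=G_{\bar t}\leadsto G_{\bar t'}=G'$ with $v_{i}\leadsto v_{i1}'\cup\cdots\cup v_{i,k_{i}}'$ (here $v_{i}\leftrightarrow\C^{s}_{i}$ and $v_{ij}'\leftrightarrow$ the components of $(\C^{s}_{i})_{0}$). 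The local point is the dimension count: writing $Y_{i}:=\phi(\C^{s}_{i})\subseteq\C_{\Delta}$, one has that $\phi$ contracts $(\C^{s}_{i})_{\bar\eta}$ iff $\dim(Y_{i})_{0}=0$ iff $\{v_{ij}'\}_{j}\subseteq E(\bar t')$. Indeed $Y_{i}$ is irreducible and dominates $\Delta$; if $(Y_{i})_{\bar\eta}$ is a point then $Y_{i}$ is $1$-dimensional, so its special fiber is $0$-dimensional, whereas if $(Y_{i})_{\bar\eta}$ were $1$-dimensional then $Y_{i}$ would be $2$-dimensional and its special fiber, cut out in $Y_i$ by the uniformizer, would have dimension $\geq 1$ by Krull's theorem. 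Now every vertex of $G$ is some $v_{i}$ and every vertex of $G'$ is some $v_{ij}'$, so combining the dimension count with the hypothesis $E(\bar t')=\Z(G')$ and Axiom 3 yields
\[
v_{i}\in E(\bar\eta)\iff\{v_{ij}'\}_{j}\subseteq\Z(G')\iff v_{i}\in\Z(G)
\]
for every $i$; hence $E(\bar\eta)=\Z(G)$, i.e. $\Exc(\phi_{\bar t})=\Z(C^{s}_{\bar t})$, so $t\in S$.

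Being constructible and stable under generization, $S$ is open. I expect the main obstacle to be the constructibility step --- establishing cleanly that ``$\phi$ contracts $\C^{s}_{v}$'' is simultaneously open and closed on each stratum, via the \'etale trivialization of the combinatorial type together with the two-sided semicontinuity argument. By contrast the generization step, which is the conceptual heart and the only place Axiom 3 enters, is short once the dimension count above is in hand.
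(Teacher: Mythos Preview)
Your proof is correct and follows the same two-step strategy as the paper: constructibility via stratification by dual graph and local constancy of the contraction pattern on each stratum, then stability under generization via a DVR argument combined with Axiom~3. The only cosmetic difference is that where you give explicit semicontinuity and Krull-dimension arguments, the paper packages both the stratum-wise local constancy and the DVR ``iff'' as consequences of the classical rigidity lemma.
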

\begin{proof}
Since $T$ is noetherian, it suffices to prove that $S$ is constructible and stable under generalization. First, we show that $S$ is constructible. There exists a finite stratification of $T$ into locally-closed subschemes over which the dual graph of the fibers of $\pi$ are constant, i.e. we have
$
T=\coprod_{G}T_{G},$ where $T_{G}:=\M_{G} \times_{\SM_{g,n}} T.$
We will prove that $T$ is a finite union of the subschemes $T_{G}$ by showing that
\[ \Exc(\phi_{\overline{t}})=\Z(C_{\overline{t}}) \text{ for one point $t \in T_{G}$} \implies
\Exc(\phi_{\overline{t}})=\Z(C_{\overline{t}})\text{ for all points $t \in T_{G}$.}
\]
There exists a finite surjective map $\tilde{T}_{G} \rightarrow T_{G}$, such that
$$
\C^{s} \times _{T} \tilde{T}_G=\C_{1}^{s} \cup \ldots \cup \C_{k}^{s},
$$
where each $\C_{i}^{s} \rightarrow \tilde{T}_{G}$ is a smooth proper curve, and it is enough to prove that 
\[ \Exc(\tilde{\phi}_{\overline{t}})=\Z(C_{\overline{t}}) \text{ for one point $t \in \tilde{T}_{G}$} \implies
\Exc(\tilde{\phi}_{\overline{t}})=\Z(C_{\overline{t}})\text{ for all points $t \in \tilde{T}_{G}$,}
\]
where $\tilde{\phi}:\C^{s} \times _{T} \tilde{T}_G \rightarrow \C \times _{T} \tilde{T}_G$ is the birational morphism induced by $\phi$.
Since each $\C_{i}^{s} \rightarrow \tilde{T}_{G}$ is smooth and proper, the rigidity lemma implies that
$$
(\C^{s}_{i})_{\overline{t}} \subset \Exc(\tilde{\phi}_{\overline{t}}) \text{ for one point $t \in \tilde{T}_{G}$} \implies
(\C^{s}_{i})_{\overline{t}} \subset \Exc(\tilde{\phi}_{\overline{t}}) \text{ for all points $t \in \tilde{T}_{G}$.}
$$
On the other hand, since the dual graph of the fibers of $\C^{s} \times_T \tilde{T}_{G} \rightarrow \tilde{T}_G$ is constant, we have
$$
(\C^{s}_{i})_{\overline{t}} \subset \Z(C_{\overline{t}}) \text{ for one point $t \in \tilde{T}_{G}$} \implies
(\C^{s}_{i})_{\overline{t}} \subset \Z(C_{\overline{t}}) \text{ for all points $t \in \tilde{T}_{G}$.}
$$
It follows that
$$ \Exc(\phi_{\overline{t}})=\Z(C_{\overline{t}}) \text{ for one point $t \in \tilde{T}_{G}$} \implies
\Exc(\phi_{\overline{t}})=\Z(C_{\overline{t}})\text{ for all points $t \in \tilde{T}_{G}$,}
$$
as desired.

Next, we show that $S$ is stable under generization. If $s, t \in T$ satisfy $s \in \overline{\{t\}}$, there exists a map $\Delta \rightarrow T$, sending $\eta \rightarrow t$,  $0 \rightarrow s$, inducing a diagram
\[
\xymatrix{
\C^{s} \ar[dr]\ar[rr]^{\phi} & &\C \ar[ld]\\
&\Delta&\\
}
\]
We wish to show that
\[\Exc(C^{s}_{0})=\Z(C^{s}_0) \implies \Exc(C^{s}_{\overline{\eta}})=\Z(C^{s}_{\overline{\eta}}).
\]
As with step 1, this is an elementary application of the rigidity lemma. After a finite base-change, we may assume that the irreducible components of $\C^{s}$, say $\C^{s}=\C^{s}_1 \cup \ldots \cup \C^{s}_{k}$ are in bijective correspondence with the irreducible components of $\C^{s}_{\overline{\eta}}$. By Definition \ref{D:Assignment} (3), we have
\[(\C^{s}_i)_0 \subset \Z(\C^{s}_0) \iff (\C^{s}_i)_{\overline{\eta}} \subset \Z(\C^{s}_{\overline{\eta}})\]
On the other hand, since each $\C_{i} \rightarrow \Delta$ is flat and proper with irreducible generic fiber, the rigidity lemma implies that
\[(\C^{s}_i)_0 \subset \Exc(\phi_0) \iff (\C^{s}_i)_{\overline{\eta}} \subset \Exc(\phi_{\overline{\eta}}).\]
We conclude that $\Exc(\C^{s}_{0})=\Z(\C^{s}_0) \implies \Exc(\C^{s}_{\overline{\eta}})=\Z(\C^{s}_{\overline{\eta}})$ as desired.
\end{proof}

\begin{theorem}
$\SM_{g,n}(\Z) \subset \SV_{g,n}$ is an open substack.
\end{theorem}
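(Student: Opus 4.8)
The plan is to assemble Lemmas~\ref{L:ExtendingCurves}, \ref{L:BirationalBaseChange} and \ref{L:Rigidity} together with Corollary~\ref{C:Independence}, after first reducing to a generically-smooth family over a nice base. To show $\SM_{g,n}(\Z)\subset\V_{g,n}$ is open it suffices to show it is a neighbourhood of each of its points $x$. Choose a smooth morphism $B\to\V_{g,n}$ from a scheme with a point $b_{0}\mapsto x$; since smooth morphisms are open, it is enough to show the preimage of $\SM_{g,n}(\Z)$ in $B$ is a neighbourhood of $b_{0}$. As $B$ is smooth over $\Spec\mathbb{Z}$ it is regular, so $\O_{B,b_{0}}$ is a domain and, shrinking $B$, we may assume $B$ is integral; then $B\to\V_{g,n}$ is open, hence dominant onto the irreducible stack $\V_{g,n}$, whose generic point parametrizes a smooth curve, so the $n$-pointed curve $(\C\to B,\sigman)$ pulled back from the universal curve has smooth generic fibre. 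Thus we are reduced to proving: for a generically-smooth $n$-pointed curve $(\C\to T,\sigman)$ over an integral Noetherian scheme $T$, the locus $S:=\{\,t\in T\mid(\C_{\overline t},\{\sigma_{i}(\overline t)\}_{i=1}^{n})\text{ is }\Z\text{-stable}\,\}$ is open in $T$; and since an alteration is a closed surjection, hence a topological quotient map, we may freely replace $T$ by an alteration.

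Next I would invoke Lemma~\ref{L:ExtendingCurves}: after an alteration we may assume $T$ is normal and that there is a stable curve $(\C^{s}\to T,\{\sigma^{s}_{i}\}_{i=1}^{n})$ together with a birational $T$-map $\phi\colon\C^{s}\dashrightarrow\C$ whose regular locus $S_{0}:=\{\,t\in T\mid\phi\text{ is regular near }\C^{s}_{t}\,\}$ is open and contains every geometric point $t$ for which $(\C_{\overline t},\{\sigma_{i}(\overline t)\}_{i=1}^{n})$ is prestable. Restricting the diagram over $S_{0}$ --- which is open, irreducible and normal, and contains $\eta_{T}$ because $\C_{\eta_{T}}$ is smooth, hence stable, so $\phi$ is an isomorphism near $\C^{s}_{\eta_{T}}$ --- turns $\phi$ into an honest birational morphism $\phi\colon\C^{s}|_{S_{0}}\to\C|_{S_{0}}$ of generically-smooth curves over $S_{0}$, with $\C^{s}|_{S_{0}}$ nodal.

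By Lemma~\ref{L:BirationalBaseChange}, the induced map on each geometric fibre $\phi_{\overline t}\colon C^{s}_{\overline t}\to\C_{\overline t}$, $\overline t\in S_{0}$, is a contraction of curves (Definition~\ref{D:BirationalMap}) from a stable curve. Hence, for $\overline t\in S_{0}$, Definition~\ref{D:ZStable} together with Corollary~\ref{C:Independence} gives
\[
(\C_{\overline t},\{\sigma_{i}(\overline t)\}_{i=1}^{n})\text{ is }\Z\text{-stable}\iff\Exc(\phi_{\overline t})=\Z(C^{s}_{\overline t}),
\]
the forward implication being tautological from the equality (it exhibits $\phi_{\overline t}$ as a witnessing contraction), and the reverse one holding because a $\Z$-stable curve is prestable, so by Corollary~\ref{C:Independence} any contraction from a stable curve --- in particular $\phi_{\overline t}$ --- has exceptional locus $\Z(C^{s}_{\overline t})$. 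Since $\Z$-stability implies prestability, $S\subseteq S_{0}$, whence $S=\{\,t\in S_{0}\mid\Exc(\phi_{\overline t})=\Z(C^{s}_{\overline t})\,\}$. By Lemma~\ref{L:Rigidity}, applied with base $S_{0}$, this set is open in $S_{0}$, hence open in $T$; descending along the alteration, $S$ is open in the original $T$, which completes the reduction.

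The genuinely substantive inputs --- Artin's contractibility criterion and stable reduction behind Lemma~\ref{L:ExtendingCurves}, the fibrewise behaviour of birational contractions in Lemma~\ref{L:BirationalBaseChange}, and the constructibility-plus-generization argument driven by the rigidity lemma and axiom~(3) of an extremal assignment in Lemma~\ref{L:Rigidity} --- are already in place, so the present theorem is essentially their synthesis. I expect the only delicate point in that synthesis to be the passage from a rational map to an honest morphism: one must know that $\phi$ is regular \emph{exactly} where it is needed, namely on a neighbourhood of every prestable (in particular every $\Z$-stable) fibre, so that the identification of $S$ with $\{\,t\in S_{0}\mid\Exc(\phi_{\overline t})=\Z(C^{s}_{\overline t})\,\}$ is legitimate --- this is precisely the content of Lemma~\ref{L:ExtendingCurves}(2). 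One should also keep in mind that the reduction in the first paragraph uses the irreducibility of $\V_{g,n}$, and the fact that its generic point parametrizes a smooth curve, in an essential way.
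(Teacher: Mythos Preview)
Your proof is correct and follows essentially the same approach as the paper's: reduce to a generically-smooth family over an integral noetherian base, pass to an alteration where a stable model and birational map $\phi$ exist (Lemma~\ref{L:ExtendingCurves}), restrict to the open locus where $\phi$ is regular (which contains all prestable, hence all $\Z$-stable, fibres), then combine Lemma~\ref{L:BirationalBaseChange}, Corollary~\ref{C:Independence}, and Lemma~\ref{L:Rigidity} exactly as you do. One trivial slip: in your biconditional you have the labels ``forward'' and ``reverse'' swapped --- the implication $\Exc(\phi_{\overline t})=\Z(C^{s}_{\overline t})\Rightarrow\Z$-stable is the one that is tautological from the definition, while $\Z$-stable $\Rightarrow\Exc(\phi_{\overline t})=\Z(C^{s}_{\overline t})$ is the one requiring Corollary~\ref{C:Independence} --- but the content of both directions is stated correctly.
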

\begin{proof}
Since $\SV_{g,n}$ is an algebraic stack, irreducible and locally of finite-type over $\Spec \mathbb{Z}$, there exists a smooth atlas $T \rightarrow \SV_{g,n}$, where $T$ is a scheme, irreducible and locally of finite-type over $\Spec \mathbb{Z}$. If $(\C \rightarrow T, \sigman)$ denotes the corresponding $n$-pointed curve over $T$, then the required statement is that
\[
S:=\{t \in T \, | \, (\C_{\overline{t}}, \{\sigma_i(\overline{t})\}_{i=1}^{n})\text{ is $\Z$-stable} \}
\]
is open in $T$. Since this is local on $T$, we may assume that $T$ is irreducible and of finite-type over $\Spec \mathbb{Z}$.

Note that if $p: \tilde{T} \rightarrow T$ is any proper surjective morphism of schemes, and $(\tilde{\C} \rightarrow \tilde{T}, \{\tilde{\sigma}_i\}_{i=1}^{n})$ is the family obtained by pull-back, then it is sufficient to show that 
\[
\tilde{S}:=\{t \in \tilde{T} \, | \, (\tilde{\C}_{\overline{t}}, \{\tilde{\sigma}_i(\overline{t})\}_{i=1}^{n})\text{ is $\Z$-stable} \}
\]
is open in $\tilde{T}$. By Lemma \ref{L:ExtendingCurves}, there exists an alteration $\tilde{T} \rightarrow T$, and a diagram
\[
\xymatrix{
\C^{s} \ar@{-->}^{\phi}[rr] \ar[dr] && \tilde{\C} \ar[dl]\\
&\tilde{T}  \ar@/^1pc/[lu]^{\{\sigma^s_i\}_{i=1}^{n}} \ar@/_1pc/[ru]_{\{\tilde{\sigma}_i\}_{i=1}^{n}}&
}
\]
satisfying
\begin{enumerate}
\item $\tilde{T}$ is a normal noetherian scheme.
\item $(\C^{s}, \{\sigma^s_i\}_{i=1}^{n})$ is a stable curve.
\item $\phi$ is regular over the locus $\{t \in \tilde{T} |\text{ $\tilde{C}_{t}, \tilde{\sigma}(t)$ is prestable }\}$.
\end{enumerate}

In particular, since every $\Z$-stable curve is prestable, $\tilde{S}$ is contained in the open set 
$$U:=\{ t \in \tilde{T} | \text{ $\phi$ is regular in a neighborhood of the fiber $\C^{s}_t$ }\}.$$

Thus, we may replace $\tilde{T}$ by $U$ and assume that $\phi$ is regular. By Lemma \ref{L:BirationalBaseChange}, the restriction of $\phi$ to each fiber is a contraction of curves. Thus, by Corollary \ref{C:Independence},
$$
\tilde{S}=\{t \in \tilde{T} \,\,|\,\, \Exc(\phi_{t})=\Z(C^{s}_{t}) \}.
$$
By Lemma \ref{L:Rigidity}, this set is open.
\end{proof}
\subsection{$\SM_{g,n}(\Z)$ is proper over $\Spec \mathbb{Z}$}\label{S:Properness}
To show that $\SM_{g,n}(\Z)$ is proper, it suffices to verify the valuative criterion for discrete valuation rings with algebraically closed residue field, whose generic point maps into the open dense substack $\M_{g,n} \subset \SM_{g,n}(\Z)$ (\cite{LMB}, Chapter 7).

\begin{theorem}[Valuative Criterion for Properness of $\SM_{g,n}(\Z)$]
Let $\Delta$ be the spectrum of a discrete valuation ring with algebraically closed residue field.
\begin{itemize}
\item[(1)] (Existence of $\Z$-stable limits) If $(\C,\sigman )|_{\eta}$ is a smooth $n$-pointed curve over $\eta$, there exists a finite base-change $\Delta' \rightarrow \Delta$, and a $\Z$-stable curve $(\C' \rightarrow \Delta', \sigmanprime)$, such that
$$(\C', \sigmanprime )|_{\eta'} \simeq (\C,\sigman)|_{\eta} \times_{\eta} \eta'.$$
\item[(2)] (Uniqueness of $\Z$-stable limits) Suppose that $(\C \rightarrow \Delta,\sigman)$  and $(\C' \rightarrow \Delta, \sigmanprime)$ are $\Z$-stable curves with smooth generic fiber. Then any isomorphism over the generic fiber
$$(\C,\sigman)|_{\eta} \simeq (\C',\sigmanprime)|_{\eta}$$ extends to an isomorphism over $\Delta$:
$$(\C,\sigman) \simeq (\C',\sigmanprime).$$
\end{itemize}
\end{theorem}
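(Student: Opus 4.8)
The plan is to establish the two halves of the criterion separately, in each case bootstrapping from the preliminary results of Section~\ref{S:Preliminaries}.

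For \textbf{existence} I would start from the smooth curve over $\eta$ and apply the classical Deligne--Mumford stable reduction theorem: after a finite base-change $\Delta' \to \Delta$ there is a stable curve $(\C^s \to \Delta', \sigmans)$ with $(\C^s,\sigmans)|_{\eta'} \simeq (\C,\sigman)|_\eta \times_\eta \eta'$. Let $C^s$ be the special fiber. By axiom~(1) of Definition~\ref{D:Assignment} the subcurve $\Z(C^s) \subsetneq C^s$ is proper, so Proposition~\ref{P:Contractions} (legitimately, since a stable curve is nodal and $\C^s \to \Delta'$ is generically smooth) yields a proper birational morphism $\phi\colon \C^s \to \C'$ over $\Delta'$ with $\phi_*\O_{\C^s}=\O_{\C'}$ and $\Exc(\phi)=\Z(C^s)$, such that $\C' \to \Delta'$ is flat with geometrically reduced connected fibers and $\phi_0\colon C^s \to C'_0$ is a contraction of curves. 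The sections $\phi\circ\sigma^s_i$ turn $\C' \to \Delta'$ into an $n$-pointed curve; because $\Z(C^s)$ lies in the special fiber, $\phi$ restricts to an isomorphism over $\eta'$, so $\C' \to \Delta'$ is a family with smooth generic fiber, whence its special fiber defines a point of $\V_{g,n}$ and is in particular smoothable. Since $\Exc(\phi_0)=\Z(C^s)$, Definition~\ref{D:ZStable} shows $C'_0$ is $\Z$-stable, so $(\C' \to \Delta', \{\phi\circ\sigma^s_i\})$ is the desired $\Z$-stable limit.

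For \textbf{uniqueness}, fix the generic isomorphism $f$. First I would note that passing to a finite base-change $\Delta'$ is harmless: an extension of $f$ over $\Delta'$ is unique (two extensions to the separated family $\C' \to \Delta$ agree on the schematically dense $\eta'$), hence compatible with descent data and descends. Now $\C \to \Delta$ and $\C' \to \Delta$ have $\Z$-stable, in particular prestable, fibers, so Lemma~\ref{L:ExtendingCurves} provides (after a finite base-change) stable curves dominating $\C$, resp.\ $\C'$, through \emph{regular} birational morphisms; combining these, passing to a common finite base-change $\Delta'$, and invoking the separatedness of $\SM_{g,n}$ to identify the two resulting stable models over $\Delta'$, I may assume there is a single stable curve $\C^s \to \Delta'$ with birational morphisms $\phi_1\colon \C^s \to \C_{\Delta'}$ and $\phi_2\colon \C^s \to \C'_{\Delta'}$ over $\Delta'$ such that $\phi_2\circ\phi_1^{-1}$ restricts to $f$ on the generic fiber and all maps respect the sections. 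By Lemma~\ref{L:Normality}(2) both are Stein, and by Lemma~\ref{L:BirationalBaseChange} their restrictions to the special fiber $C^s$ are contractions of curves; since $\C_{\Delta'}$ and $\C'_{\Delta'}$ are $\Z$-stable, Corollary~\ref{C:Independence} forces $\Exc(\phi_{1,0}) = \Z(C^s) = \Exc(\phi_{2,0})$, and as the exceptional loci sit in the special fibers, $\Exc(\phi_1) = \Exc(\phi_2) = \Z(C^s)$. Thus $\phi_1,\phi_2$ are two proper birational Stein morphisms out of $\C^s$ collapsing each connected component of $\Z(C^s)$ to a (necessarily distinct) point and isomorphisms elsewhere; the contraction of a fixed subcurve of a normal surface being unique up to unique isomorphism, there is an isomorphism $\psi\colon \C_{\Delta'} \simeq \C'_{\Delta'}$ with $\psi\circ\phi_1 = \phi_2$. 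This $\psi$ respects the sections and restricts to $f$ on the generic fiber, and therefore descends to the required isomorphism $(\C,\sigman) \simeq (\C',\sigmanprime)$ over $\Delta$.

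The substantive geometric inputs---the construction of the contraction (Proposition~\ref{P:Contractions}, via Artin's criterion) and the rigidity of extremal assignments in families (Corollary~\ref{C:Independence})---are already available, so I expect the main obstacle to be organizational: producing a single stable curve that dominates \emph{both} $\Z$-stable models after a finite base-change, and extracting from ``equal exceptional loci plus Stein'' the conclusion that the two targets are canonically isomorphic. The latter is the point at which one must argue with care, using that the two components of a disconnected subcurve cannot be collapsed to the same point by a Stein morphism, together with the uniqueness of Artin contractions.
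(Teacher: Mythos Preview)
Your proof is correct and follows essentially the same strategy as the paper: existence via stable reduction plus Proposition~\ref{P:Contractions}, uniqueness via a common stable model, Corollary~\ref{C:Independence}, and normality. The only tactical difference is in how you produce the common stable model for uniqueness: rather than applying Lemma~\ref{L:ExtendingCurves} to each family separately and then invoking separatedness of $\SM_{g,n}$ to identify the two stable curves, the paper applies semistable reduction directly to the graph of the rational map $\C \dashrightarrow \C'$, obtaining a single nodal curve $\C^n$ mapping to both, and then observes that any unstable rational component of $C^n_0$ is contracted by both maps (since the targets are prestable), so $\C^n$ may be replaced by its stable model---this neatly sidesteps the organizational issue you flag at the end.
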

\begin{proof}
To prove existence of limits, start by applying the stable reduction theorem to $(\C,\sigman )|_{\eta}$.  There exists a finite base-change $\Delta' \rightarrow \Delta$, and a Deligne-Mumford stable curve $(\pi: \C^{s} \rightarrow \Delta',\sigmanprime)$ such that
$$
(\C^{s}, \sigmans)|_{\eta'} \simeq (\C,\sigman) \times_{\eta} \eta'.
$$
For notational simplicity, we will continue to denote our base by $\Delta$. By Proposition \ref{P:Contractions}, there exists a birational morphism over $\Delta$:
\[
\xymatrix{
\C^{s} \ar[rr]^{\phi} \ar[dr]&&\C \ar[dl]\\
&\Delta&
}
\]
such that
\begin{enumerate}
\item $(\C \rightarrow \Delta, \sigman)$ is a flat family of $n$-pointed curves,
\item $\phi$ is proper birational with $\Exc(\phi)=\Z(C^{s}_0)$,
\item $\phi_{0}:C^{s}_0 \rightarrow C_0$ is a contraction of curves.
\end{enumerate}
Properties (2) and (3) imply that the special fiber $(C_0, \{\sigma_i(0)\}_{i=1}^{n})$ is $\Z$-stable, so $(\C \rightarrow \Delta, \sigman)$ is the desired $\Z$-stable family.\\

To prove uniqueness of limits, we must show that a rational isomorphism
$$
(\C,\sigman)|_{\eta} \simeq (\C',\sigmanprime)|_{\eta}
$$
between two families of $\Z$-stable curves extends to an isomorphism over $\Delta$. It suffices to check that the rational map $\C \dashrightarrow \C'$ extends to an isomorphism after a finite base-change. Thus, applying semistable reduction to the graph of this rational map, we may assume there exists a nodal curve $(\C^{n} \rightarrow \Delta, \taun)$ and a diagram
\[
\xymatrix{
&(\C^{n}, \taun) \ar[rd]^{\phi'} \ar[ld]_{\phi} &\\
(\C,\sigman)& &(\C', \sigmanprime)
}
\]
where $\phi$ and $\phi'$ are proper birational morphisms over $\Delta$. In fact, we may further assume that $(\C^{n} \rightarrow \Delta, \taun)$ is stable. Indeed, any rational component $E \subset C^{n}_{0}$ with only one or two distinguished points must be contracted by both $\phi$ and $\phi'$ since $C_0$ and $C_0'$ are both prestable. Thus, $\phi$ and $\phi'$ both factor through the stable reduction $\C^{n} \rightarrow \C^{s}$, and we may replace by $\C^{n}$ by $\C^{s}$.

Now consider the restriction of $\phi$ and $\phi'$ to the special fiber. By Lemma \ref{L:BirationalBaseChange}, $\phi_{0}:C^{s}_0 \rightarrow C_0$ and $\phi_{0}':C^{s}_0 \rightarrow C_0'$ are both contractions of curves. Furthermore, since $C_{0}$ and $C_{0}'$ are both $\Z$-stable, Corollary \ref{C:Independence} implies that $\Exc(\phi)=\Exc(\phi')=\Z(C^{s}_0)$. Since $\C$ and $\C'$ are normal (Lemma \ref{L:Normality}), this implies $\C \simeq \C'$ as desired.
\end{proof}

\section{Classification of stable modular compactifications of $\M_{g,n}$}\label{S:Classification}

In this section, we prove the following theorem.

\begin{theorem}[Classification of Stable Modular Compactifications]\label{T:Classification}
Suppose $\X \subset \V_{g,n}$ is a stable modular compactification of $\M_{g,n}$. Then there exists an extremal assignment $\Z$ over $\SM_{g,n}$, such that $\X=\SM_{g,n}(\Z)$.
\end{theorem}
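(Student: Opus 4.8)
The plan is to follow the outline of Section~\ref{S:Outline}. I would begin by invoking Lemma~\ref{L:Diagram}, which from the stable modular compactification $\X$ produces a normal scheme $T$, a stable curve $(\C^{s}\to T,\sigmans)$, a curve $(\C\to T,\sigman)$, and a birational morphism $\phi\colon\C^{s}\to\C$ over $T$, fitting into a diagram in which $p\colon T\to\SM_{g,n}$ and $q\colon T\to\X$ are representable, proper, dominant and generically \'etale, and $\C^{s}$, $\C$ are the families induced by $p$ and $q$. Since $T$ is normal, $\pi^{s}$ is nodal, and $\pi^{s}$ and $\pi$ are generically smooth, Lemma~\ref{L:BirationalBaseChange} applies and shows that each restriction $\phi_{t}\colon C^{s}_{t}\to C_{t}$ is a contraction of curves.

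Next I would define the candidate assignment. For each dual graph $G$ of an $n$-pointed stable curve of genus $g$ put $T_{G}:=\M_{G}\times_{\SM_{g,n}}T$, the locally closed subscheme over which the fibers of $\pi^{s}$ have dual graph $G$; for $t\in T_{G}$, with dual graph $G_{t}$, and a chosen isomorphism $i\colon G_{t}\simeq G$, set $\Z(G):=i(\Exc(\phi_{t}))$, viewed as a set of vertices of $G$. The heart of the proof is Proposition~\ref{P:ExceptionalLocus}, that this subset is independent of the chosen $t\in T_{G}$ and of $i$. Independence of $t$ is proved just as in Lemma~\ref{L:Rigidity}: after a finite surjective base-change making the irreducible components of $\C^{s}$ globally defined over $T_{G}$, the rigidity lemma shows that $\{t:(\C^{s}_{j})_{t}\subset\Exc(\phi_{t})\}$ is constructible and stable under generization on $T_{G}$, hence open and closed, hence locally constant; comparing distinct components of $T_{G}$ lying over one stratum of $\SM_{g,n}$ uses the generic \'etaleness of $p$ together with the separatedness of $\X$. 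Independence of $i$ --- equivalently the $\Aut(G)$-invariance that is Axiom~2 of Definition~\ref{D:Assignment} --- follows from the same mechanism and, as the outline notes, is forced by the separatedness of $\X$.

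It remains to check Axioms~1 and~3. Axiom~1 is immediate: $\phi$ is birational, so it cannot contract an entire fiber of $\C^{s}\to T$, hence $\Z(G)\neq G$. For Axiom~3, given $G\leadsto G'$ witnessed by a one-parameter family $(\C^{s}\to\Delta,\sigmans)$ of stable curves, properness of $p$ lets us lift the induced map $\Delta\to\SM_{g,n}$ to $T$; pulling back the diagram and, after a finite base-change, writing $\C^{s}=\C_{1}\cup\cdots\cup\C_{m}$ with each $\C_{j}\to\Delta$ having smooth generic fiber, the rigidity lemma gives $(\C_{j})_{\overline{\eta}}\subset\Exc(\phi_{\overline{\eta}})\iff(\C_{j})_{0}\subset\Exc(\phi_{0})$, which is exactly the equivalence $v\in\Z(G)\iff v_{1}',\dots,v_{k}'\in\Z(G')$. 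Thus $\Z$ is an extremal assignment over $\SM_{g,n}$.

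Finally, by Lemma~\ref{L:BirationalBaseChange} and the well-definedness of $\Z$, every geometric fiber $C_{t}$ of $\pi$ is $\Z$-stable with $\Exc(\phi_{t})=\Z(C^{s}_{t})$; since $q\colon T\to\X$ is surjective, every geometric point of $\X$ is $\Z$-stable, so the open immersion $\X\hookrightarrow\V_{g,n}$ factors through an open immersion $\X\hookrightarrow\SM_{g,n}(\Z)$. As $\X$ is proper over $\Spec\mathbb{Z}$ and $\SM_{g,n}(\Z)$ is separated over $\Spec\mathbb{Z}$ (Theorem~\ref{T:Construction}), this open immersion is proper, so its image is open and closed in the irreducible $\SM_{g,n}(\Z)$ and nonempty; hence $\X=\SM_{g,n}(\Z)$. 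I expect the main obstacle to be Proposition~\ref{P:ExceptionalLocus}: reconciling the two ambiguities --- the point of $T_{G}$, which may lie in several components over one $\SM_{g,n}$-stratum, and the graph isomorphism $i$ --- is where the generic \'etaleness of $p$ and, crucially, the separatedness of $\X$ genuinely enter, and it is the only place the hypothesis that $\X$ is \emph{separated} (rather than merely deformation-open) is really used.
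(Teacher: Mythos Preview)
Your proposal follows the paper's approach exactly: invoke Lemma~\ref{L:Diagram}, define $\Z(G)$ via $\Exc(\phi_t)$, establish well-definedness via Proposition~\ref{P:ExceptionalLocus}, verify the three axioms, and conclude by comparing two proper open substacks of $\V_{g,n}$. Your treatment of Axioms~1 and~3 and your final paragraph match the paper essentially verbatim.

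Where your sketch is thinner than the paper is, as you anticipate, Proposition~\ref{P:ExceptionalLocus}, and in two specific places. For part~(a), comparing $\Exc(\phi_t)$ across \emph{different} connected components of a fiber $T_x$ needs more than ``generic \'etaleness of $p$ together with separatedness of $\X$'': the paper proves a simultaneous-lifting lemma (Lemma~\ref{L:LiftableMap}) producing maps $\tilde u_i\colon\Delta\to T$ that hit each component of $T_x$ while all lifting the \emph{same} $u\colon\Delta\to\SM_{g,n}$, and then invokes full \emph{properness} of both $\SM_{g,n}$ and $\X$ (not merely separatedness) to extend the generic-fiber isomorphisms $(\C^s_i)_\eta\simeq(\C^s_j)_\eta$ and $(\C_i)_\eta\simeq(\C_j)_\eta$ over $\Delta$; commutativity of the resulting square identifies the exceptional loci. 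For part~(b), the intended contradiction to separatedness requires knowing that \emph{both} contracted curves (from $Z_1$ and from $i(Z_1)$) actually lie in $\X$; this is not automatic and is the content of Lemmas~\ref{L:LastLemma} and~\ref{L:TwistingX}, which show that any smoothable curve admitting a contraction from a stable curve whose exceptional locus matches $\Exc(\phi_t)$ under some isomorphism already lies in $\X$. Your phrase ``the same mechanism'' does not yet cover this step.
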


The starting point for the proof of Theorem \ref{T:Classification} is the following lemma, which allows us to compare an arbitrary stable modular compactification to $\SM_{g,n}$ by regularizing the rational map between their respective universal curves.

\begin{lemma}\label{L:Diagram}
Suppose that $\X \subset \V_{g,n}$ is a stable modular compactification of $\M_{g,n}$. Then there exists a diagram
\[
\xymatrix{
\C^{s} \ar[dr]^{\pi^{s}} \ar[rr]^{\phi}&&\C \ar[dl]_{\pi}\\
&T \ar[dl]_{p} \ar[dr]^{q} \ar@/_1pc/[ru]_{\sigman}  \ar@/^1pc/[lu]^{\{\sigma_i^{s} \}_{i=1}^{n}}&\\
\overline{M} \ar[d]_{i}&U\ar[d] \ar@{^{(}->}[r] \ar@{_{(}->}[l]&X \ar[d]^{j}\\
\SM_{g,n}&\U \ar@{^{(}->}[r] \ar@{_{(}->}[l]&\X\\
}
\]
satisfying
\begin{itemize}
\item[(1)] $X$, $U$, $\overline{M}$, and $T$ are irreducible normal schemes, of finite-type over $\Spec \mathbb{Z}$.
\item[(2)] $p$ and $q$ are proper birational morphisms, $i$ and $j$ are generically-\'{e}tale finite morphisms, $\pi^{s}:\C^{s} \rightarrow T$ and $\pi: \C \rightarrow T$ are the families induced by $i \circ p$ and $j \circ q$ respectively.
\item [(3)] $\U:=\M_{g,n} \cap \X$ is an open dense substack of $\X$ and $\M_{g,n}$. The lower squares are Cartesian, and the unlabelled arrows are open immersions.
\item[(4)] The rational map $\phi$, induced by the natural isomorphism between $\C^{s}$ and $\C$ over the generic point of $T$, is regular.
\end{itemize}
\end{lemma}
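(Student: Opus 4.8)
The plan is to build the diagram from the bottom up, starting with the coarse spaces and then lifting everything to a common normal scheme $T$. First I would invoke the Keel–Mori theorem to obtain coarse moduli spaces: $X$ for $\X$ (which exists and is proper over $\Spec\mathbb{Z}$ because $\X$ has quasi-finite diagonal, by the Proposition proved in the excerpt) and $\overline{M}$ for $\SM_{g,n}$. Set $\U:=\M_{g,n}\cap\X$, which is open and dense in both $\M_{g,n}$ and $\X$ since $\X$ is an irreducible compactification of $\M_{g,n}$; let $U$ be its coarse space, which embeds as an open dense subscheme in both $X$ and $\overline M$. On $\U$ the two stacks agree, so $X$ and $\overline M$ are canonically birational. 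The issue is that this birational identification is only a rational map, not a morphism, and moreover neither $X$ nor $\overline M$ need be normal, nor do they carry tautological families of curves (coarse spaces of stacks with nontrivial automorphisms do not).

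Next I would choose a normal scheme $T$ resolving the birational correspondence. Concretely: take a scheme $M\to\SM_{g,n}$ which is finite and generically étale (such a scheme exists, e.g. by adding level structure, as used in Lemma \ref{L:ExtendingCurves}), then pass to a common normalization. Precisely, let $W$ be the closure of the graph of the rational map in $M\times_{\Spec\mathbb{Z}}X$, take its normalization, and restrict attention to the irreducible component dominating $\M_{g,n}$; call the result $T$. Then $T$ is an irreducible normal scheme of finite type over $\Spec\mathbb{Z}$, equipped with projections to $M$ (hence to $\overline M=\SM_{g,n}$ via the coarse map, which I denote $i\circ p$ after factoring through $\overline M$) and to $X$ (which I factor as $j\circ q$ through $X$). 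Since $T$ maps to the scheme $M$ carrying a tautological stable curve, pulling back gives the family $\pi^s:\C^s\to T$; similarly, because $T$ maps to a suitable generically-étale cover of $\X$ — one constructs such a cover the same way, or one observes $T$ itself can be arranged to map to $\X$ — pulling back the universal $\Z$-stable curve over $\X$ gives $\pi:\C\to T$. The morphisms $p,q$ are proper (coarse moduli maps composed with projections from a closure of a graph) and birational by construction; $i,j$ are finite generically étale; the lower squares are Cartesian by construction of the fiber products; and $\U\subset\X$ is open dense by irreducibility of $\X$.

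Finally I would check (4): over the generic point of $T$ — which lies over $\M_{g,n}$, where the stack of stable curves and the stack $\X$ literally coincide — the families $\C^s$ and $\C$ are canonically isomorphic, so we get a rational map $\phi:\C^s\dashrightarrow\C$ over $T$. To see it is regular, note $\pi^s:\C^s\to T$ is a stable curve over a normal base with smooth generic fiber, so by Lemma \ref{L:Normality}(1) the total space $\C^s$ is normal; and $\C^s\to T$ is proper while $\C\to T$ is separated. A rational map from a normal scheme to a proper (here: separated, with $\C^s\to T$ proper so $\C$ can be taken proper over $T$) $T$-scheme is defined in codimension one, and — after possibly blowing up $T$ in a locus not meeting the generic fiber or, more cleanly, after invoking that the indeterminacy locus of $\phi$ maps to a proper closed subset of $T$ and using properness of $\pi^s$ — one arranges $\phi$ to be everywhere regular; concretely one replaces $T$ by resolving the graph of $\phi$ exactly as in the proof of Lemma \ref{L:ExtendingCurves}, which preserves all the properties (1)–(3) already established. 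The main obstacle is the bookkeeping in item (2): ensuring simultaneously that $T$ is normal, that it carries both tautological families, that $p$ and $q$ descend to \emph{birational proper} maps of the coarse spaces, and that the two fiber-product squares are genuinely Cartesian — this requires setting up $T$ as a carefully chosen component of an iterated fiber product of level covers and graph closures, rather than a single naive construction, and then verifying compatibility of all the pullback families. Everything else is a routine application of Keel–Mori, normalization, and the valuative criterion packaged in Lemma \ref{L:Normality}.
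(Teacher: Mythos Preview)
Your plan contains a genuine misreading of the diagram. The arrows $i:\overline{M}\to\SM_{g,n}$ and $j:X\to\X$ go \emph{from} schemes \emph{to} the stacks, and are required to be finite generically-\'etale. That forces $\overline{M}$ and $X$ to be finite \emph{covers} of $\SM_{g,n}$ and $\X$, not their Keel--Mori coarse spaces. If $\overline{M}$ were the coarse space there would be no morphism $\overline{M}\to\SM_{g,n}$ at all (there is no section of the coarse moduli map), so the bottom row of the diagram would collapse, the Cartesian squares in (3) would be meaningless, and, as you yourself note, you would have no tautological families to pull back. Your second paragraph half-corrects this by introducing a level cover $M\to\SM_{g,n}$, but you never do the analogous thing for $\X$, and the phrase ``$T$ itself can be arranged to map to $\X$'' is exactly the gap: from a map to the coarse space $X$ you cannot recover a map to the stack $\X$.

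The paper's construction avoids this entirely by working with the (non-separated) union $\X\cup\SM_{g,n}$, which is a finite-type stack with quasi-finite diagonal, and invoking \cite{EHKV} to produce a single finite generically-\'etale surjection $S\to\X\cup\SM_{g,n}$ from a scheme. Then $X$, $\overline{M}$, $U$ are defined as normalized fiber products of $S$ over $\X$, $\SM_{g,n}$, $\U$ respectively; this makes the lower squares Cartesian and the maps $i,j$ finite generically-\'etale by construction, and both families exist because $\overline{M}$ and $X$ genuinely map to the stacks. The scheme $T$ is then the normalized closure of the diagonal $U\hookrightarrow X\times_{\Spec\mathbb{Z}}\overline{M}$, giving (1)--(3). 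For (4) your instinct is right: one invokes Lemma~\ref{L:ExtendingCurves} to alter $T$ so that $\phi$ becomes regular; the paper then rebuilds $X$ and $\overline{M}$ as the Stein factorizations of $T'\to X$ and $T'\to\overline{M}$ to restore the structure of the diagram, a step you gesture at but should make explicit.
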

\begin{proof}
Since $\X \cup \SM_{g,n}$ is a (non-separated) algebraic stack of finite-type over $\Spec \mathbb{Z}$ with quasi-finite diagonal, there exists a representable, finite, generically-\'{e}tale, surjective morphism $S \rightarrow \X \cup \SM_{g,n}$, where $S$ is a scheme (\cite{EHKV}, Theorem 2.7). We may assume that $S$ is irreducible since $\X \cup \SM_{g,n}$ is. Define $X$, $\overline{M}$, and $U$ as the normalizations of the fiber products $S \times_{\X \cup \SM_{g,n}} \X$, $S \times_{\X \cup \SM_{g,n}} \SM_{g,n}$, and $S \times_{\X \cup \SM_{g,n}} \U$ respectively.  Finally, define $T$ to be the normalization of the closure of the image of the diagonal immersion $U \hookrightarrow X \times_{\Spec \mathbb{Z}} \overline{M}$. This gives a diagram satisfying (1), (2), and (3), but not necessarily (4), i.e. the induced rational map $\phi: \C^{s} \dashrightarrow \C$ may not be regular. 

Since the geometric fibers of $(\C \rightarrow T, \sigman)$ are prestable, Lemma \ref{L:ExtendingCurves} gives an alteration $T' \rightarrow T$ such that the rational map $\C^{s} \times_{T} T' \dashrightarrow \C \times_{T} T'$ is regular. Now define $X' \rightarrow X$ and $\overline{M}' \rightarrow \overline{M}$ to be the finite morphisms appearing in the Stein factorizations of $T' \rightarrow X$ and $T' \rightarrow \overline{M}$ respectively. Replacing $X$, $\overline{M}$, $T$ by $X'$, $\overline{M}'$, $T'$, and $U$ by $U \times_{\overline{M}} T'=U \times_{X} T'$ gives the desired diagram.
\end{proof}

For the remainder of this section, we fix a stable modular compactification $\X$, and a diagram as in Lemma \ref{L:Diagram}. We also use the following notation: For any geometric point $t \in T$, let $G_{t}$ be the dual graph of the fiber $(C^{s}_t, \{\sigma_i^{s}(t)\}_{i=1}^{n})$, so that $\Exc(\phi_t)$ determines a subgraph $\Exc(\phi_t) \subset G_t$. Also, we set $T_{G}:= \M_{G} \times_{\SM_{g,n}} T$, i.e. $T_{G} \subset T$ is the locally closed subscheme in $T$ over which the geometric fibers of $\pi^{s}$ have dual graph isomorphic to $G$.

 We wish to associate to $\X$ an extremal assignment $\Z$ by setting
\[
\Z(G):=i(\Exc(\phi_t)) \subset G,
\]
for some choice of $t \in T_{G}$ and some choice of isomorphism $i:G_{t} \simeq G$. The key point is to show that the subgraph $\Z(G) \subset G$ does not depend on these choices. More precisely, we will show:

\begin{proposition}\label{P:ExceptionalLocus}
\begin{itemize}
\item[]
\item[(a)] For any two geometric points $t_1, t_2 \in T_{G}$, there exists an isomorphism $i:G_{t_1} \simeq G_{t_2}$ such that $i(\Exc(\phi_{t_1}))=\Exc(\phi_{t_2}).$
\item[(b)] For any geometric point $t \in T_{G}$, and any automorphism $i:G_{t} \simeq G_{t}$, we have $i(Exc(\phi_t))=\Exc(\phi_t)$, i.e. $\Exc(\phi_t)$ is $\Aut(G_t)$-invariant.
\end{itemize}
\end{proposition}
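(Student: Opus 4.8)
The plan is to deduce everything from a single uniqueness statement, which is the one place where separatedness of $\X$ is used. Since $\phi$ is regular (Lemma~\ref{L:Diagram}(4)), Lemma~\ref{L:BirationalBaseChange} shows that $\phi_t\colon C^s_t\to\C_t$ is a contraction of curves for every geometric point $t\in T$, so $\Exc(\phi_t)\subseteq C^s_t$ is an honest subcurve and defines a subgraph of $G_t$. The assertion we would isolate first is: \emph{Key Lemma. If $t\in T$ and $\chi\colon C^s_t\to D$ is a contraction of curves with $[D]\in\X$, then $\Exc(\chi)=\Exc(\phi_t)$.} To prove it one spreads $\chi$ out over a trait: choose $\Delta\to T$ with $0\mapsto t$ and $\eta\mapsto\eta_T$, so $\C^s_\Delta\to\Delta$ is a generically smooth stable curve with special fibre $C^s_t$ carrying the birational contraction $\phi_\Delta\colon\C^s_\Delta\to\C_\Delta$ onto a family in $\X$. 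Set $Z:=\Exc(\chi)$; as a proper subcurve of the special fibre it has negative-definite self-intersection on the minimal regular model of $\C^s_\Delta$ (the estimate in the proof of Proposition~\ref{P:Contractions}), so Artin's criterion produces a contraction $\psi_\Delta\colon\C^s_\Delta\to\C'_\Delta$ over $\Delta$ with $\Exc(\psi_\Delta)=Z$, normal total space (Lemma~\ref{L:Normality}), and special fibre a contraction of $C^s_t$ with exceptional locus $Z$ landing in $\X$; since $\X\subseteq\V_{g,n}$ is open, $\C'_\Delta\to\Delta$ is then itself a family in $\X$ with the same (smooth) generic fibre as $\C_\Delta$. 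Separatedness of $\X$ forces the two maps $\Delta\to\X$ to coincide, giving an isomorphism $\C'_\Delta\xrightarrow{\ \sim\ }\C_\Delta$ over $\Delta$ restricting to the identity on generic fibres; composing with $\psi_\Delta$ yields two morphisms $\C^s_\Delta\to\C_\Delta$ over $\Delta$ that agree on the dense generic fibre, hence, $\C^s_\Delta$ being reduced and $\C_\Delta\to\Delta$ separated, agree everywhere. Restricting to special fibres gives $Z=\Exc(\psi_0)=\Exc(\phi_t)$.

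For part (a), the Key Lemma immediately gives $\Exc(\phi_{t_1})=\Exc(\phi_{t_2})$, as subcurves of the common curve, whenever $t_1,t_2\in T$ lie over the same geometric point of $\SM_{g,n}$ (apply it with $\chi$ the composite of $\phi_{t_1}$ with an isomorphism $C^s_{t_2}\cong C^s_{t_1}$). Next, over a trait mapping into $\M_G$ the dual graph is constant, and the rigidity argument of Lemma~\ref{L:Rigidity} (globalise the components of $\C^s$ after a finite base change, then use the rigidity lemma to see that a globalised component is contracted over the special fibre if and only if over the generic fibre) shows that the exceptional subgraph is carried along such traits. Finally, because $\M_G$ is irreducible and every irreducible component of $T_G$ maps onto $\M_G$, any two $t_1,t_2\in T_G$ can be joined, through one-parameter families inside $T_G$, to points lying over a common geometric point of $\M_G$; combining these three observations identifies $\Exc(\phi_{t_1})$ with $\Exc(\phi_{t_2})$, which is (a).

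For part (b), let $\Z(G)\subseteq G$ denote the subgraph given by any $\Exc(\phi_t)$, $t\in T_G$ (well defined up to $\Aut(G)$ by (a)), and put $H:=\mathrm{Stab}_{\Aut(G)}(\Z(G))$. Fix $\alpha\in\Aut(G)$. Since $\langle\alpha\rangle$ is cyclic it acts on the half-edges at each vertex of $G$ by a cyclic permutation, which is realised by an automorphism of a suitable pointed curve of the prescribed genus; gluing these equivariantly produces a stable curve $\widehat C^s$ with dual graph $G$ and an automorphism $\widehat\alpha$ inducing $\alpha$. Pick $\widehat t\in T$ over $[\widehat C^s]$ and set $Z:=\Exc(\phi_{\widehat t})$. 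Then $\phi_{\widehat t}\circ\widehat\alpha^{-1}$ is a contraction of $C^s_{\widehat t}$ with exceptional locus $\widehat\alpha(Z)$ and target $\C_{\widehat t}\in\X$, so the Key Lemma (at $\widehat t$) forces $\widehat\alpha(Z)=Z$; thus $\Z(G)$, placed inside $G_{\widehat t}$, is $\widehat\alpha$-invariant. Identifying $G_{\widehat t}$ with $G$ via (a) we obtain $\Z(G)=g(\Z(G_{t_0}))$ for some $g\in\Aut(G)$, whence $g^{-1}\alpha g\in H$, i.e.\ $\alpha$ lies in a conjugate of $H$. As $\alpha$ was arbitrary and a finite group is never the union of the conjugates of a proper subgroup, $H=\Aut(G)$, so $\Z(G)$ is $\Aut(G)$-invariant; by (a) every $\Exc(\phi_t)$ then equals $\Z(G)$, which is (b).

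The crux is the Key Lemma: a priori $\phi_t$ and a competing contraction could have isomorphic but differently-attached targets in $\X$, and it is precisely the separatedness of $\X$, not merely the quasi-finiteness of its diagonal, that, after spreading both over a trait, forces them to coincide. The point requiring the most care is the step identifying the special fibre of the Artin contraction of $\C^s_\Delta$ along $Z$ with the prescribed contraction $\chi$, so that ``the special fibre of $\C'_\Delta$ lands in $\X$'' can genuinely be read off; this rests on Proposition~\ref{P:Contractions} together with the bookkeeping of the moduli of attaching data of the singularities being created.
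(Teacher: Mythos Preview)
Your Key Lemma is the right statement to aim for, and once it is in hand both (a) and (b) follow essentially as you say (the conjugacy trick in (b) is a nice touch). But the proof of the Key Lemma has a genuine gap at precisely the point you flag in your last paragraph, and it cannot be closed by ``bookkeeping.'' When you Artin-contract $Z=\Exc(\chi)$ inside $\C^s_\Delta$, the resulting special fibre $(\C'_\Delta)_0$ is \emph{some} curve obtained from $C^s_t$ by replacing each connected component of $Z$ by a singularity of the correct type $(g,m)$---but not, in general, the curve $D$. For fixed $(g,m)$ there are many analytically distinct singularities (already for $(g,m)=(1,2)$ one has both the tacnode and the cusp with a transverse branch), and which one the Artin contraction produces depends on the chosen smoothing $\Delta\to T$, not on $\chi$. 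So knowing $[D]\in\X$ tells you nothing about $[(\C'_\Delta)_0]$, and your sentence ``special fibre \dots\ landing in $\X$'' is exactly the assertion that needs proof. Proposition~\ref{P:Contractions} only supplies existence of \emph{a} contraction, with no control over the attaching data.

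The paper circumvents this rather than confronting it head-on. For (a), instead of comparing two contractions of the \emph{same} stable curve, it lifts a single map $\Delta\to\SM_{g,n}$ to several maps $\tilde u_i:\Delta\to T$ hitting each connected component of the fibre $T_x$ (Lemma~\ref{L:LiftableMap}); properness of both $\SM_{g,n}$ and $\X$ then extends the generic isomorphisms $(\C^s_i)_\eta\simeq(\C^s_j)_\eta$ and $(\C_i)_\eta\simeq(\C_j)_\eta$ over all of $\Delta$, yielding a commuting ladder that identifies the exceptional loci. For (b), the paper proves the converse of your Key Lemma (Lemma~\ref{L:TwistingX}): if $\Exc(f)$ matches some $\Exc(\phi_t)$, then $[D]\in\X$. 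The crucial input is Lemma~\ref{L:LastLemma}, which globalises over the gluing map $\prod_i\SM_{g_i,m_i+l_i}\to\SM_{g,n}$ of Lemma~\ref{L:Mapping} and uses rigidity to show the matching condition propagates to \emph{every} contraction from a stable curve to $D$. This is the mechanism that handles the moduli of attaching data; without something of this sort, your Key Lemma does not go through.

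A secondary point: your claim that ``every irreducible component of $T_G$ maps onto $\M_G$'' is not justified, since $T\to\SM_{g,n}$ is proper surjective but not flat (it factors through a blow-up). The paper only needs one dominating component, and routes all other points through it via the fibre argument.
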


Before proving this proposition, let us use it to prove Theorem \ref{T:Classification}.

\begin{proof}[Proof of Theorem \ref{T:Classification}, assuming Proposition \ref{P:ExceptionalLocus}.]
If $G$ is any dual graph of an $n$-pointed stable curve of genus $g$, we define a subgraph $\Z(G) \subset G$ by the recipe
$$
\Z(G):=i(\Exc(\phi_t)) \subset G,
$$
for any choice of $t \in T_{G}$ and isomorphism $i:G_{t} \simeq G$. By Proposition \ref{P:ExceptionalLocus}, the subgraph of $\Z(G) \subset G$ does not depend on the choice of $t \in T_{G}$ or the choice of isomorphism $i:G_{t} \simeq G$. We claim that the assignment $G \rightarrow \Z(G)$ defines an extremal assignment over $\SM_{g,n}$, and that $\X=\SM_{g,n}(\Z)$.

To prove that $\Z$ is an extremal assignment, we must show that it satisfies axioms 1-3 of Definition \ref{D:Assignment}. For axiom 1, suppose that $\Z(G) =G$ for some dual graph $G$. Then there exists a point $t \in T$ such that $\Exc(\phi_t)=C^{s}_t$. Since $T$ is connected, and the locus
$$\{ t \in T |\,\, \Exc(\phi_t)=C^s_{t} \}$$
is both open and closed in $T$, it must follow that $\Exc(\phi_t)=C^{s}_t$ \emph{for every} $t \in T$. But this is impossible since $\phi$ is an isomorphism over the generic point of $T$. We conclude that $\Z(G) \neq G$ for every dual graph $G$.

Axiom 2 is immediate from Proposition \ref{P:ExceptionalLocus} (b).

Finally, for axiom 3, suppose that $G \leadsto G'$ is an arbitrary specialization of dual graphs. Let $x \in \SM_{g,n}$ be the generic point of $\M_{G}$, let $y \in \SM_{g,n}$ be a closed point of $\M_{G'}$, and let $t \in T$ be any point lying over $x$. Since $y \in \overline{\{x\}}$, there exists a diagram
\[
\xymatrix{
\eta \ar[d] \ar[r]^{\tilde{u}}&T \ar[d]\\
\Delta \ar[r]^{u}& \SM_{g,n}
}
\]
satisfying $u(\eta)=x$, $u(0)=y$, $\tilde{u}(\eta)=t$. Since $T \rightarrow \SM_{g,n}$ is proper, this diagram fills in (possibly after a finite base-change) to give map $\tilde{u}:\Delta \rightarrow T$, and we may consider the induced morphism of families pulled back from $T$:
\[
\xymatrix{
\C^{s}  \ar[dr] \ar[rr]^{\phi} && \C \ar[dl]\\
&\Delta&\\
}
\]
Since our definition of $\Z$ does not depend on the choice of $t \in T$, we have $\Z(G)=\Exc(\phi_{\overline{\eta}})$ and $\Z(G')=\Exc(\phi_{0})$. After a finite base-change, we may assume that $\C^{s}=\C_{1} \cup \ldots \cup \C_{m}$, where each $\C_{i} \rightarrow \Delta$ is a flat family of curves with smooth generic fiber, and it suffices to show that $$(\C_{i})_{\bar{\eta}} \in \Exc(\phi_{\overline{\eta}}) \iff (\C_{i})_{0} \in \Exc(\phi_0).$$
But this is an immediate consequence of the rigidity lemma. This completes the proof that $\Z$ is an extremal assignment over $\SM_{g,n}$.

Since $\Z$ is an extremal assignment, Theorem \ref{T:Construction} gives a stable modular compactification $\SM_{g,n}(\Z) \subset \V_{g,n}$, and we wish to show that $\X=\SM_{g,n}(\Z).$
By Lemma \ref{L:BirationalBaseChange}, the map $\phi_t:C^{s}_t \rightarrow C_t$ is a contraction for any geometric point $t \in T$. Since $\Exc(\phi_t)=\Z(C^{s}_t)$ by the definition of $\Z$, every geometric fiber of $(\C \rightarrow T, \{\sigma_i\}_{i=1}^{n})$ is $\Z$-stable. Since $T \rightarrow \X$ is surjective, we conclude that every geometric point of $\X$ is contained in $\SM_{g,n}(\Z)$. Since $\SM_{g,n}(\Z)$ and $\X$ are open in $\V_{g,n}$, the natural inclusion $\X \subset \SM_{g,n}(\Z)$ is an open immersion. Furthermore, this inclusion is proper over $\Spec \mathbb{Z}$, since both $\X$ and $\SM_{g,n}(\Z)$ are. A proper dominant morphism is surjective, so $\X = \SM_{g,n}(\Z)$ as desired.

\end{proof}

It remains to prove Proposition \ref{P:ExceptionalLocus}. The major difficulty comes from the fact that the subscheme $T_{G} \subset T$ may have several connected components. The following lemma, used in conjunction with the valuative criterion for $\X$, will allow us to relate the exceptional loci of the contractions $\phi_{t}$, even as $t$ ranges over different connected components of $T_{G}$.

\begin{lemma}\label{L:LiftableMap}
Given $T \rightarrow \SM_{g,n}$ as in Lemma \ref{L:Diagram}, let $x \in \SM_{g,n}$ be any geometric point, and let $T_1, \ldots T_k$ be the connected components of the fiber $T_{x}$. Then there exists a diagram

\[
\xymatrix{
&&T \ar[d]\\
\Delta \ar[rr]^{\,\,\,\,u} \ar[urr]^{\{\tilde{u}_i\}_{i=1}^{k}}&& \SM_{g,n}
}
\]
such that
\begin{itemize}
\item[1.] $u(\eta)=\eta_{\SM_{g,n}}$, $u(0)=x$,
\item[2.] $\tilde{u}_i(\eta)=\eta_T$, $\tilde{u}_i(0) \in T_i$ for each $i=1, \ldots, k$,
\end{itemize}
where $\eta_{\SM_{g,n}}$ and $\eta_T$
 are the generic points of $\SM_{g,n}$ and $T$ respectively.
\end{lemma}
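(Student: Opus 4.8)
The plan is to reduce the statement to a lifting property for the finite cover $i\colon\overline{M}\to\SM_{g,n}$, prove that property by a local analysis of a base change, and then carry the resulting traces up through the proper birational morphism $p\colon T\to\overline{M}$.

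\emph{Reduction through $p$.} Since $p$ is proper and birational and $\overline{M}$ is normal, it is its own Stein factorization, so $p_*\O_T=\O_{\overline{M}}$ and every fibre of $p$ is connected. As $i$ is finite, $i^{-1}(x)=\{\overline{x}_1,\dots,\overline{x}_k\}$ is finite and $T_x=\coprod_j p^{-1}(\overline{x}_j)$ topologically, with each $p^{-1}(\overline{x}_j)$ open, closed and connected in $T_x$; relabelling, $T_j=p^{-1}(\overline{x}_j)$. Hence it suffices to produce a single $u\colon\Delta\to\SM_{g,n}$ with $u(\eta)=\eta_{\SM_{g,n}}$, $u(0)=x$, together with lifts $\overline{u}_j\colon\Delta\to\overline{M}$ of $u$ satisfying $\overline{u}_j(\eta)=\eta_{\overline{M}}$ and $\overline{u}_j(0)=\overline{x}_j$: each $\overline{u}_j$ then lifts further to $T$, because the base change $T\times_{\overline{M}}\Delta$ is proper over $\Delta$ and, $p$ being birational, an isomorphism over $\eta$, so a component dominating $\Delta$ maps isomorphically to $\Delta$ over $\eta$ while its special fibre lies in $p^{-1}(\overline x_j)=T_j$; tracing inside it, after a finite base change, gives $\tilde u_j$ with $\tilde u_j(\eta)=\eta_T$ and $\tilde u_j(0)\in T_j$. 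A final common base change of the finitely many intervening base changes over $\Delta$ produces the diagram; all residue fields stay algebraically closed, being finite extensions of the algebraically closed residue field of $\Delta$.

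\emph{The lifting property for $i$.} Fix any trace $u_0\colon\Delta_0\to\SM_{g,n}$ through $\eta_{\SM_{g,n}}$ and $x$ (such a trace exists since $\SM_{g,n}$ is irreducible), with discrete valuation ring $R_0$ and uniformizer $\pi$, and consider the finite morphism $\overline{M}\times_{\SM_{g,n}}\Delta_0\to\Delta_0$. Because $i$ is generically étale and $\overline M$ is irreducible, $\eta_{\overline M}$ is the only point of $\overline M$ over $\eta_{\SM_{g,n}}$, so the generic fibre of $\overline{M}\times_{\SM_{g,n}}\Delta_0\to\Delta_0$ is nonempty and there are components dominating $\Delta_0$, whose generic points necessarily map to $\eta_{\overline M}$. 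The crux is that \emph{every} $\overline x_j$ lies on such a dominating component. Granting this, for each $j$ pick a dominating component $V_j\ni\overline x_j$; as $V_j\to\Delta_0$ is finite, normalizing $V_j$ and tracing from its generic point down to $\overline x_j$ yields a finite base change $\Delta_0^{(j)}\to\Delta_0$ and the required lift $\overline u_j\colon\Delta_0^{(j)}\to\overline M$ of $u_0$; a common finite base change of the $\Delta_0^{(j)}$ over $\Delta_0$ then gives $u$ and the $\overline u_j$.

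\emph{The main obstacle.} Proving that every $\overline x_j$ lies on a dominating component is, I expect, the one delicate point: the difficulty is that $\overline M$ need not be Cohen--Macaulay, so $i$ need not be flat, and a priori the base change could ``pinch off'' a point of $i^{-1}(x)$ onto a purely vertical component. My proposed route is to pass to completed local rings. Writing $\mathcal O=\O_{\overline M\times_{\SM_{g,n}}\Delta_0,\,\overline x_j}$, finiteness of the base change over $\Delta_0$ identifies $\widehat{\mathcal O}\cong\widehat\O_{\overline M,\overline x_j}\otimes_{\widehat\O_{\SM_{g,n},x}}\widehat R_0$; now $\widehat\O_{\overline M,\overline x_j}$ is a \emph{domain} (this is where normality of $\overline M$, plus excellence, enters), module-finite over $\widehat\O_{\SM_{g,n},x}$ and containing it, hence an integral extension of domains, so by lying-over one gets, for $\mathfrak p:=\ker(\widehat\O_{\SM_{g,n},x}\to\Frac\widehat R_0)$, the dimension equality $\dim(\widehat\O_{\overline M,\overline x_j}/\mathfrak p\widehat\O_{\overline M,\overline x_j})=\dim(\widehat\O_{\SM_{g,n},x}/\mathfrak p)$; this forces $\widehat\O_{\overline M,\overline x_j}\otimes_{\widehat\O_{\SM_{g,n},x}}\Frac\widehat R_0\ne0$, i.e.\ $\widehat{\mathcal O}$ is not killed by a power of $\pi$, hence neither is $\mathcal O$ by faithful flatness of completion, which is exactly what was needed. (One could instead avoid $\overline M$ and work directly with $T\times_{\SM_{g,n}}\cdots\times_{\SM_{g,n}}T$, but the same ``no pinching'' issue reappears.) Everything else is bookkeeping with Stein factorizations and finite base changes, in the spirit of Lemmas \ref{L:Normality} and \ref{L:ExtendingCurves}.
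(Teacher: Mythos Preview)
Your approach is essentially the paper's: reduce to the finite cover $i\colon\overline{M}\to\SM_{g,n}$ using that $p\colon T\to\overline{M}$ has connected fibers (Zariski's main theorem), then produce lifts $\overline{u}_j$ through each $\overline{x}_j$, and finally push these up to $T$ by properness of $p$. The paper does exactly this, taking a component $S$ of $\overline{M}\times_{\SM_{g,n}}\Delta$ dominating $\Delta$, splitting its generic fibre into sections after base change, and reading off the $\overline{u}_j$ as the limits of those sections.

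Where you diverge is in isolating the claim that \emph{every} $\overline{x}_j$ lies on a dominating component of $\overline{M}\times_{\SM_{g,n}}\Delta_0$. The paper simply writes ``$S_0=\{x_1,\ldots,x_k\}$'' without further comment; you are right that this is the one nontrivial step, since $i$ need not be flat. Your completion argument is conceptually sound---normality plus excellence gives that $\widehat{\O}_{\overline{M},\overline{x}_j}$ is a domain, and then an integrality/lying-over argument rules out $\overline{x}_j$ being isolated in the fibre product---but two details deserve care. First, $\SM_{g,n}$ is a stack, so $\widehat{\O}_{\SM_{g,n},x}$ should be interpreted via a smooth atlas; once you base-change to an atlas $M'\to\SM_{g,n}$, normality of $\overline{M}$ is preserved and you are working with honest schemes. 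Second, the dimension equality you invoke does not by itself yield $B\otimes_A\Frac\widehat R_0\neq 0$, since $A/\mathfrak{p}\hookrightarrow\widehat R_0$ need not be finite; it is cleaner to argue directly that the finite injection $A\hookrightarrow B$ of complete local domains satisfies going-down (Cohen--Seidenberg, since $A$ is regular), so the height-one prime of $A$ cut out by the arc lifts to a height-one prime of $B$ inside $\mathfrak{m}_B$, which forces $\dim\mathcal{O}\geq 1$. With that adjustment your proof is complete and in fact more scrupulous than the paper's on this point.
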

\begin{proof}
Let $\{x_{1}, \ldots, x_{k}\}$ be the fiber of $\overline{M} \rightarrow \SM_{g,n}$ over $x$. By Zariski's main theorem, the geometric fibers of $T \rightarrow \overline{M}$ are connected, so the connected components $T_{1}, \ldots, T_{k}$ are simply the fibers of $T \rightarrow \overline{M}$ over $x_1, \ldots, x_k$.

Let $u:\Delta \rightarrow \SM_{g,n}$ be any map sending $\eta \rightarrow \eta_{\SM_{g,n}}$ and $0 \rightarrow x$. Let $S$ be any irreducible component of the fiber product $\overline{M} \times_{\SM_{g,n}} \Delta$ which dominates both $\overline{M}$ and $\Delta$. Since $S \rightarrow \Delta$ is generically etale, we may assume, after a finite base-change, that the generic fiber is contained in a union of sections $\{\sigma_i\}_{i=1}^{d}$. Furthermore, since the generic fiber of $S \rightarrow \Delta$ is dense in $S$, each point of the special fiber $S_{0}=\{x_1, \ldots, x_k\}$ is equal to $\sigma_i(0)$ for some section $\sigma_i$. Reordering if necessary, we may assume that $\sigma_i(0)=x_i$ for $i=1, \ldots, k$.

The sections $\{\sigma_i\}_{i=1}^{k}$ induces lifts of $u$ to $\overline{M}$, i.e we have a diagram

\[
\xymatrix{
&&\overline{M} \ar[d]\\
\Delta \ar[rr]^{\,\,\,\,u} \ar[urr]^{\{u_i\}_{i=1}^{k}}&& \SM_{g,n}
}
\]
where $u_i(\eta)=\eta_{\overline{M}}$ and $u_i(0)=x_i$ for $i=1, \ldots, k$. Since $T \rightarrow \overline{M}$ is proper and an isomorphism over the generic point, each map $u_{i}$ lifts uniquely to a map $\tilde{u}_i:\Delta \rightarrow T$ satisfying $\tilde{u}_i(\eta)=\eta_{T}$ and $\tilde{u}_i(0) \in T_i.$ 
\end{proof}

\begin{itemize}
\item[]
\end{itemize}
\begin{proof}[Proof of Proposition \ref{P:ExceptionalLocus}(a)]
\begin{itemize}
\item[]
\end{itemize}
We will prove the statement in two steps. First, we show that if a pair of points $t_1,t_2$ is contained in a single connected component of $T_{G}$, then there exists an isomorphism $i:G_{t_1} \simeq G_{t_2}$ such that $i(\Exc(\phi_{t_1}))=\Exc(\phi_{t_2})$. Second, we will show that if a pair of points $t_1, t_2 \in T$ is contained in the fiber $T_{x}$ over a geometric point $x \in \SM_{g,n}$, then there exists an isomorphism $i:G_{t_1} \simeq G_{t_2}$ such that $i(\Exc(\phi_{t_1}))=\Exc(\phi_{t_2})$.

It is easy to see that these two statements suffice: Indeed, given two arbitrary geometric points $t_1, t_2 \in T_{G}$ mapping to $x_1, x_2 \in \SM_{g,n}$, let $T_{G}'$ be an irreducible component of $T_{G}$ dominating $\M_{G}$, and let $u_1, u_2 \in T_{G}'$ be two geometric points lying above $x_1$ and $x_2$ respectively. By the second claim, there exist isomorphisms
\begin{align*}
i_1:G_{t_1} \simeq G_{u_1}& \text{ satisfying } i_1(\Exc(\phi_{t_1}))=\Exc(\phi_{u_1}),\\
i_2:G_{t_2} \simeq G_{u_2}& \text{ satisfying } i_2(\Exc(\phi_{t_2}))=\Exc(\phi_{u_2}).\\
\intertext{By the first claim, there exists an isomorphism}
i:G_{u_1} \simeq G_{u_2}& \text{ satisfying } i(\Exc(\phi_{u_1}))=\Exc(\phi_{u_2}).
\end{align*}
Thus, $j:=i_2^{-1} \circ i \circ i_1: G_{t_1} \simeq G_{t_2}$ satisfies $j(\Exc(\phi_{t_1}))=\Exc(\phi_{t_2})$, as desired.\\

To prove the first claim, let $S$ be any connected component of $T_{G}$ and consider the induced diagram
\[
\xymatrix{
\C^{s} \ar[dr] \ar[rr]^{\phi}&& \C \ar[ld]\\
&S&
}
\]
After a finite surjective base-change, we may assume that
$$
\C^{s} \simeq \C^s_{1} \cup \ldots \cup \C^s_{m},
$$
where each $\C^s_{i} \rightarrow S$ is proper and smooth. Note that this isomorphism induces an obvious identification of dual graphs $i:G_{s_1} \simeq G_{s_2}$ for any two geometric points $s_1, s_2 \in S$. Furthermore, since one geometric fiber of $\C^{s}_{i} \rightarrow S$ is contracted by $\phi$ if and only if \emph{every} geometric fiber $\C_{i} \rightarrow S$ is contracted by $\phi$, we have $i(\Exc(\phi_{s_1}))=\Exc(\phi_{s_2})$.\\

To prove the second claim, let $x \in \SM_{g,n}$ be any geometric point and let $T_1, \ldots, T_k$ be the connected components of $T_{x}$. Given the first claim, it suffices to prove that there exist points $t_{i} \in T_{i}$ for each $i=1, \ldots, k$, and isomorphisms
$$
G_{t_1} \simeq G_{t_2} \simeq \cdots \simeq G_{t_k},
$$
identifying $\Exc(\phi_{t_1}) \simeq \Exc(\phi_{t_2})  \simeq \cdots \simeq \Exc(\phi_{t_k})$.

By Lemma \ref{L:LiftableMap}, there exists a commutative diagram
\[
\xymatrix{
&&T \ar[d]\\
\Delta \ar[rr]^{\,\,\,\,u} \ar[urr]^{\{\tilde{u}_i\}_{i=1}^{k}}&& \SM_{g,n}
}
\]
 satisfying
\begin{itemize}
\item[1.] $u(\eta)=\eta_{\SM_{g,n}}$, $u(0)=x$.
\item[2.] $\tilde{u}_i(\eta)=\eta_T$, $\tilde{u}_i(0) \in T_i$ for each $i=1, \ldots, k$.
\end{itemize}
Set $t_{i}:=\tilde{u}_{i}(0)$ for each $i=1, \ldots, k$, and let
\[
\xymatrix{
\C_{i}^{s} \ar[rr]^{\phi_i} \ar[dr]&&\C_{i} \ar[dl]\\
&\Delta&
}
\]
be the diagram induced by the morphism $\tilde{u}_i:\Delta \rightarrow T$. Note that, since the compositions $\Delta \rightarrow T \rightarrow \SM_{g,n}$ are identical when restricted to the generic point $\eta \in \Delta$, we obtain a commutative diagram of isomorphisms over $\eta$:
\[
\xymatrix{
(\C^{s}_1)_{\eta} \ar[r]^{\simeq}  \ar[d]^{(\phi_1)_{\eta}} & (\C^{s}_2)_{\eta} \ar[r]^{\simeq} \ar[d]^{(\phi_2)_{\eta}} & \cdots \ar[r]^{\simeq} &(\C^{s}_k)_{\eta} \ar[d]^{(\phi_k)_{\eta}} \\
(\C_1)_{\eta} \ar[r]^{\simeq} & (\C_2)_{\eta} \ar[r]^{\simeq} & \cdots \ar[r]^{\simeq}&(\C_k)_{\eta}\\
}
\]
Since $\SM_{g,n}$ is proper over $\Spec \mathbb{Z}$, each isomorphism $(\C^{s}_{i})_{\eta} \simeq (\C^{s}_j)_{\eta}$ extends uniquely to an isomorphism $\C^{s}_{i} \simeq \C^{s}_{j}$. Similarly, since $\X$ is proper over $\Spec \mathbb{Z}$, each isomorphism $(\C_{i})_{\eta} \simeq (\C_j)_{\eta}$ extends uniquely to an isomorphism $\C_{i} \simeq \C_{j}$. Thus, we obtain a commutative diagram over $\Delta$:
\[
\xymatrix{
\C^{s}_1 \ar[r]^{\simeq}  \ar[d]^{\phi_1} & \C^{s}_2 \ar[r]^{\simeq} \ar[d]^{\phi_2} & \cdots \ar[r]^{\simeq} &\C^{s}_k \ar[d]^{\phi_k} \\
\C_1 \ar[r]^{\simeq} & \C_2 \ar[r]^{\simeq} & \cdots \ar[r]^{\simeq}& \C_k\\
}
\]
Restricting to the top row of isomorphisms to the special fiber, we obtain isomorphisms
$$
C^{s}_{t_1} \simeq C^{s}_{t_2} \simeq \ldots \simeq C^{s}_{t_k},
$$
identifying $\Exc(\phi_{t_1}) \simeq \ldots \simeq \Exc(\phi_{t_k})$, as desired.\\
\end{proof}

It remains to prove Proposition \ref{P:ExceptionalLocus} (b). From Proposition \ref{P:ExceptionalLocus} (a), it already follows that every curve parametrized by $\X$ is obtained by contracting some subcurve of a stable curve $Z(C^{s}) \subset C^{s}$, and that this subcurve depends only on the dual graph of $C^{s}$. We will show that the separatedness of $\X$ forces the subcurve $Z(C^{s})$ to be invariant under automorphisms of the dual graph. The idea is simple: if $i$ is an automorphism of the dual graph of $C^{s}$ such that $Z(C^{s}) \neq i(Z(C^{s}))$, then contracting $Z(C^{s})$ or $i(Z(C^{s}))$ in a one-parameter smoothing of $C^{s}$ gives two distinct limits in $\X$. To make this precise, we need two preliminary lemmas.

\begin{lemma}\label{L:LastLemma}
Suppose that $(C,\pn)$ admits a contraction from a stable curve $f:(C^s, \spn) \rightarrow (C,\pn)$. Suppose that there exists $t \in T$ and an isomorphism $i:(C^{s}_t, \{\sigma_{i}(t)\}_{i=1}^{n}) \simeq (C^s, \spn)$ satisfying $i(\Exc(\phi_t))=\Exc(f)$. Then the same holds true for \emph{any} contraction from a stable curve to $(C,\pn)$, i.e given \emph{any} contraction $g:(D^{s}, \spn) \rightarrow (C,\pn)$ with $(D^{s}, \spn)$ stable, there exists  $t \in T$ and an isomorphism $i:(C^{s}_t, \{\sigma_{i}(t)\}_{i=1}^{n}) \simeq (D^s, \spn)$ satisfying $i(\Exc(\phi_t))=\Exc(g)$.
\end{lemma}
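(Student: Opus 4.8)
The plan is to transport the hypothesis from the contraction $f$ to the contraction $g$ by moving through the auxiliary moduli space $P := \prod_{i=1}^{k}\SM_{g_i,m_i+l_i}$ produced by Lemma~\ref{L:Mapping}, in exactly the spirit of the proof of Corollary~\ref{C:Independence}. Write $\gamma := g_{(C,\{p_i\})}\colon P \to \SM_{g,n}$ for the map of Lemma~\ref{L:Mapping}, and recall from the proof of Corollary~\ref{C:Independence} that the pull-back of the universal stable curve along $\gamma$ decomposes as $(\tilde{C}\times P)\coprod\coprod_{i=1}^{k}\C_i$, and that for every geometric point $\xi\in P$ the fiber $C^{s}_{\xi}$ carries a canonical contraction onto $(C,\pn)$ with exceptional locus $\bigcup_{i=1}^{k}(\C_i)_{\xi}$. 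By Lemma~\ref{L:Mapping}, \emph{every} contraction from a stable curve to $(C,\pn)$ is isomorphic, as a contraction, to one of these; so I would fix geometric points $y,x\in P$ with $(C^{s}_{y},\bigcup_i(\C_i)_{y})\cong(C^{s},\Exc(f))$ and $(C^{s}_{x},\bigcup_i(\C_i)_{x})\cong(D^{s},\Exc(g))$.

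Call a pair (stable curve, subcurve) \emph{realizable} if it is isomorphic to $(C^{s}_{t},\Exc(\phi_t))$ for some $t\in T$. By Proposition~\ref{P:ExceptionalLocus}(a), realizability of $(C^{s},Z)$ depends only on the point $[C^{s}]\in\SM_{g,n}$ together with the $\Aut(C^{s})$-orbit of $Z$; equivalently, the recipe $Z(G):=\Exc(\phi_t)$ (for any $t$ lying over $[C^{s}]$, of dual graph $G$) is well-defined up to $\Aut(G)$-conjugacy. Unwinding the isomorphisms in the previous paragraph, the hypothesis of the lemma says exactly that $(C^{s}_{y},\bigcup_i(\C_i)_{y})$ is realizable, and the conclusion is that $(C^{s}_{x},\bigcup_i(\C_i)_{x})$ is realizable; the latter produces a $t\in T$ together with an isomorphism $C^{s}_{t}\simeq D^{s}$ carrying $\Exc(\phi_t)$ onto $\Exc(g)$, which is precisely what is wanted.

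Thus the whole lemma comes down to showing that realizability of $(C^{s}_{\xi},\bigcup_i(\C_i)_{\xi})$ is independent of the geometric point $\xi\in P$. Since each $\SM_{g_i,m_i+l_i}$ is irreducible, so is $P$; let $\overline{\zeta}$ be a geometric generic point. I would run the two one-parameter specialization arguments of the proof of Corollary~\ref{C:Independence}: take $\Delta\to P$ with $\eta\mapsto\overline{\zeta}$, $0\mapsto y$, and another with $\eta\mapsto\overline{\zeta}$, $0\mapsto x$; compose each with $\gamma$ and lift the resulting $\Delta\to\SM_{g,n}$ to $\Delta\to T$ (possible after a finite base-change, as $T\to\SM_{g,n}$ is proper), while separatedness of $\SM_{g,n}$ identifies the family of stable curves pulled back from $T$ with the one pulled back from $P$ over all of $\Delta$. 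The point is that the subgraph $Z(G)$ above already satisfies the analogue of Axiom~(3) of Definition~\ref{D:Assignment}: for a family of stable curves over $\Delta$ dominating $T$, the rigidity lemma forces a component of $\C^{s}$ to lie in $\Exc(\phi)$ on the special fiber if and only if it does on the geometric generic fiber — this is exactly the computation of Lemma~\ref{L:Rigidity} and of the verification of Axiom~(3) in the proof of Theorem~\ref{T:Classification}. Since $\bigcup_i\C_i$ is a flat subcurve over $\Delta$, its components track through the specialization tautologically, so one obtains
\[
(C^{s}_{y},\textstyle\bigcup_i(\C_i)_{y})\ \text{realizable}\ \implies\ (C^{s}_{\overline{\zeta}},\textstyle\bigcup_i(\C_i)_{\overline{\zeta}})\ \text{realizable}\ \implies\ (C^{s}_{x},\textstyle\bigcup_i(\C_i)_{x})\ \text{realizable},
\]
which proves the lemma.

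The step I expect to be the main obstacle is the bookkeeping of isomorphisms in the last paragraph: one must ensure that as a subcurve is carried through the chain of DVR base-changes, through the identifications coming from separatedness of $\SM_{g,n}$, and through the isomorphisms furnished by Proposition~\ref{P:ExceptionalLocus}(a), the exceptional locus $\Exc(\phi)$ and the target subcurve $\bigcup_i\C_i$ remain \emph{simultaneously} matched, rather than matched by a different automorphism at each stage. Since Proposition~\ref{P:ExceptionalLocus}(a) is itself only an orbit-level statement, the clean way to handle this is to phrase the entire argument in terms of $\Aut$-orbits of subcurves from the outset, so that the transport is well-defined at every step.
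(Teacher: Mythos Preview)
Your instinct to route everything through $P=\prod_i \SM_{g_i,m_i+l_i}$ via Lemma~\ref{L:Mapping} is right, and the reformulation in terms of ``realizability'' of $(C^s_\xi,\bigcup_i(\C_i)_\xi)$ is clean. But the gap you flag is real, and the orbit-level fix you propose does not close it. After lifting $\Delta\to\SM_{g,n}$ to $T$ and identifying the two stable families via separatedness, the isomorphism $\Psi_0$ at the special fiber is \emph{some} isomorphism $C^s_{t_0}\simeq C^s_y$, not the one furnished by the hypothesis; so you only know that $\Psi_0(\Exc(\phi_{t_0}))$ and $\bigcup_i(\C_i)_y$ lie in the same $\Aut(G_y)$-orbit. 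Rigidity then transports each subcurve to the generic fiber \emph{separately}, and you would need them to land in the same $\Aut(G_{\bar\zeta})$-orbit there. That inference fails in general: the special dual graph $G_y$ typically has \emph{more} automorphisms than $G_{\bar\zeta}$, so a symmetry of $G_y$ witnessing the match at $y$ need not be the specialization of any symmetry of $G_{\bar\zeta}$, and two subcurves in the same $\Aut(G_y)$-orbit can generize to subcurves in distinct $\Aut(G_{\bar\zeta})$-orbits. You also cannot rescue this by invoking $\Aut$-invariance of $\Exc(\phi_t)$, since that is exactly Proposition~\ref{P:ExceptionalLocus}(b), which this lemma is being used to prove.

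The paper sidesteps the whole issue by passing to an irreducible component $T_C$ of the fiber product $T\times_{\SM_{g,n}}\SM_C$ dominating $\SM_C$. Over $T_C$ the stable family simultaneously carries the global morphism $\phi$ (pulled back from $T$) and the global subcurve $\bigcup_i\C^s_i$ (pulled back from $\SM_C$); the hypothesis produces a point of $T_C$ where $\Exc(\phi)=\bigcup_i\C^s_i$ as \emph{equal} subcurves (not merely up to an orbit), a single application of rigidity over the irreducible base $T_C$ propagates this equality everywhere, and surjectivity of $T_C\to\SM_C$ then reaches any $(D^s,\Exc(g))$. The fiber product is precisely the device that makes the lift to $T$ and the identification with the family over $P$ compatible \emph{globally} --- exactly what your DVR-by-DVR lifts cannot arrange.
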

\begin{proof}
By Lemma \ref{L:Mapping}, there exists a map
\[
h:\prod_{i=1}^{k} \SM_{g_i, m_i+l_i} \rightarrow \SM_{g,n}
\]
with the property that a stable curve admits a contraction to $(C,\pn)$ iff it lies in the image of $h$. Set $\SM_{C}:=\prod_{i=1}^{k} \SM_{g_i, m_i+l_i}$, let $T_{C}$ be an irreducible component of the fiber product $T \times_{\SM_{g,n}} \SM_{C}$ which dominates $\SM_{C}$, and consider the induced birational morphism of families over $T_{C}$:
\[
\xymatrix{
\C^{s} \ar[rr]^{\phi} \ar[dr]&&\C \ar[dl]\\
&T_{C}&\\
}
\]
Since the family $\C^{s} \rightarrow T_{C}$ is pulled back from $\SM_{C}$, it decomposes as
$$
\C^{s} \simeq (\tilde{C} \times T_{C}) \coprod \left( \coprod_{j=1}^{k}\C^s_i \right),
$$
where $\C^{s}_i \rightarrow T_{C}$ is pulled back from the universal curve over  $\SM_{g_i, m_i+l_i}$ via the $i^{th}$ projection.

Our hypothesis implies that there exists a point $t \in T_{C}$ such that $\Exc(\phi_{t})$ is precisely the union of fibers $\cup_{i=1}^{k}(\C_i)_{t}$. By the rigidity lemma, $\Exc(\phi_t)=\cup_{i=1}^{k}(\C^s_i)_{t}$ for \emph{all} $t \in T_{C}$. Since $T_{C} \rightarrow \SM_{C}$ is surjective, this implies that if $\psi:(D^{s}, \spn) \rightarrow (C,\pn)$ is any contraction from a stable curve to $(C,\pn)$, there exists $t \in T_{C} \subset T$ and an isomorphism $i:C^{s}_t \simeq D^{s}$ such that $i(\Exc(\phi_t))=\Exc(g)$.

\end{proof}

\begin{lemma}\label{L:TwistingX}
Let  $f: (C^s, \spn) \rightarrow (D, \pn)$ be a contraction from a stable curve $(C^s, \spn)$ to a smoothable curve $(D, \pn)$. Suppose that there exists $t \in T$ and an isomorphism $i:(C^{s}_t, \{\sigma_{i}(t)\}_{i=1}^{n}) \simeq (C^{s}, \spn)$ satisfying $i(\Exc(\phi_t))=\Exc(f)$. Then $[D, \pn] \in \X$.
\end{lemma}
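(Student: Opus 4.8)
The plan is to identify $(D,\pn)$ with a geometric fiber of the family $\C\to T$ produced in Lemma~\ref{L:Diagram}; since $T\to\X$ is surjective and $\C$ is the pullback of the universal curve over $\X$, that identification gives $[D,\pn]\in\X$.

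\emph{Realizing $D$ inside a family.} By Lemma~\ref{L:LastLemma} the hypothesis on $(D,\pn)$ is inherited by \emph{every} contraction from a stable curve to $(D,\pn)$, so we are free to replace $f$ and $C^s$ by any convenient pair. Since $(D,\pn)$ is smoothable and $\V_{g,n}$ is irreducible with $[D]\in\V_{g,n}$, choose a smoothing $\D\to\Delta$ with $\D_0\simeq(D,\pn)$ and $\D_\eta$ \emph{the generic curve}; then $\D$ is normal by Lemma~\ref{L:Normality}(1). Applying stable reduction to $\D_\eta$ and, after a finite base change, the usual argument from the proof of Lemma~\ref{L:ExtendingCurves}(2) (the complement of $\Exc(f)$ maps isomorphically, so every rational component of $\widetilde D$ keeps its $\geq 3$ distinguished points, forcing all unstable rational components of the special fiber to be contracted), we obtain a stable curve $\C^s\to\Delta$ and a \emph{regular} birational morphism $\beta:\C^s\to\D$, whose restriction $\beta_0:C^s_0\to D$ is a contraction by Lemma~\ref{L:BirationalBaseChange}. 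From now on we take $f:=\beta_0$ and $C^s:=C^s_0$.

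\emph{Importing the $\X$-structure.} The family $\C^s\to\Delta$ is classified by $u:\Delta\to\SM_{g,n}$ with $u(\eta)=\eta_{\SM_{g,n}}$ and $u(0)=[C^s_0]$. Running the proof of Lemma~\ref{L:LastLemma} with $\SM_C=\prod_i\SM_{g_i,m_i+l_i}$, the rigidity lemma shows that $\Exc(\phi)$ is constant — and equal to the pulled-back universal contracted subcurve, hence to $\Exc(\beta_0)$ — along the component of $T\times_{\SM_{g,n}}\SM_C$ dominating $\SM_C$; choosing $t^*\in T$ in the image of that component over $[C^s_0]$ gives a point at which the canonical identification $C^s_{t^*}=C^s_0$ carries $\Exc(\phi_{t^*})$ onto $\Exc(\beta_0)$. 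Because $T$ is irreducible and proper over $\Spec\mathbb{Z}$ and $u(\eta)=\eta_{\SM_{g,n}}$, the map $u$ lifts, after a further finite base change and for a suitable choice of $\D$, to $\tilde u:\Delta\to T$ with $\tilde u(\eta)=\eta_T$ and $\tilde u(0)=t^*$. Since $\C^s\to T$ is pulled back from $\SM_{g,n}$, pulling it back along $\tilde u$ recovers $\C^s\to\Delta$, and pulling back $\phi$ produces a birational morphism $\phi_\Delta:\C^s\to\C_\Delta$ over $\Delta$ with $\C_\Delta$ pulled back from $\X$ through $T\to\X$; in particular $[\C_\Delta]\in\X$, and $(\phi_\Delta)_0=\phi_{t^*}$ has exceptional locus $\Exc(\beta_0)$ on $C^s_0$. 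Now $\beta:\C^s\to\D$ and $\phi_\Delta:\C^s\to\C_\Delta$ are proper birational morphisms over $\Delta$ from the normal algebraic space $\C^s$ to normal algebraic spaces ($\D$ and $\C_\Delta$ are normal by Lemma~\ref{L:Normality}(1)), so each equals its own Stein factorization and they have equal exceptional cycles. Replacing $\C^s$ by its minimal resolution (a nodal family, as in Proposition~\ref{P:Contractions}) we may assume the total space regular; then Artin's criterion makes the contraction of a prescribed negative-definite cycle unique, so $\D\simeq\C_\Delta$ over $\Delta$. Restricting to special fibers, $(D,\pn)\simeq(C_{t^*},\{\sigma_i(t^*)\}_{i=1}^{n})$, a geometric fiber of $\C\to T$, whence $[D,\pn]\in\X$.

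\emph{Main obstacle.} The delicate point is the previous paragraph: arranging a point $t^*\in T$ and a lift of the smoothing through it at which the $\X$-exceptional locus is \emph{exactly} $\Exc(\beta_0)$, not merely a dual-graph-automorphic copy of it — this is precisely the $\Aut(G)$-invariance asserted in Proposition~\ref{P:ExceptionalLocus}(b), which we are not yet entitled to invoke. The resolution is to lean on (i) the freedom, granted by Lemma~\ref{L:LastLemma}, to work with whichever stable curve $C^s_0$ actually arises as the stable limit of $\D_\eta\rightsquigarrow D$; (ii) the choice $\D_\eta=\eta_{\V_{g,n}}$, which forces $u(\eta)=\eta_{\SM_{g,n}}$, so that by separatedness of $T\to\SM_{g,n}$ the lift is pinned down once its generic point is fixed; and (iii) the rigidity statement internal to the proof of Lemma~\ref{L:LastLemma}, which identifies $\Exc(\phi)$ with the universal contracted subcurve over an entire component of $T\times_{\SM_{g,n}}\SM_C$. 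Everything following the identification $\D\simeq\C_\Delta$ is the by-now-routine uniqueness-of-contractions argument used already in Section~\ref{S:Properness}.
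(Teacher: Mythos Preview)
Your overall strategy matches the paper's: realize $D$ as a fiber of $\C\to T$ by comparing a smoothing $\D$ of $D$ with the $\X$-family pulled back along a lift $\Delta\to T$, then use equality of exceptional loci on a common stable model to identify the two contractions. The first paragraph and the concluding uniqueness argument are essentially what the paper does (the paper gets $\D\simeq\C_\Delta$ directly from normality together with $\Exc(\phi)=\Exc(\psi)$; your detour through minimal resolutions and Artin's criterion is unnecessary but not wrong).

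The gap is in your lifting step. You produce $t^*\in T$ with $\Exc(\phi_{t^*})=\Exc(\beta_0)$ and then assert that ``the map $u$ lifts, after a further finite base change and for a suitable choice of $\D$, to $\tilde u:\Delta\to T$ with $\tilde u(0)=t^*$.'' Properness of $T\to\SM_{g,n}$ gives \emph{some} lift, and separatedness makes it unique once the generic point is fixed, but neither lets you prescribe the special value $\tilde u(0)$. The phrase ``for a suitable choice of $\D$'' is circular: $\D$ was already fixed (with $\D_0\simeq D$ and generic generic fiber), and changing it changes $C^s_0$ and hence $t^*$ itself. You identify this obstacle in your final paragraph but do not resolve it; item (ii) there confuses uniqueness of a lift with the ability to hit a prescribed target.

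The paper closes this gap by invoking Proposition~\ref{P:ExceptionalLocus}(a), which \emph{is} already available at this point (only part (b) is still outstanding). One takes an irreducible component $T_\Delta$ of $T\times_{\SM_{g,n}}\Delta$ dominating $\Delta$; the point $t$ supplied by Lemma~\ref{L:LastLemma} may land in a different connected component of the fiber over $0$, but Proposition~\ref{P:ExceptionalLocus}(a) allows one to replace it by a point of $T_\Delta$ with the same exceptional-locus property. Since $T_\Delta\to\Delta$ is generically \'etale, after a finite base change one finds a \emph{section} through this point, and that section is the required $\tilde u$. Your argument becomes correct if you insert exactly this use of Proposition~\ref{P:ExceptionalLocus}(a); without it, the existence of a lift with $\tilde u(0)=t^*$ is unproved.
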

\begin{proof}
Since $(D, \pn)$ is smoothable, we may consider a generic smoothing $(\D \rightarrow \Delta, \sigman)$. Since the fibers of $\D \rightarrow \Delta$ are prestable, we may apply Lemma \ref{L:ExtendingCurves} (2) to obtain (after finite base-change) a birational morphism:
\[
\xymatrix{
\D^{s} \ar[rr]^{\psi} \ar[dr]&&\D \ar[dl]\\
&\Delta&\\
}
\]
where $(\D^{s} \rightarrow \Delta, \sigmans)$ is a stable curve. By Lemma \ref{L:BirationalBaseChange}, the restriction of $\psi$ to the special fiber is a contraction morphism $\psi_{0}:(D^s, \spn) \rightarrow (D, \pn)$, though not necessarily the one given in the hypothesis of the lemma. Nevertheless, by Lemma \ref{L:LastLemma}, there exists a geometric point $t \in T$ and an isomorphism $i:(C^{s}_t, \{\sigma_{i}(t)\}_{i=1}^{n}) \simeq (D^{s}, \spn)$ satisfying $i(\Exc(\phi_t))=\Exc(\psi_0)$.

The stable curve $(\D^{s} \rightarrow \Delta, \sigmans)$ induces a map $\Delta \rightarrow \SM_{g,n}$, and we let $T_{\Delta}$ denote an irreducible component of $T \times_{\SM_{g,n}}\Delta$ which dominates $\Delta$. By Proposition \ref{P:ExceptionalLocus} (a), we may assume that the given point $t \in T$ lies in $T_{\Delta}$, so the isomorphism $i:C^{s}_t \simeq D^{s}$ determines a lift of the closed point $\Spec k \hookrightarrow \Delta$ to $T_{\Delta}$, i.e. we have a diagram
\[
\xymatrix{
&T_{\Delta} \ar[d] \ar[r]&T \ar[d]&\\
\Spec k \ar[r] \ar[ur]& \Delta \ar[r]& \SM_{g,n} \\
}
\]
Since the projection $T_{\Delta} \rightarrow \Delta$ is generically-\'{e}tale, we may assume (after finite base-change) that there exists a section $\Delta \rightarrow T_{\Delta}$ whose image contains the given $k$-point. This section induces a lifting $\Delta \rightarrow T$, and we may consider the birational morphism of families pulled back from $T$:
\[
\xymatrix{
\C^{s} \ar[rr]^{\phi} \ar[dr]&&\C \ar[dl]\\
&\Delta&\\
}
\]
By construction, the natural isomorphism $\C^{s} \simeq \D^s$ identifies $\Exc(\phi)$ with $\Exc(\psi)$. Since $\C$ and $\D$ are normal, there is an induced isomorphism $\C \simeq \D$. Thus, the isomorphism class of the curve $(D,\pn)$ appears as a fiber of the family $\C \rightarrow T$, i.e. $[D, \pn] \in \X$ as desired.
\end{proof}

Now we can prove Proposition \ref{P:ExceptionalLocus} (b).
\begin{proof}[Proof of Proposition \ref{P:ExceptionalLocus}(b)]
Suppose there exists a dual graph $G$ and a geometric point $t \in T_{G}$ such that $\Exc(\phi_t)$ fails to be $\Aut(G)$-invariant. By Proposition \ref{P:ExceptionalLocus}(a), \emph{every} geometric point $t \in T_{G}$ has the property that $\Exc(\phi_t)$ fails to be $\Aut(G)$-invariant. In particular, since $T_{G} \rightarrow \M_{G}$ is surjective, we may choose a geometric point $t \in T_{G}$ with the property that the induced map
$$\Aut(C^s_t, \{\sigma_{i}(t)\}_{i=1}^{n}) \rightarrow \Aut(G)$$ is surjective. (Simply choose a curve $[C^{s}, \{p_i\}_{i=1}^{n}] \in \M_{G}$ with the property that each of its components have identical moduli, and take $t \in T_{G}$ to be a point lying over $[C^{s}, \{p_i\}_{i=1}^{n}]$.) Now we will derive a contradiction to the separatedness of $\X$. 

Let $(\C^{s} \rightarrow \Delta, \sigman)$ be a smoothing of the curve $(C^s_t, \{\sigma_{i}(t)\}_{i=1}^{n})$. By our choice of $(C^s_t, \{\sigma_{i}(t)\}_{i=1}^{n})$, there exist two distinct subcurves of the special fiber $Z_1, Z_2 \subset C^{s}$ and isomorphisms $i_1, i_2:C^{s} \simeq C^s_{t}$ satisfying $i_1(Z_1)=\Exc(\phi_t)$, $i_2(Z_2)=\Exc(\phi_t)$. By Proposition \ref{P:Contractions}, there exist birational contractions
\[
\xymatrix{
&\C^{s} \ar[dr]^{\phi_2} \ar[dl]_{\phi_1}&\\
\C_1\ar[dr]&&\C_2 \ar[dl]\\
&\Delta&\\
}
\]
with $\Exc(\phi_1)=Z_1$, $\Exc(\phi_2)=Z_2$. Since the restriction of $\phi_1$ and $\phi_2$ to the special fiber are contractions, Lemma \ref{L:TwistingX} implies that the special fibers of $\C_1$ and $\C_2$ both lie in $\X$. Since $\X \subset \V_{g,n}$ is open, the maps $\Delta \rightarrow \V_{g,n}$ induced by $\C_1$ and $\C_2$ both factor through $\X$. Since $Z_{1} \neq Z_{2}$, the rational morphism $\C_{1} \dashrightarrow \C_{2}$ does not extend to an isomorphism, and we conclude that $\X$ is not separated, a contradiction.
\end{proof}

\appendix
\section{Stable modular compactifications of $\M_{2}, \M_{3}, \M_{2,1}$}
In this appendix, we give an explicit definition of the relative nef cone $\Nef$ and cone of curves $\EffCurves$ as rational closed convex polyhedral cones in $\Pic_{\Q}(\C/\SM_{g,n})$ and $\Pic_{\Q}(\C/\SM_{g,n})^{\vee}$. In the special cases $(g,n)=(2,0), (3,0), (2,1)$, we enumerate the extremal faces of $\EffCurves$ and describe the corresponding $\Z$-stability conditions, as guaranteed by Lemma \ref{L:NefAssignments}.

To begin, let $\pi:\C \rightarrow \SM_{g,n}$ denote the universal curve over the moduli stack of stable curves over an algebraically closed field of characteristic zero. In this setting, the $\Q$-Picard group of $\SM_{g,n}$ is well-known: we have natural line-bundles $\lambda, \{\psi_i\}, \{ \delta_{i,S} \} \in \Pic(\SM_{g,n})$, where $\lambda=\text{det} (\pi_*\omega_{\C/\SM_{g,n}})$, $\psi_{i}:=\sigma_i^*\omega_{\C/\SM_{g,n}}$, and $\delta_{i,S}$ is the line-bundle corresponding to the reduced irreducible Cartier divisor $\Delta_{i,S} \subset \SM_{g,n}$. Of course, if $i=0$ (resp. $g$), then we must have $|S| \geq 2$ (resp. $|S| \leq n-2$). Since we have a natural identification $\C \simeq \SM_{g,n+1}$, we may define elements $\omega_{\pi}, \{\sigma_i\}, \{E_{i,S}\} \in \Pic(\C)$ by the formulae:
\begin{align*}
\omega_{\pi}:&=\psi_{n+1},\\
\sigma_{i}:&=\delta_{0,i \cup \{n+1\}},&& i \in [1,n] \\\
E_{i,S}:&=\delta_{i,S \cup \{n+1\}}.&& i \in [0,g], S \subset [1,n]
\end{align*}
One should think of $\omega_{\pi}$ as the relative dualizing sheaf of $\pi$, $\sigma_i$ as the line-bundle corresponding to the divisor $\sigma_{i}(\SM_{g,n}) \subset \C$, and $E_{i,S}$ as the line-bundle corresponding to the irreducible component of $\pi^{-1}(\Delta_{i,S})$ whose fibers over $\Delta_{i,S}$ are curves of genus $i$, marked by the points of $S$. Whenever we write $\{\sigma_i\}$, we consider the index $i$ to run between $1$ and $n$, and whenever we write $\{E_{i,S}\}$ we consider $(i,S)$ to run over a set of indices representing each irreducible component of the boundary of $\SM_{g,n}$ once, excluding $\Delta_{irr}$ and $\Delta_{g/2, \emptyset}$.

\begin{lemma}\label{L:RelativePic}
The classes $\omega_{\pi}$, $\{\sigma_i\}$, and $\{E_{i,S}\}$ generate $\Pic_{\Q}(\C/\SM_{g,n})$. Moreover, we have
\begin{itemize}
\item[1.] If $g \geq 2$, these classes freely generate, i.e.
\[
\Pic_{\Q}(\C/\SM_{g,n}) = \Q\{\omega_{\pi}, \{\sigma_i\}, \{E_{i,S}\} \}
\]
\item[2.] If $g=1$, then the classes $\{\sigma_i\}$ and $\{E_{i,S}\}$ freely generate, i.e.
\[
\Pic_{\Q}(\C/\SM_{1,n}) = \Q\{ \{\sigma_i\}, \{E_{i,S}\} \}
\]
\item[3.]If $g=0$, then the classes  $\omega_{\pi}$ and $\{E_{i,S}\}$ freely generate, i.e.
\[
\Pic_{\Q}(\C/\SM_{0,n}) = \Q\{\omega_{\pi}, \{E_{i,S}\} \}.
\]
\end{itemize}
\end{lemma}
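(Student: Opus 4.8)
The plan is to identify $\Pic_\Q(\C/\SM_{g,n})$ with the cokernel of the pullback map $\pi^{*}\colon\Pic_\Q(\SM_{g,n})\to\Pic_\Q(\SM_{g,n+1})$ under the identification $\C\simeq\SM_{g,n+1}$ already recorded above, where $\pi$ is the forgetful morphism (equivalently, the universal curve). Since $\pi$ is proper with geometrically connected fibres, $\pi_{*}\O_{\SM_{g,n+1}}=\O_{\SM_{g,n}}$, so $\pi^{*}$ is injective and $\Pic_\Q(\C/\SM_{g,n})=\mathrm{coker}(\pi^{*})$. I would then feed in the known presentation of $\Pic_\Q(\SM_{g,m})$: by Arbarello--Cornalba \cite{AC} (supplemented by the classical computations in low genus and, for $g=0$, Keel's relations) it is generated by the tautological classes $\lambda,\psi_{1},\dots,\psi_{m},\delta_{\mathrm{irr}}$ and the reducible boundary classes $\delta_{i,S}$; it is \emph{freely} generated by them when $g\geq 3$, while for $g\leq 2$ the relations are completely explicit (one relation in genus $2$, namely Mumford's relation $10\lambda=\delta_{\mathrm{irr}}+2\delta_{1}$ on $\SM_{2}$ and its pullbacks; the relations forcing $\lambda$ and each $\psi_{j}$ to differ by a boundary class on $\SM_{1,m}$ together with $12\lambda=\delta_{\mathrm{irr}}+(\text{boundary})$; and $\lambda=0$ together with Keel's relations on $\SM_{0,m}$).

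The generation statement then follows from the standard comparison formulas for $\pi$: $\pi^{*}\lambda=\lambda$, $\pi^{*}\delta_{\mathrm{irr}}=\delta_{\mathrm{irr}}$, $\pi^{*}\psi_{i}=\psi_{i}-\delta_{0,\{i,n+1\}}$ for $i\leq n$, and $\pi^{*}\delta_{i,S}=\delta_{i,S}+\delta_{i,S\cup\{n+1\}}$ for $S\subseteq[1,n]$. Reducing modulo $\pi^{*}\Pic_\Q(\SM_{g,n})$ these give $\lambda\equiv\delta_{\mathrm{irr}}\equiv 0$, $\psi_{i}\equiv\sigma_{i}$ for $i\leq n$, and $\delta_{i,S}\equiv-E_{i,S}$, so the image of each listed generator of $\Pic_\Q(\SM_{g,n+1})$ in $\mathrm{coker}(\pi^{*})$ is, up to sign, one of $0$, $\omega_{\pi}=\psi_{n+1}$, $\sigma_{i}=\delta_{0,\{i,n+1\}}$, or $E_{i,S}=\delta_{i,S\cup\{n+1\}}$. (The divisor $\Delta_{g/2,\emptyset}$, relevant only when $g$ is even and $n=0$, contributes nothing, because the symmetry $(i,S)\leftrightarrow(g-i,S^{c})$ forces $2\,\delta_{g/2,\{n+1\}}\in\pi^{*}\Pic_\Q(\SM_{g,n})$, hence it dies rationally.) This proves that $\omega_{\pi}$, $\{\sigma_{i}\}$, and $\{E_{i,S}\}$ generate $\Pic_\Q(\C/\SM_{g,n})$ in every genus.

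For the freeness assertions I would run the same computation on the free $\Q$-module $A_{m}$ spanned by the tautological generator symbols of $\SM_{g,m}$, so that $\Pic_\Q(\SM_{g,m})=A_{m}/R_{m}$. A bijective bookkeeping of boundary divisors of $\SM_{g,n+1}$ — sorting each reducible one by its canonical label (the one with $n+1$ in the marking set) according to whether its genus-$0$ component carries exactly two marked points, giving a $\sigma_{i}$, or more, giving an $E_{i,S}$, with the two admissible sides for the point $n+1$ producing the symmetric pair $E_{i,S},E_{g-i,S^{c}}$ — shows that $\tilde\pi^{*}\colon A_{n}\hookrightarrow A_{n+1}$ is injective and that $A_{n+1}/\tilde\pi^{*}(A_{n})$ is freely generated by $\omega_{\pi}$, the $\sigma_{i}$, and one class $E_{i,S}$ for each reducible boundary divisor of $\SM_{g,n}$ other than $\Delta_{g/2,\emptyset}$. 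One then divides by the image of $R_{n+1}$. For $g\geq 3$, $R_{n+1}=0$, giving part (1). For $g=2$, $R_{n+1}$ is spanned by Mumford's relation, whose image in $A_{n+1}/\tilde\pi^{*}(A_{n})$ vanishes ($\lambda,\delta_{\mathrm{irr}}$ die and the symmetric pairs in $\sum\delta_{i,S}$ cancel), so part (1) holds for all $g\geq 2$. For $g=1$, $R_{n+1}$ also contains $\psi_{n+1}-\lambda-(\text{boundary})=0$, whose image is a line of the form $\omega_{\pi}\equiv(\text{a combination of the }\sigma_{i}\text{ and }E_{i,S})$, eliminating $\omega_{\pi}$ and leaving the free basis $\{\sigma_{i}\}\cup\{E_{i,S}\}$ of part (2). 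For $g=0$, where $\lambda=0$ and each $\psi_{i}$ ($i\leq n$) is a boundary combination, the image of $R_{n+1}$ expresses every $\sigma_{i}$ through $\omega_{\pi}$ and the $E_{i,S}$ and introduces nothing further (as one confirms against Keel's formula for $\rho(\SM_{0,m})$), leaving the free basis $\{\omega_{\pi}\}\cup\{E_{i,S}\}$ of part (3).

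The routine-but-delicate part, which I expect to be the real work, is the boundary bookkeeping under the symmetry $(i,S)\leftrightarrow(g-i,S^{c})$ — it is precisely this symmetry that makes $\Delta_{g/2,\emptyset}$, and nothing else, drop out of the relative Picard group — together with checking that the genus-$\leq 1$ relations descend to \emph{exactly} the asserted relations in the cokernel and no others (i.e. that the image of $R_{n+1}$ has the expected dimension, $1$ when $g=1$ and $n$ when $g=0$). If one prefers to sidestep this, the linear independence of the candidate basis in each case can instead be verified directly by pairing against $1$-cycles supported in fibres of $\pi$ — a multisection cut out by a marked point, a component of a reducible fibre, an elliptic tail — all of which annihilate $\pi^{*}\Pic_\Q(\SM_{g,n})$ and so detect relations only among $\omega_{\pi},\sigma_{i},E_{i,S}$.
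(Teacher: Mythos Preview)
Your approach is essentially the same as the paper's: the paper's entire proof is the single sentence ``This follows from the generators and relations for $\Pic(\SM_{g,n})\otimes\Q$ described in \cite{AC},'' and what you have written is a careful unpacking of exactly that deduction via the cokernel of $\pi^{*}$ and the standard comparison formulas. Your computation is correct, and the case analysis for the relations in genus $\leq 2$ (Mumford's relation, the $\psi$--$\lambda$ relation in genus one, Keel's relations in genus zero) is precisely what one must check to extract the freeness statements from Arbarello--Cornalba; the paper simply leaves all of this to the reader.
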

\begin{proof}
This follows from the generators and relations for $\Pic(\SM_{g,n}) \otimes \Q$ described in \cite{AC}.
\end{proof}

Now let us recall how these generators intersect irreducible components of fibers of $\pi$ (see, for example, \cite{HarMor}). Let $(C^s,\spn)$ be a fiber of the universal curve $\pi: \C \rightarrow \SM_{g,n}$, and let $G$ be the dual graph of $(C^s,\spn)$. If $Z \subset C^{s}$ is an irreducible component, corresponding to the vertex $v \in G$, then we have

\begin{align*}
\omega_{\pi}.Z&=2g(v)-2+|v|\\
\sigma_{i}.Z&=
\begin{cases}
1&\text{if $v$ is labelled by $p_i$,}\\
0&\text{otherwise.}\\
\end{cases}\\
E_{i,S}.Z&=
\begin{cases}
1&\text{if $v$ has an edge of type-$(i,S)$,}\\
-1&\text{if $v$ has an edge of type-$(i,S)^{c}$,}\\
0&\text{otherwise},\\
\end{cases}
\end{align*}
where we say that \emph{$v$ has an edge of type-$(i,S)$} if $v$ meets an edge corresponding to a node that disconnects the curve into pieces of type $(i,S)$ and $(g-i,S)$, and $v$ lies on the piece of type $(g-i,S)$. Given a $\Q$-line bundle 
$$\L:=a\omega_{\pi}+\sum_{i}b_i\sigma_i+\sum_{i,S}c_iE_{i,S}, \text{ where } a, \{b_i\}, \{c_{i,S}\} \in \Q,$$ $\L$ is nef iff, for every dual graph $G$ and every vertex $v \in G$, we have
$$a(\omega_{\pi}.v)+b_i(\sigma_i.v)+c_{i,S}(E_{i,S}.v) \geq 0,$$
where $\omega_{\pi}.v$, $\sigma_i.v$, $E_{i,S}.v$ are defined by the expressions above. We then define the relative nef cone $\Nef \subset \Pic_{\Q}(\C/\SM_{g,n})$ to be the intersection of this finite collection of half-spaces. The fact that $\omega_{\pi}$ is positive on every stable curve implies that these half-spaces have non-empty intersection, hence determine a piecewise-linear closed convex cone. Of course, the \emph{relative cone of curves} is simply defined to be the dual cone $\EffCurves:=\Nef^{\vee} \subset \Pic_{\Q}(\C/\SM_{g,n})^{\vee}$.

Let us see how this works in practice by computing the relative cone of curves for $\SM_{2}$, $\SM_{3}$, and $\SM_{2,1}$, and describe the stability condition corresponding to each face: already, in these low-genus examples, one sees many new stability conditions that have no counterpart in the existing literature. Throughout the following examples, we will make repeated use of the observation that to determine whether a line-bundle is $\pi$-nef, it is sufficient to intersect it against those fibers of $\pi$ which are maximally-degenerate, i.e. those which correspond to zero strata in $\SM_{g,n}$.

\begin{figure}
\scalebox{.50}{\includegraphics{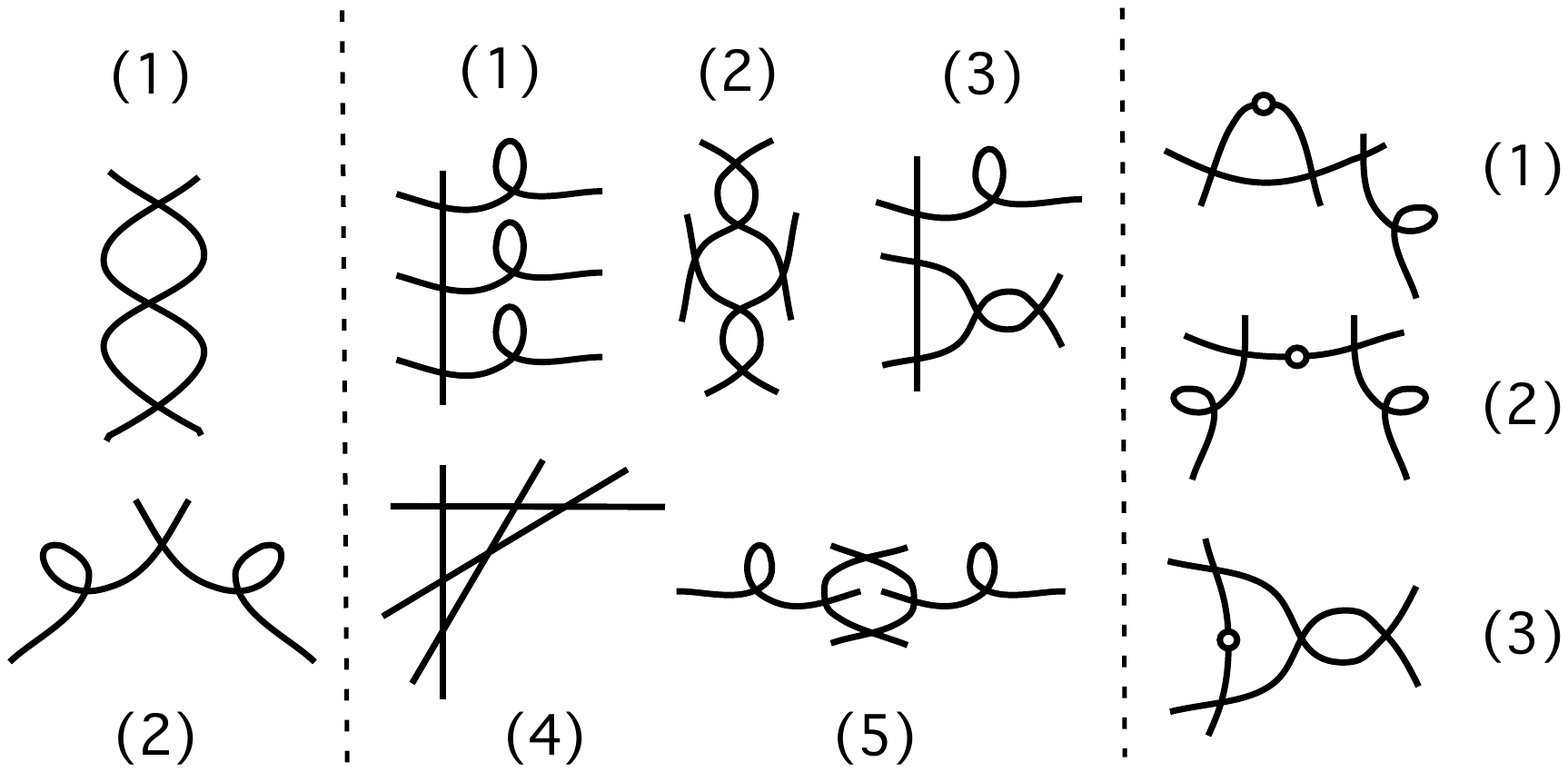}}
\caption{The zero-strata of $\SM_{2}$, $\SM_{3}$, and $\SM_{2,1}$.}\label{F:ZeroStrata}
\end{figure}

\begin{example}[$\M_{2}$]\label{E:M2}
By Lemma \ref{L:RelativePic}, the relative $\Q$-Picard group of the universal curve $\C \rightarrow \SM_{2}$ is given by
$$
\Pic_{\Q}(\C/\SM_{2})=\Q\{\omega_{\pi}\}.
$$
Thus, any numerically non-trivial $\pi$-nef line bundle on $\C$ is ample, and induces the trivial extremal assignment $\Z(G) = \emptyset$.

In fact, it is easy to verify that there are no non-trivial extremal assignments over $\SM_{2}$ directly from the axioms. Let $G_1$,  $G_2$ be dual graphs corresponding to thee two zero-strata pictured in Figure \ref{F:ZeroStrata}. By axiom 2, any extremal assignment which picks out one vertex from $G_1$ (or $G_2$) must pick out both vertices, which contradicts axiom 1. We conclude that $\Z(G_1)=\Z(G_2)=\emptyset$. By axiom 3, $\Z$ must be the trivial extremal assignment. In sum, $\SM_{2}$ is the unique stable modular compactification of $\M_{2}$.
\end{example}

\begin{example}[$\M_{3}$]\label{E:M3} By Lemma \ref{L:RelativePic}, the relative $\Q$-Picard group of the universal curve $\C \rightarrow \SM_{3}$ is given by
$$
\Pic_{\Q}(\C/\SM_{3})=\Q\{\omega_{\pi}, E\},
$$
where $E:=E_{1}$ is the divisor of elliptic tails in the universal curve. Intersecting the divisor $a\omega_{\pi}+bE$ ($a,b \in \Q$) with the irreducible components of vital stratum (1) in Figure \ref{F:ZeroStrata}, we deduce the inequalities $a+3b \geq 0$ and $a-b \geq 0$. One easily checks that any divisor whose coefficients satisfy these two inequalities automatically satisfies the inequalities arising from the irreducible components in strata (2)-(4). Thus, the nef cone $\overline{\text{N}}^{1}_{+}(\C/\SM_{3}) \subset \Pic_{\Q}(\C/\SM_{3})$ is defined by
$$
\overline{\text{N}}^{1}_{+}(\C/\SM_{3})=\Q_{\geq 0}\{\omega_{\pi}(E), \omega_{\pi}(-E/3)\}.
$$
Thus, the relative cone of curves has two extremal faces, namely $\omega_{\pi}(-E/3)^{\perp}$ and $\omega_{\pi}(E)^{\perp}$. One easily checks that the nef divisor $\omega_{\pi}(E)$ has degree zero on an irreducible component of a fiber of the universal curve $\C \rightarrow \SM_{3}$ iff this component is contained in the divisor $E$ (i.e. if it is an elliptic tail). Thus, the extremal assignment induced by this divisor coincides with Example \ref{E:FirstAssignments} (2), and the corresponding moduli space replaces elliptic tails by cusps.

On the other hand, one easily checks that $\omega_{\pi}(-E/3)$ has degree zero on a fiber of $\C \rightarrow \SM_{3}$ iff it has the form $R \cup E_1 \cup E_2 \cup E_3$, where $R$ is a smooth rational curve attached to three distinct elliptic tails $E_1$, $E_2$, and $E_3$. Since the unique singularity of type $(0,3)$ is the rational triple point (i.e. the union of the 3 coordinate axes in $\mathbb{A}^{3}$), such curves are replaced in $\SM_{3}(\Z)$ by curves of the form $E_{1} \cup E_{2} \cup E_{3}$ where the three elliptic tails meet in a rational triple point.
\end{example}

\begin{figure}
\scalebox{.50}{\includegraphics{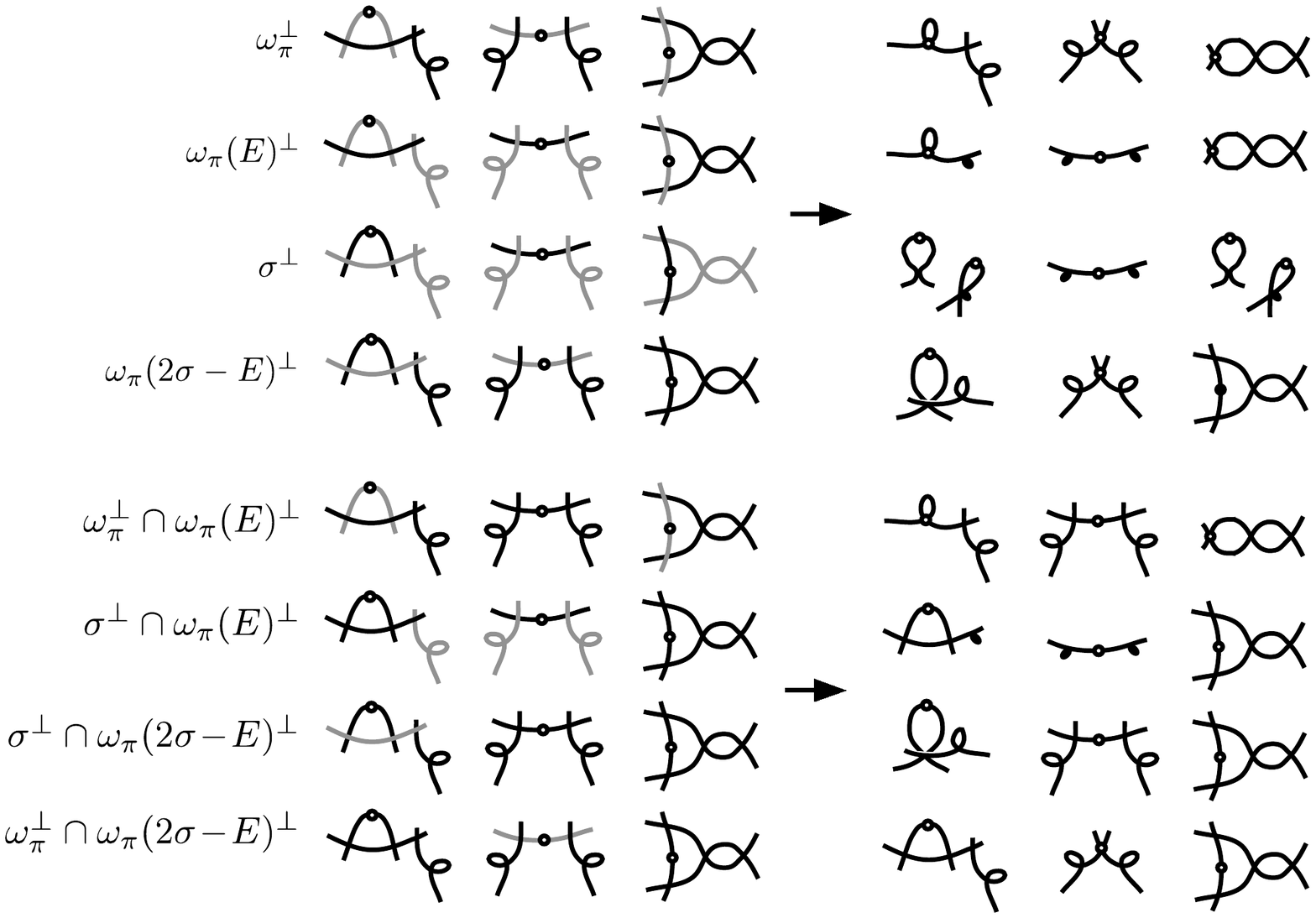}}
\caption{Faces of the relative cone of curves $\overline{\text{N}}_{1}^{+}(\C/\SM_{2,1})$.}\label{F:M_{2,1}Graphs}
\end{figure}
\begin{example}[$\M_{2,1}$]\label{E:M21}
By Lemma \ref{L:RelativePic}, the relative $\Q$-Picard group of the universal curve $\C \rightarrow \SM_{2,1}$ is given by
$$
\Pic_{\Q}(\C/\SM_{2,1})=\Q\{\omega_{\pi}, \sigma, E\},
$$
where $\sigma:=\sigma_1$ is the universal section, and $E:=E_{1, \emptyset}$ is the divisor of unmarked elliptic tails in the universal curve. Intersecting the divisor $a\omega_{\pi}+b\sigma+cE$ ($a,b,c \in \Q$) with the three irreducible components of vital strata (1)-(3) in Figure \ref{F:ZeroStrata}, we deduce the following inequalities for the nef cone:
\begin{eqnarray*}
\text{\underline{Stratum 1}:}&\text{\underline{Stratum 2}:}&\text{\underline{Stratum 3}:}\\
b \geq 0 & a-b \geq 0 & b \geq 0\\
a+c \geq 0& b+2c \geq 0 & a \geq 0\\
a-c \geq 0 & b+2c \geq 0 & a \geq 0
\end{eqnarray*}
One easily checks that this intersection of half-spaces is simply the polyhedral cone generated by the vectors $\{ (1,0,0), (1,0,1), (0,1,0), (1,2,-1) \}$. Thus, the nef cone $\overline{\text{N}}^{1}_{+}(\C/\SM_{2,1}) \subset \Pic_{\Q}(\C/\SM_{2,1})$ is defined by
$$
\overline{\text{N}}^{1}_{+}(\C/\SM_{2,1})=\Q_{\geq 0}\{\omega_{\pi},\, \omega_{\pi}(E),\, \sigma,\, \omega_{\pi}(2\sigma-E)\}.
$$
It follows that the cone of curves $\overline{\text{N}}_{1}^{+}(\C/\SM_{2,1})$ has eight extremal faces: the codimension-one faces are given by $\omega_{\pi}^{\perp},\, \omega_{\pi}(E)^{\perp},\, \sigma^{\perp},\, \omega_{\pi}(2\sigma-E)^{\perp}$, while the codimension-two faces are given by $\omega_{\pi}^{\perp} \cap \omega_{\pi}(E)^{\perp},\, \omega_{\pi}(E)^{\perp} \cap \sigma^{\perp},\,  \sigma^{\perp} \cap \omega_{\pi}(2\sigma-E)^{\perp},\, \omega_{\pi}(2\sigma-E)^{\perp} \cap \omega_{\pi}^{\perp}$. 

The irreducible components of the vital strata (1)-(3) contained in these faces are displayed in Figure \ref{F:M_{2,1}Graphs}. In addition, we have indicated the singular curves that arise in the alternate moduli functors associated to these faces: For example, associated to $\omega_{\pi}^{\perp}$, we see only nodal curves, but the marked point is allowed to pass through the node. Associated to $\omega_{\pi}(E)^{\perp}$, we see the same phenomenon as well as elliptic tails replaced by cusps. Associated to $\sigma^{\perp}$, we see genus-one bridges being replaced by the two isomorphism classes of singularities of type (1,2), namely tacnodes and a planar cusp with a smooth transverse branch. Finally, associated to $\omega_{\pi}(2\sigma-E)^{\perp}$, we see both an unmarked rational curve replaced by a rational triple point and a marked rational curve replaced by a marked node.

\end{example}

\section{The moduli stack of (all) curves (by Jack
  Hall)}\label{S:Stack} 
The purpose of this appendix is to prove that the moduli stack of
curves is algebraic. The statement is well-known to experts and
appears in \cite[Proposition 2.3]{DHS}, but it seems worthwhile to
give a self-contained proof which does not rely on the theory of
Artin approximation. In addition, corollaries \ref{C:Stackgn} and
\ref{C:Stackgne} are used in the main body of the paper and do not
appear elsewhere in the literature. Throughout this appendix, we
follow the notations and terminology of \cite{LMB}. 

Let $(\Aff/S)$ denote the category of affine $S$-schemes. If $S =
\Spec \mathbb{Z}$, we write simply $(\Aff)$. We define a category
$\U$, fibered in groupoids over $(\Aff)$, whose objects are flat,
proper, finitely-presented morphisms of relative dimension one $\C \to
S$, where $\C$ is an algebraic space and $S$ is an affine scheme, and
whose arrows are Cartesian diagrams: 
  \[
  \xymatrix{\C' \ar[r] \ar[d] & \C \ar[d] \\ S' \ar[r] & S}
  \]
Clearly, $\U$ is a stack over $\Spec \mathbb{Z}$. We will prove
\begin{theorem}\label{T:Stack}
  $\U$ is an algebraic stack, locally of
  finite type over $\Spec \mathbb{Z}$.
\end{theorem}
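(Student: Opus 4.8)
The plan is to verify that $\U$ satisfies Artin's criteria for algebraicity, but in the sharpened form of Hall and Hall--Rydh which avoids Artin approximation entirely. Concretely, $\U$ will be an algebraic stack, locally of finite presentation over $\Spec\mathbb{Z}$ (hence locally of finite type, since $\Spec\mathbb{Z}$ is Noetherian), once we check: (i) $\U$ is a stack for the fppf topology; (ii) $\U$ is limit-preserving; (iii) the diagonal $\U\to\U\times_{\Spec\mathbb{Z}}\U$ is representable by algebraic spaces; (iv) $\U$ is homogeneous, i.e.\ satisfies the Rim--Schlessinger gluing condition; (v) formal deformations in $\U$ are effective; and (vi) the deformation-obstruction theory of $\U$ is governed by coherent functors in the sense of Auslander, which by Hall's criterion forces openness of versality. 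Conditions (i), (ii), (iv) are formal; the real content sits in (iii), (v), (vi).

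I would dispatch the formal points first. For (i), effective fppf descent for flat, proper, finitely-presented algebraic spaces over affine schemes shows $\U$ is an fppf stack. For (ii), limit-preservation holds because any flat, proper, finitely-presented algebraic space over a ring $A$ is already defined over a finitely-generated $\mathbb{Z}$-subalgebra of $A$ and behaves well under filtered colimits. For (iv), the existence of a cotangent complex for $\C\to\Spec A$ provides the standard deformation theory and in particular the Rim--Schlessinger condition. For (iii), given families $\C_1,\C_2\to S$ over an affine scheme $S$, I would invoke representability of $\underline{\mathrm{Hom}}_S(\C_1,\C_2)$ by an algebraic space locally of finite presentation over $S$ --- valid because $\C_1\to S$ is flat, proper, finitely presented and $\C_2\to S$ is finitely presented and separated --- and then realize $\mathrm{Isom}_S(\C_1,\C_2)$ as an open subspace, which is separated and locally of finite presentation over $S$.

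The substance is the obstruction theory. For $\C\to\Spec A$ in $\U$ and a square-zero extension $A'\twoheadrightarrow A$ with kernel an $A$-module $M$, the groupoid of flat lifts of $\C$ over $\Spec A'$ is controlled by the cotangent complex $L_{\C/A}$: lifts form a pseudo-torsor under $\mathrm{Ext}^1_{\O_\C}(L_{\C/A},\O_\C\otimes^{\mathbb L}_A M)$, the obstruction to lifting lies in $\mathrm{Ext}^2_{\O_\C}(L_{\C/A},\O_\C\otimes^{\mathbb L}_A M)$, and infinitesimal automorphisms are $\mathrm{Hom}_{\O_\C}(L_{\C/A},\O_\C\otimes_A M)$. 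Since $\C$ is proper over $A$, each of these, as a functor of $M$, is a coherent functor --- it is assembled from $\mathrm{Ext}$-groups of coherent sheaves on a proper algebraic space over $A$, hence is half-exact and commutes with filtered colimits --- so Hall's openness-of-versality criterion gives (vi). For (v), I would appeal to Grothendieck's existence theorem (formal GAGA) for proper algebraic spaces over a complete Noetherian local ring to promote a compatible system $\{\C_n\to\Spec(A/\mathfrak{m}^{n+1})\}$ to an honest family over $\Spec A$; this is the step that classically would be handled by Artin approximation, and replacing it by formal GAGA is what makes the proof self-contained.

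The main obstacle, as I see it, is twofold and lies in (v)--(vi): establishing cleanly that the $\mathrm{Ext}$-obstruction theory above is coherent and that Hall's criterion applies, and handling the bookkeeping forced by permitting the total space $\C$ to be an algebraic space rather than a scheme --- one cannot shortcut any of this by embedding $\C$ in a projective space, which is exactly why the formal-GAGA input must be taken in its algebraic-space form. Once Theorem \ref{T:Stack} is established, Corollaries \ref{C:Stackgn} and \ref{C:Stackgne} follow by passing to open substacks and representable constructions: $\U_{g,n}$ lies over the open locus of $\U$ parametrizing connected, reduced, purely one-dimensional curves of arithmetic genus $g$, adjoined with $n$ sections of the (proper, finitely-presented) universal curve, and the finite-type assertion of Corollary \ref{C:Stackgne} follows from boundedness of families of reduced curves having a bounded number of irreducible components.
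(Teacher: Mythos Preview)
Your proposal is correct but takes a genuinely different route from the paper. The paper's appendix (written, as it happens, by Jack Hall) proceeds by directly constructing a smooth atlas rather than by verifying Artin-type axioms. First, a lemma shows that any object $\C \to S$ of $\U$ is \'etale-locally projective: since the fibers are one-dimensional they are projective, and one lifts an ample line bundle from the special fiber over a strictly henselian base. This reduces representability of the diagonal to the classical Hilbert scheme for projective morphisms. Second, a deformation-theoretic lemma shows that for any embedding $C \hookrightarrow \P^n$ with $H^1(C, i^*T_{\P^n}) = 0$, the forgetful map $\Def_{C \subset \P^n} \to \Def_C$ is formally smooth; since every curve admits such an embedding, open subschemes of Hilbert schemes assemble into a smooth surjective cover of $\U$. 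Your approach instead verifies the Hall--Rydh refinement of Artin's criteria, trading the explicit atlas for coherence of the cotangent-complex obstruction theory together with formal GAGA for proper algebraic spaces. The paper's argument is more elementary and concrete---it exploits the low fiber dimension to secure \'etale-local projectivity and thereby sidesteps the algebraic-space subtleties you flag---while your argument is more systematic and would transport with little change to higher-relative-dimension moduli problems. In particular, your remark that ``one cannot shortcut any of this by embedding $\C$ in a projective space'' is slightly off the mark: the paper does exactly this, \'etale-locally, and that is the crux of its strategy.
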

\begin{proof}[Proof of Theorem \ref{T:Stack}]
To prove that $\U$ is algebraic and
  locally of finite type over $\mathbb{Z}$, we must show:
  \begin{enumerate} 
  \item the diagonal $\Delta : \U \to \U\times \U$ is
    representable, quasi-compact, and separated. This is done in
    Section B.1.
  \item There is an algebraic space $U$, locally of finite type over
    $\Spec \mathbb{Z}$, together with a smooth, surjective 1-morphism $U \to
    \U$. This is done in Section B.2.
  \end{enumerate}
\end{proof}
Let $\U_{g,n}$ (resp. $\U_{g,n,d}$) be the stack of flat, proper,
finitely presented morphisms $\C \to S$, together with $n$
sections, whose geometric fibers are reduced, connected curves of
arithmetic genus $g$ (resp. with no more than $d$ irreducible
components). 

The following corollaries of Theorem \ref{T:Stack} will be proved in
Section B.3.
\begin{corollary}\label{C:Stackgn}
  $\U_{g,n}$ is an algebraic stack, locally of finite type over
  $\Spec \mathbb{Z}$.  
\end{corollary}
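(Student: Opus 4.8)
The plan is to derive Corollary \ref{C:Stackgn} from Theorem \ref{T:Stack} by two essentially formal manipulations: adding the $n$ marked sections, which is a relatively representable (indeed proper) operation and therefore preserves algebraicity, and then restricting to the locus where the geometric fibres are reduced, connected curves of arithmetic genus $g$, which is an open condition on the base.

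First I would observe that $\U$ carries a tautological universal family $\pi \colon \C_{\U} \to \U$, and that $\pi$ is representable by algebraic spaces and is proper, flat, and of finite presentation. Indeed, for any affine scheme $S$ and any $1$-morphism $S \to \U$ classifying a family $\C \to S$, the base change $\C_{\U} \times_{\U} S$ is canonically $\C$, which by the definition of $\U$ is a proper, flat, finitely-presented algebraic space over $S$; these properties of a representable morphism may be checked after such base changes. Consequently $\C_{\U}$ is itself an algebraic stack, locally of finite type over $\Spec \mathbb{Z}$. The stack $\U^{(n)}$ whose objects are families in $\U$ equipped with $n$ ordered sections is then canonically identified with the $n$-fold fibre product
\[
\C_{\U} \times_{\U} \C_{\U} \times_{\U} \cdots \times_{\U} \C_{\U},
\]
since to give a section of $\C \to S$ is exactly to give a $1$-morphism $S \to \C_{\U}$ lifting $S \to \U$. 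In particular $\U^{(n)} \to \U$ is representable, proper and of finite presentation, so $\U^{(n)}$ is again an algebraic stack, locally of finite type over $\Spec \mathbb{Z}$.

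It then remains to identify $\U_{g,n}$ with an open substack of $\U^{(n)}$. Let $(\C \to S, \{\sigma_i\}_{i=1}^{n})$ be the object of $\U^{(n)}$ corresponding to a morphism from a scheme $S$, and write $\pi \colon \C \to S$, a flat, proper, finitely-presented morphism of relative dimension one. I would invoke three standard facts about such a $\pi$: the set of $s \in S$ with $\C_{\overline{s}}$ geometrically reduced is open \cite{EGAIV}; the set of $s$ with $\C_{\overline{s}}$ geometrically connected is open \cite{EGAIV}; and the function $s \mapsto \chi(\C_{\overline{s}}, \O_{\C_{\overline{s}}})$ is locally constant on $S$, so the locus where it equals $1-g$ is open and closed. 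On the intersection $S^{\circ}$ of the first two open sets one has $H^{0}(\C_{\overline{s}}, \O_{\C_{\overline{s}}}) = \kappa(\overline{s})$, so over $S^{\circ}$ the arithmetic genus of $\C_{\overline{s}}$ equals $1 - \chi(\C_{\overline{s}}, \O_{\C_{\overline{s}}})$; moreover $\C_{\overline{s}}$, being reduced, connected, and of dimension $\le 1$, is automatically of pure dimension one, since an isolated point of it would be a connected component. Hence these three open conditions together cut out precisely $\U_{g,n}$ inside $\U^{(n)}$, and since all three descend along a smooth atlas of $\U^{(n)}$, $\U_{g,n}$ is an open substack. An open substack of an algebraic stack locally of finite type over $\Spec \mathbb{Z}$ is again such a stack, which proves the corollary.

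There is no genuine obstacle beyond Theorem \ref{T:Stack} itself; the only thing to watch is the bookkeeping. One must check that the classical openness and constancy theorems are being applied to a morphism that is flat, proper, and of finite presentation --- all of which is guaranteed by the definition of $\U$ --- and that ``arithmetic genus $g$'' is correctly phrased via the Euler characteristic rather than via $h^{1}(\O)$ before connectedness and reducedness are known. The new content of Appendix B is concentrated in Theorem \ref{T:Stack}, to which this corollary is a routine appendage.
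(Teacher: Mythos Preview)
Your proof is correct and essentially identical to the paper's own argument; the only difference is the order of the two steps. The paper first cuts out the open substack $\U_g \subset \U$ where the geometric fibres are reduced, connected, and of arithmetic genus $g$ (citing the same EGA IV results you do), and then adds the sections by observing that $\U_{g,1} \to \U_g$ is representable by the universal curve and hence $\U_{g,n} \simeq \U_{g,1} \times_{\U_g} \cdots \times_{\U_g} \U_{g,1}$, whereas you add the sections first and impose the open conditions afterwards. Both orders work for the same reasons, and your bookkeeping remarks (phrasing the genus via $\chi$, checking pure dimension one) are accurate refinements rather than deviations.
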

\begin{corollary}\label{C:Stackgne}
  $\U_{g,n,d}$ is an algebraic stack, of finite type over
  $\Spec\mathbb{Z}$.  
\end{corollary}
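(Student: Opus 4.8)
The plan is to realise $\U_{g,n,d}$ as a quasi-compact open substack of $\U_{g,n}$ and then quote Corollary~\ref{C:Stackgn}. The first task is openness. For a flat, proper, finitely presented morphism $\C\to S$ with geometrically reduced connected $1$-dimensional fibres, the function $s\mapsto \#\{\text{irreducible components of }\C_{\bar s}\}$ is constructible, and it can only increase under specialisation: working over the spectrum of a discrete valuation ring (to which one reduces), the scheme-theoretic closures in the total space of the irreducible components of the geometric generic fibre are flat over the base and their union is the whole total space, so a standard argument shows the special fibre has at least as many irreducible components as the generic one. Hence the locus $\{\,s: \#\{\text{components of }\C_{\bar s}\}\le d\,\}$ is constructible and stable under generisation, so it is open. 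Applied to the universal curve over $\U_{g,n}$ this shows $\U_{g,n,d}\hookrightarrow \U_{g,n}$ is an open immersion; by Corollary~\ref{C:Stackgn}, $\U_{g,n,d}$ is an algebraic stack, locally of finite type over $\Spec\mathbb{Z}$.

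It remains to prove that $\U_{g,n,d}$ is quasi-compact, which is a boundedness statement. Let $(C,\{p_i\})$ be a geometric point, $\nu\colon \tilde C\to C$ its normalisation, and $\tilde C=\tilde C_1\sqcup\cdots\sqcup\tilde C_k$ the decomposition into smooth connected projective curves, with $k\le d$ and $g_i:=p_a(\tilde C_i)$. From the exact sequence $0\to\O_C\to\nu_*\O_{\tilde C}\to\mathcal Q\to 0$, with $\mathcal Q$ of finite length $\delta(C)$, one computes $\delta(C)=g+(k-1)-\sum_i g_i$; as $\sum_i g_i\ge 0$ this gives $\delta(C)\le g+d-1$, and as $C$ is connected one has $\delta(C)\ge k-1$, hence $g_i\le g$ for all $i$. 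Moreover the number of points of $\tilde C$ lying over the marked points or the singular locus of $C$ is at most $n+2\delta(C)\le n+2(g+d-1)$. Thus the pointed normalisation of $C$ is a disjoint union of at most $d$ smooth pointed curves of genus $\le g$ carrying, in total, at most $n+2(g+d-1)$ marked points --- and these form a bounded family, since the relevant moduli stacks of smooth pointed curves are of finite type over $\Spec\mathbb{Z}$.

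The curve $C$ is then recovered from this pointed normalisation together with the subsheaf $\O_C\subset\nu_*\O_{\tilde C}$, which is supported at the (boundedly many) images of the distinguished points and whose conductor has colength $\le\delta(C)\le g+d-1$. The moduli of such gluing data --- equivalently, of pinchings of a fixed smooth pointed curve along a closed subscheme of bounded colength --- form a family of finite type over the (finite-type) moduli of pointed normalisations, and imposing the $n$ sections is a further finite-type, in fact proper, fibred-product construction. Assembling these pieces produces a scheme of finite type over $\Spec\mathbb{Z}$ with a smooth surjection onto $\U_{g,n,d}$, so $\U_{g,n,d}$ is quasi-compact; together with the first paragraph this shows it is of finite type over $\Spec\mathbb{Z}$.

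The essential difficulty lies in the last paragraph: bounding the moduli of gluing --- or \emph{crimping} --- data when arbitrary, possibly non-Gorenstein, curve singularities are allowed. This is exactly where van der Wyck's theory of crimping data is needed. One could instead try to produce, uniformly in $\U_{g,n,d}$, an ample line bundle of bounded degree and thereby land in a finite-type Hilbert scheme; but exhibiting such a line bundle is itself not purely formal, so the crimping approach is preferable once that machinery is available.
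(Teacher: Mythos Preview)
Your openness argument is fine and essentially equivalent to the paper's one-line citation of \cite[12.2.1(xi)]{EGAIV}. The divergence is in the quasi-compactness step.

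You take the ``normalisation plus crimping data'' route: bound the pointed normalisation (correctly done, with the bound $\delta(C)\le g+d-1$), then assert that the moduli of subalgebras $\O_C\subset\nu_*\O_{\tilde C}$ of bounded colength form a finite-type family. You are candid that this last step is the real content and that it requires van der Wyck's theory of crimping. But as written this is an appeal to machinery neither developed here nor available in the published literature at the time, so the argument is incomplete on its own terms.

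The paper does precisely what you dismiss in your final paragraph: it produces, uniformly for all reduced curves of genus $g$ with at most $d$ components, an ample line bundle of bounded degree and an embedding into $\P^N$ with $N$ bounded. This is Lemma~\ref{L:Boundedness}. The construction is explicit: take $\L=\O_C(Z)$ for a reduced divisor $Z$ of degree $\le d$ meeting the smooth locus of every component, bound the power of $\L$ needed for very ampleness via a cohomology vanishing on the normalisation (using $\delta(p)\le g+d-1$), and then bound the embedding dimension by bounding the embedding dimension of each singular point (again in terms of $\delta$, hence of $g$ and $d$). Once this is done, one lands in finitely many finite-type Hilbert schemes and quasi-compactness follows.

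So the two approaches trade off different pieces of work. Yours pushes the difficulty into the structure of crimping moduli, which is conceptually clean but not self-contained. The paper's approach is more hands-on but entirely elementary once one sees how to bound the embedding dimension of a curve singularity in terms of its $\delta$-invariant; your instinct that this ``is itself not purely formal'' is right, but it is short enough to carry out directly.
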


Throughout this appendix, we make free use of the fact that $\U$ is
limit-preserving. Since any ring can be written as an inductive limit
of finitely-generated $\mathbb{Z}$-algebras, this allows one to check
properties of $\U$ using test schemes which are of finite type over
$\mathbb{Z}$ (in particular, noetherian). 
\begin{lemma}
If $A = \varinjlim_{i}A_{i}$ is an inductive system of rings, then
there is an equivalence of groupoids:
$$
\U(\Spec A) \simeq \varprojlim_{i} \U(\Spec A_{i}).
$$
\end{lemma}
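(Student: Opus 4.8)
The plan is to deduce this from the standard ``limit methods'' of \cite{EGAIV}, transported to algebraic spaces. Write $S_i := \Spec A_i$ and $S := \Spec A$, so $S = \varprojlim_i S_i$. We must show that the natural comparison functor $\varinjlim_i \U(S_i) \to \U(S)$ is an equivalence of groupoids, i.e. that it is essentially surjective (every object of $\U(S)$ descends to some finite stage) and fully faithful (every arrow, and every coincidence of two arrows, descends uniquely). Since the index category may be replaced by a cofinal filtered subcategory, we assume throughout that the system $\{A_i\}$ is filtered. The only point requiring care beyond the scheme-theoretic statements of \cite[\S 8]{EGAIV} is that the total spaces $\C$ are algebraic spaces rather than schemes; for this one invokes the corresponding approximation results for algebraic spaces, in the framework of \cite{LMB} and \cite{Knutson}.

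For essential surjectivity, let $(\C \to S)$ be an object of $\U(S)$. Since $\C \to S$ is proper and $S$ is affine, $\C$ is a quasi-compact, quasi-separated algebraic space and $\C \to S$ is of finite presentation. By absolute noetherian approximation for finitely presented morphisms of algebraic spaces, there is an index $i_0$, a finitely presented morphism of algebraic spaces $\C_{i_0} \to S_{i_0}$, and an isomorphism $\C \simeq \C_{i_0} \times_{S_{i_0}} S$ over $S$. It remains to enlarge $i_0$ so that $\C_{i_0} \to S_{i_0}$ acquires the defining properties of an object of $\U$. Each of these --- flatness, properness, and having all geometric fibers of dimension one --- is limit-stable in the sense of \cite[8.10.5, 11.2.6]{EGAIV}: since it holds after the faithfully flat base change $S \to S_{i_0}$, it holds after base change to some $S_i$ with $i \geq i_0$. (For the fibral dimension one uses that the locus in $S_{i_0}$ over which the fibers have dimension $\leq 1$ is constructible with formation commuting with the limit; properness and flatness are handled directly by the cited results.) Thus $(\C_i \to S_i) \in \U(S_i)$ maps to $(\C \to S)$, as required.

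For full faithfulness, recall that the arrows of the fiber groupoid $\U(S)$ are precisely the isomorphisms $\C \to \C'$ over $S$. Given two objects of $\U(S)$ descending to $(\C_i \to S_i)$ and $(\C_i' \to S_i)$, any $S$-isomorphism $\C \to \C'$ is in particular an $S$-morphism between finitely presented algebraic spaces, so by \cite[8.8.2]{EGAIV} (in its algebraic-space form) it is the base change of some $S_j$-morphism $f_j : \C_j \to \C_j'$ with $j \geq i$, and any two $S_j$-morphisms agreeing after base change to $S$ already agree after base change to some $S_{j'}$. Finally, $f_j$ becomes an isomorphism after base change to $S$; since $f_j$ is proper and finitely presented, the locus in $S_j$ over which $f_j$ is an isomorphism is open (it is the locus where $f_j$ is flat and a monomorphism, both open given properness), and it contains the image of $S$, hence --- by quasi-compactness of $S$ and filteredness of the system --- it contains all of $S_{j'}$ for some $j' \geq j$. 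Then $f_{j'}$ is an isomorphism in $\U(S_{j'})$ lifting the given arrow, proving fullness, while the uniqueness clause above gives faithfulness. The main obstacle is purely bookkeeping at the level of algebraic spaces: once the approximation statements of \cite{EGAIV} are granted in that generality, the argument is formal.
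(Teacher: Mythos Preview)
Your proof is correct and follows essentially the same approach as the paper: both invoke the approximation theorems of \cite{EGAIV} (descent of objects, morphisms, and the properties flat/proper/relative-dimension-one through a filtered limit), transported to algebraic spaces via \cite{LMB} and \cite{Knutson}. The paper's proof is terser—citing \cite[4.18.1]{LMB} for the limit-preserving property of finitely presented algebraic spaces and then \cite[IV.3.1]{Knutson}, \cite[11.2.6]{EGAIV}, \cite[4.1.4]{EGAIV} for properness, flatness, and fiber dimension respectively—while you spell out essential surjectivity and full faithfulness separately; but the content is the same.
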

\begin{proof}
By \cite[4.18.1]{LMB}, the category of finitely-presented algebraic
spaces over affine base-schemes is a limit-preserving stack over
$(\Aff)$. The fact that a proper, flat, relative-dimension one
morphism over $\Spec A$ is induced from a proper, flat,
relative-dimension one morphism over some $\Spec A_{i}$, then follows
from \cite[IV.3.1]{Knutson}, \cite[11.2.6]{EGAIV}, and
\cite[4.1.4]{EGAIV} respectively. 
\end{proof}

\subsubsection*{B.1 Representability of the diagonal}\label{S:Diagonal}
In this section, we prove that the diagonal morphism $\Delta :
\U \to \U \times \U$ is representable, finitely presented and
separated. Equivalently, we must show that if  
$\pi_1 : \C_1 \to S$, $\pi_2 : \C_2 \to S$ are two objects of $\U$,
then the sheaf $\Isom_S(\pi_1,\pi_2)$ is representable by an algebraic
space, finitely presented and separated over $S$.

Recall that if $\pi: \C_1 \rightarrow S$ and $\pi: \C_2 \rightarrow S$
are proper finitely-presented morphisms over an affine scheme $S$ (not
necessarily curves), then $\Hom_S(\pi_1,\pi_2)$ and
$\Isom_S(\pi_1,\pi_2)$ are the sheaves over (\Aff/S) whose sections
over $T \rightarrow S$ are given by $T$-morphisms
(resp. $T$-isomorphisms) $\C_{1} \times_{S} T \to \C_{2} \times_{S}
T$. 

Also recall that if $\C \rightarrow S$ is proper and
finitely-presented, the Hilbert functor $\Hilb_S(\C)$ is the sheaf
over $(\Aff/S)$ whose sections over $T \rightarrow S$ are given by
closed subschemes $\Z \subset \C \times_{S} T$, flat and
finitely-presented over $S$. If $\C \rightarrow S$ is projective, then
$\Hilb_{S}(\C)$ is representable by an $S$-scheme of the form
$\coprod_{p(t) \in \mathbb{Z}[t]}H_{p(t)}$, where $H_{p(t)}$ is a
projective scheme over $S$ parametrizing families with Hilbert
polynomial $p(t)$.

We will show that objects $\C \to S$ of $\U$  are \'etale-locally projective (Lemma
\ref{L:Fppfprojective}) and the representability of
$\Isom_{S}(\pi_1,\pi_2)$ will be deduced from the representability of the Hilbert
scheme for finitely presented projective morphisms. 
\begin{lemma}\label{L:Fppfprojective}
Let $\pi : C \to S$ be a proper, finitely presented morphism of algebraic
  spaces. Let $s\in S$ be a closed point such that $\dim_{\Bbbk(s)}
  C_s \leq 1$, then there is an \'etale neighbourhood 
  $(U,u)$ of $(S,s)$ such that $C\times_S U \to U$ is projective.  
\end{lemma}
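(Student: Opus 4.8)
\noindent The plan is to produce, over an \'etale neighbourhood of $s$, a relatively ample line bundle on $\pi$. I would first make the standard reductions. Since the assertion is local for the \'etale topology on $S$, we may assume $S=\Spec A$; by the usual limit arguments for proper finitely presented morphisms of algebraic spaces (\cite{EGAIV}, \cite{Knutson}) the morphism $\pi$ is then the base change of a proper finitely presented morphism over a finitely generated $\mathbb{Z}$-subalgebra of $A$, so we may assume $A$ is noetherian. Moreover, relative ampleness of a line bundle along a proper finitely presented morphism over a locally noetherian base is an open condition on the base (\cite{EGAIV}, 9.6.4) that descends along limits, so it suffices to treat the henselization $A^{h}$ of $A$ at the maximal ideal $\mathfrak{m}$ corresponding to $s$; thus we may assume $A$ is henselian noetherian local with residue field $k=\kappa(s)$. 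Now $C_{s}$ is a proper algebraic space of dimension $\le 1$ over $k$, hence a projective $k$-scheme (\cite{Knutson}), so it carries an ample line bundle $L_{0}$.

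\noindent The heart of the matter is to lift $L_{0}$ to a line bundle over an \'etale neighbourhood of $s$. For $\pi$ flat --- the case in which the lemma is applied to objects of $\U$ --- I would use the relative Picard stack $\mathcal{P}ic_{C/S}$: it is algebraic and locally of finite presentation over $S$, and its obstruction theory is controlled by $H^{2}(\mathcal{O})$ on the fibres. Since $\dim C_{s}\le 1$, the group $H^{2}(C_{s},\mathcal{O}_{C_{s}})$ vanishes, so $\mathcal{P}ic_{C/S}\to S$ is formally smooth, hence smooth, near the point $[L_{0}]$ of the fibre over $s$. A smooth morphism admits a section through any prescribed fibre point over an \'etale neighbourhood of its image, so we obtain $(U,u)\to(S,s)$ \'etale and a line bundle $\mathcal{L}$ on $C_{U}$ with $\mathcal{L}\vert_{C_{s}}\cong L_{0}$. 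As $\mathcal{L}$ restricts to the ample bundle $L_{0}$ on the fibre over $u$, it is relatively ample after shrinking $U$ around $u$ (\cite{EGAIV}, 9.6.4), and a proper finitely presented morphism carrying a relatively ample line bundle is projective; this proves the lemma in this case, and (by descent of ampleness through the limit defining $A^{h}$) over an \'etale neighbourhood of $s$ in the original $S$.

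\noindent I expect the main obstacle to be precisely this passage from the infinitesimal data at $s$ to a genuine \'etale-local line bundle; in the argument above it is packaged into the smoothness of $\mathcal{P}ic_{C/S}$ and the section property of smooth morphisms, which keeps the proof free of Artin approximation (as the rest of the appendix requires). To obtain the lemma in the stated generality, without assuming $\pi$ flat, one proceeds more directly: lift $L_{0}$ successively across the thickenings $C_{s}\hookrightarrow C\times_{S}\Spec(A/\mathfrak{m}^{n+1})$, the obstructions in $H^{2}(C_{s},\mathcal{O}_{C_{s}}\otimes_{k}\mathfrak{m}^{n+1}/\mathfrak{m}^{n+2})$ again vanishing, to get a line bundle on the formal completion of $C_{\hat{A}}$ along $C_{s}$; algebraize it by Grothendieck's existence theorem for proper finitely presented morphisms of algebraic spaces to a relatively ample line bundle on $C_{\hat{A}}$; and then descend projectivity from $\hat{A}$ to an \'etale neighbourhood, using that ampleness is fpqc-local on the base together with the uniqueness clause of Grothendieck's theorem to make the resulting descent datum effective. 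Either route yields the claim.
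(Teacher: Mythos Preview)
Your overall strategy---reduce to a henselian local base and lift an ample line bundle from the closed fibre---matches the paper's. For the flat case your Picard-stack argument is essentially correct, though you should be aware that invoking the algebraicity of $\mathcal{P}ic_{C/S}$ is a somewhat heavy import; the most general proofs of that (for proper flat algebraic spaces) go through Artin's representability criteria, so the claim that this route is ``free of Artin approximation'' is not automatic. For relative curves one can build the Picard functor more directly from Hilbert schemes, so the argument can be made to work, but this deserves a word.

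The genuine gap is in your general (non-flat) case. Algebraizing the formal line bundle over $\hat{A}$ is fine, but the step ``descend projectivity from $\hat{A}$ to an \'etale neighbourhood'' is exactly the point where Artin approximation normally enters, and your sketch via fpqc descent does not close it: to descend $\hat{L}$ along $\Spec\hat{A}\to\Spec A^{h}$ you would need a descent datum on $C_{\hat{A}\otimes_{A^{h}}\hat{A}}$, and the ``uniqueness clause of Grothendieck's theorem'' does not furnish one, since $\hat{A}\otimes_{A^{h}}\hat{A}$ is not complete along its special fibre. Without further input you cannot get from $C_{\hat{A}}$ projective to $C_{A^{h}}$ projective.

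The paper takes a different and more elementary route that sidesteps this descent entirely. It passes to a \emph{strictly} henselian (and excellent) local base, and then in the scheme case invokes \cite[Prop.~4.1]{SGA4.5}: over such a base, for a proper morphism with fibre dimension $\le 1$, line bundles lift directly from the closed fibre to the total space (this is a consequence of proper base change in \'etale cohomology), so no completion-and-descent is needed. To handle the fact that $C$ is only an algebraic space, the paper does not use Picard machinery at all; instead it reduces to the scheme case by passing to the normalization $C''$ of $C_{\mathrm{red}}$ (finite over $C$ since the base is excellent), showing $C''$ is a scheme via the finite-quotient presentation of normal algebraic spaces \cite[Cor.~16.6.2]{LMB}, and then pulling an ample bundle back from $C''$ to $C$ using surjectivity of $\Pic C\to\Pic C''$ from the exponential sequence. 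This buys a proof that is uniform in the non-flat case and genuinely avoids approximation.
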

\begin{proof}
  The statement is local on $S$ for the \'etale topology and by the
  standard limit methods we reduce immediately to the following situation:
  $S = \Spec R$, where $R$ is an excellent, strictly henselian local
  ring and $s \in S$ is the unique closed point.

  First, we asume that $C$ is a scheme. Take $C_s\to s$ to denote the
  special fiber of $C\to S$. Since $C_s$ is proper and of dimension 1
  over a field, it is manifestly projective. Let $\mathscr{L}_s$ be an
  ample bundle on $C_s$, then by \cite[Proposition 4.1]{SGA4.5}, we
  may lift it to a line bundle $\mathscr{L}$ on $C$. By \cite[Theoreme
  4.7.1]{EGAIII}, one concludes that $\mathscr{L}$ is ample for $C\to
  S$ and thus, $C\to S$ is projective.

  Next, assume that $C$ is a reduced, normal algebraic space, then by
  \cite[Cor. 16.6.2]{LMB}, there is an isomorphism of algebraic
  spaces $[C'/G]\to C$, where $C'$ is a scheme and $G$ is a finite
  group acting freely on $C'$. By the above, $C'$ is a projective and
  since the quotient of a finite group acting freely on a projective
  scheme is projective, one concludes $C$ is projective. 

  Take $C$ to be general, then the usual exponential
  sequence, coupled with basic facts about the  
  \'etale site on $C$, implies that if $C''$ is the normalization
  of the reduction of $C$, the pullback $\Pic C \to \Pic C''$ is
  surjective. Since $S$ is excellent, $C'' \to S$ is proper and
  satisfies the hypotheses of the previous paragraph and is
  consequently projective. Lifting an ample line bundle of $C''$ to
  $C$, we conclude that this lift is ample ($C'' \to C$ is finite) and
  hence $C\to S$ is projective.
\end{proof}
\begin{corollary}\label{C:RepDiagonalStep}\label{C:RepDiagonal}
Suppose that $\pi_1 : \C_1 \to S$, $\pi_2 : \C_2 \to S$ are objects of
$\U$, then the sheaves $\Hom_S(\pi_1,\pi_2)$ and
$\Isom_{S}(\pi_1,\pi_2)$ are both representable by finitely presented
and separated algebraic $S$-spaces.
\end{corollary}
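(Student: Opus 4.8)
The plan is to reduce the representability of $\Isom_S(\pi_1,\pi_2)$ to the representability of Hilbert schemes for finitely-presented \emph{projective} morphisms, using Lemma \ref{L:Fppfprojective} to make everything projective after an \'etale base change, and then descend. First I would recall the standard identification of $\Hom_S(\pi_1,\pi_2)$ with an open subfunctor of $\Hilb_S(\C_1 \times_S \C_2)$: a $T$-morphism $\C_{1,T} \to \C_{2,T}$ is the same as its graph, a closed subscheme of $\C_{1,T} \times_T \C_{2,T}$ which projects isomorphically onto $\C_{1,T}$. The condition that a given flat family of closed subschemes $\Z \subset \C_1 \times_S \C_2$ projects isomorphically onto $\C_1$ is representable by an open subscheme of the base (properness of $\C_2 \to S$ makes the first projection $\Z \to \C_1$ proper, and the locus where a proper morphism, flat over the base, is an isomorphism is open). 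Hence $\Hom_S(\pi_1,\pi_2)$ is an open subfunctor of $\Hilb_S(\C_1 \times_S \C_2)$, and $\Isom_S(\pi_1,\pi_2) \subset \Hom_S(\pi_1,\pi_2)$ is cut out by the further open condition that the map is an isomorphism.

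Next I would handle the representability of $\Hilb_S(\C_1 \times_S \C_2)$. Since $\U$ is limit-preserving (the lemma following Theorem \ref{T:Stack}), I may assume $S$ is noetherian, and by working \'etale-locally on $S$ and invoking Lemma \ref{L:Fppfprojective} (applied to $\C_1 \times_S \C_2 \to S$, whose fibers are of dimension $\leq 2$ — here one needs a version of the lemma for relative dimension two, or alternatively applies the lemma to $\C_1$ and $\C_2$ separately and uses that a product of projective $S$-schemes is projective over $S$), I may assume $\C_1 \times_S \C_2 \to S$ is projective. Then $\Hilb_S(\C_1 \times_S \C_2)$ is representable by a disjoint union of projective $S$-schemes, one for each Hilbert polynomial. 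Passing to the open subfunctors above, $\Hom_S(\pi_1,\pi_2)$ and $\Isom_S(\pi_1,\pi_2)$ are representable by $S$-schemes \'etale-locally on $S$; since the Isom sheaf satisfies \'etale descent, it is representable by an algebraic space over $S$, and finite presentation follows from the noetherian reduction together with the fact that each piece of the Hilbert scheme is of finite type.

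For separatedness of $\Isom_S(\pi_1,\pi_2) \to S$ I would verify the valuative criterion: given a DVR $R$ with fraction field $K$, two isomorphisms $\C_{1,R} \to \C_{2,R}$ agreeing over $K$ must agree, which holds because $\C_{1,R}$ is reduced and flat over $R$ (so $\C_{1,K}$ is scheme-theoretically dense) and $\C_{2,R} \to \Spec R$ is separated — two $R$-morphisms into a separated space agreeing on a dense subscheme of a reduced source coincide. The main obstacle I anticipate is the first reduction step: making $\C_1 \times_S \C_2$ (or its factors) projective \'etale-locally requires Lemma \ref{L:Fppfprojective} exactly as stated for curves, so one either extends that lemma to relative dimension two or, more cleanly, reduces the Hilbert scheme of the product to the graph construction living inside $\C_1 \times_S \C_2$ and embeds this into a projective bundle over $\C_1$ using an \'etale-local projective embedding of $\C_2$; stitching together these local constructions compatibly is the delicate point, but \'etale descent for the Isom-sheaf (which is automatically a sheaf) resolves it.
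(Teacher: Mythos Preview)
Your approach is essentially the paper's: pass to an \'etale cover where $\C_1$ and $\C_2$ become projective (applying Lemma \ref{L:Fppfprojective} to each factor separately, as you suggest), realize $\Hom$ and $\Isom$ as open subfunctors of the Hilbert scheme of the product, and descend. Two points deserve sharpening.

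First, your separatedness argument invokes reducedness of $\C_{1,R}$, but objects of $\U$ need not have reduced fibers, so $\C_{1,R}$ need not be reduced. Fortunately flatness of $\C_{1,R}$ over the DVR $R$ alone implies the generic fiber is scheme-theoretically dense, which is what you actually use. (The paper instead argues directly that the diagonal of $\Hom_S$ over $S$ is a closed immersion, as the locus where two morphisms to the separated target $\C_2$ agree.)

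Second, and more substantively, your claim that finite presentation ``follows from the noetherian reduction together with the fact that each piece of the Hilbert scheme is of finite type'' skips the crucial point: the Hilbert scheme is an \emph{infinite} disjoint union indexed by Hilbert polynomials, so you must explain why graphs of morphisms $\C_{1,T} \to \C_{2,T}$ lie in only finitely many components. The paper handles this by fixing a Segre embedding of $\C_{1,T} \times_T \C_{2,T}$ and invoking a boundedness result (\cite[Theorem B.7]{FGA_exp}) to conclude that all such graphs share a single Hilbert polynomial. Without some argument of this kind you only obtain that $\Hom_S$ is locally of finite type, not quasi-compact.
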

\begin{proof}
By Lemma \ref{L:Fppfprojective}, there is an \'etale surjection
$T \to S$ such that for $i=1$, $2$, the pullbacks, $\pi_{i,T} : \C_{i,T}\to T$, are
projective, flat and finitely presented. 
The inclusions $\Isom_{T}(\pi_{1,T},\pi_{2,T}) \subset
\Hom_{T}(\pi_{1,T},\pi_{2,T}) \subset \Hilb_{T}(\C_{1,T} \times_{T} \C_{2,T})$ are
representable by finitely-presented open immersions.\footnote{$\U$ is
  limit preserving, so we may assume that $T$ is noetherian. The
  assertion for the second inclusion follows from the first. Indeed,
  this inclusion is given by the graph homomorphism and those families
  of closed subschemes of $\C_{1,T} \times_T \C_{2,T}$ which are
  graphs are precisely those families for which projection onto the
  first factor is an isomorphism. The first inclusion is covered by
  \cite[Prop. 4.6.7(ii)]{EGAIII}.} From the existence of the Hilbert
scheme for finitely presented projective morphisms, we make the
following two observations: 
\begin{enumerate}
\item $\Hom_S(\pi_1,\pi_2) \times_{S} T \simeq
  \Hom_T(\pi_{1,T},\pi_{2,T})$ is representable by a separated and
  locally of finite type $S$-scheme. In particular, the morphism
  $\Hom_T(\pi_{1,T},\pi_{2,T}) \to \Hom_S(\pi_1,\pi_2)$ is \'etale and
  surjective. 
\item  The map 
$\Hom_T \times_{\Hom_S }  \Hom_T \rightarrow \Hom_T \times_{S} \Hom_T$
is a quasicompact, closed immersion. Indeed, this is simply the locus
where two separated morphisms of schemes agree.
\end{enumerate}
Putting these together, one concludes (by the definition of an
algebraic space) that $\Hom_S(\pi_1,\pi_2)$ is representable by a
separated and locally of finite type algebraic $S$-space. Since
finitely presented open immersions are local for the \'etale topology,
we deduce the corresponding result for $\Isom_S(\pi_1,\pi_2)$.

All that remains to check is that $\Hom_T$ is quasicompact. Taking a
Segre embedding of the product $\C_{1,T}\times_T \C_{2,T}$ into some
fixed projective space $\mathbb{P}^M_T$, an
application of \cite[Theorem B.7]{FGA_exp} allows one to conclude that
 graphs of morphisms $\C_1 \times_S T \rightarrow \C_2 \times_S T$ all
 have the same Hilbert polynomial and consequently, $\Hom_T$ is
 quasi-projective and in particular, quasi-compact.
\end{proof}
\subsubsection*{B.2 Existence of a smooth cover}\label{S:Cover}
In this section, we prove that $\U$ admits a smooth surjective cover
by a scheme $U \rightarrow \U$. The key point is that if $C$ is a
proper one-dimensional scheme over an algebraically closed field, then
$C$ admits an embedding $C \hookrightarrow \P^n$ such that the natural
map from embedded deformations to abstract deformations is smooth and
surjective (Lemma \ref{L:DefSurjective}) at $[C]$. Thus, we may build
our atlas as a disjoint union of open subschemes of Hilbert
schemes. The key lemma is well-known in the local complete
intersection case, but we need the statement for an arbitrary
one-dimensional scheme, and this requires the cotangent
complex. Throughout this section, $k$ denotes an algebraically closed
ground field. If $X \subset Y$ are $k$-schemes, we let $\Def_X$ and
$\Def_{X \subset Y}$ denote the functor of abstract and embedded
deformations respectively. (For the basic definitions of deformation
theory, the reader may consult \cite{Sernesi}). 

\begin{lemma}\label{L:DefSurjective} Suppose that we have an embedding $i:X \hookrightarrow Y$, where $X$ is a proper $k$-scheme and $Y$ is a smooth $k$-scheme. If $H^1(X,i^*T_{Y/k})=0$, then \emph{$\Def_{X \subset Y} \rightarrow \Def_{X}$} is formally smooth.
\end{lemma}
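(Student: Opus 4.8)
The plan is to run the standard infinitesimal lifting criterion for formal smoothness of a morphism of deformation functors, using the transitivity triangle of cotangent complexes to compare the tangent--obstruction theory of $\Def_{X\subset Y}$ (the local Hilbert functor, with $Y$ fixed) with that of $\Def_X$. Write $L_{X/k}$, $L_{Y/k}$, $L_{X/Y}$ for the relevant (absolute and relative) cotangent complexes and recall the transitivity triangle $Li^*L_{Y/k}\to L_{X/k}\to L_{X/Y}\xrightarrow{+1}$. Applying $\Ext^\bullet(-,\O_X)$ to this triangle produces a long exact sequence
\[
\cdots\to\Ext^i(L_{X/Y},\O_X)\to\Ext^i(L_{X/k},\O_X)\to\Ext^i(Li^*L_{Y/k},\O_X)\to\Ext^{i+1}(L_{X/Y},\O_X)\to\cdots.
\]
Since $Y$ is smooth over $k$, $L_{Y/k}\simeq\Omega_{Y/k}$ is locally free, so $\Ext^i(Li^*L_{Y/k},\O_X)=H^i(X,i^*T_{Y/k})$. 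On the other hand $\Ext^i(L_{X/k},\O_X)=T^i_X$ is the tangent--obstruction theory of $\Def_X$ ($T^1_X$ the tangent space, obstructions in $T^2_X$), while $\Ext^i(L_{X/Y},\O_X)$ is the tangent--obstruction theory of $\Def_{X\subset Y}$: here $L_{X/Y}$ sits in cohomological degrees $\le-1$ with $\mathcal H^{-1}(L_{X/Y})=I/I^2$, so $\Ext^1(L_{X/Y},\O_X)=H^0(X,N_{X/Y})$ and the Hilbert obstruction lies in $H^1(X,N_{X/Y})\subset\Ext^2(L_{X/Y},\O_X)$. All these identifications are functorial in the forgetful morphism $F\colon\Def_{X\subset Y}\to\Def_X$, and the maps in the long exact sequence are precisely the induced maps on tangent spaces and the maps compatible with obstruction classes (Illusie's formalism).

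Granting this, the hypothesis $H^1(X,i^*T_{Y/k})=0$ extracts exactly two facts from the stretch
\[
\Ext^1(L_{X/Y},\O_X)\to T^1_X\to H^1(X,i^*T_{Y/k})\to\Ext^2(L_{X/Y},\O_X)\to T^2_X
\]
of the sequence: first, the tangent map $H^0(X,N_{X/Y})=\Ext^1(L_{X/Y},\O_X)\twoheadrightarrow T^1_X$ is surjective; second, the obstruction map $\Ext^2(L_{X/Y},\O_X)\hookrightarrow T^2_X$ is injective.

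Now I would verify the lifting criterion directly. Let $0\to(\varepsilon)\to A'\to A\to0$ be a small extension of Artinian local $k$-algebras, and suppose given $\eta_A\in\Def_{X\subset Y}(A)$ and $\xi_{A'}\in\Def_X(A')$ with the same image $\xi_A:=F(\eta_A)$ in $\Def_X(A)$. The obstruction to lifting $\eta_A$ to $\Def_{X\subset Y}(A')$ is a class in $\Ext^2(L_{X/Y},\O_X)\otimes_k(\varepsilon)$ whose image in $T^2_X\otimes_k(\varepsilon)$ is, by functoriality of the cotangent-complex obstruction theory, the obstruction to lifting $\xi_A$ to $\Def_X(A')$; the latter vanishes since $\xi_{A'}$ exists, so by the injectivity above the former vanishes and $\eta_A$ lifts to some $\tilde\eta_{A'}\in\Def_{X\subset Y}(A')$. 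The two lifts $F(\tilde\eta_{A'})$ and $\xi_{A'}$ of $\xi_A$ differ by a unique element $v\in T^1_X\otimes_k(\varepsilon)$; lift $v$ through the surjection $H^0(X,N_{X/Y})\otimes_k(\varepsilon)\twoheadrightarrow T^1_X\otimes_k(\varepsilon)$ to $w$, and translate $\tilde\eta_{A'}$ by $w$ using the torsor structure on the set of lifts of $\eta_A$. Since the torsor action commutes with $F$ via the tangent map, the result $\eta_{A'}$ lifts $\eta_A$ and satisfies $F(\eta_{A'})=\xi_{A'}$, which is exactly formal smoothness.

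The main obstacle is bookkeeping rather than geometry: one must pin down that the connecting maps in the cotangent-complex long exact sequence really do coincide with the differential and obstruction maps induced by $F$, and that the tangent--obstruction theory of $\Def_{X\subset Y}$ used here is genuinely compatible with the absolute one through the transitivity triangle (any of the standard models --- $\Ext^\bullet(L_{X/Y},\O_X)$, or the truncation $\tau_{\ge-1}L_{X/Y}$, i.e.\ $I/I^2$ in degree $-1$ --- works, the only point being compatibility of obstruction classes). Everything else is a formal consequence of the two (co)homological vanishings read off from $H^1(X,i^*T_{Y/k})=0$.
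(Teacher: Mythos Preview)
Your proof is correct and rests on the same underlying machinery (the transitivity triangle for cotangent complexes and Illusie's identifications), but it is organized differently from the paper's argument. You work over $k$ and invoke the standard smoothness criterion for a morphism of deformation functors: from $H^1(X,i^*T_{Y/k})=0$ you extract surjectivity on tangent spaces $\Ext^1(L_{X/Y},\O_X)\twoheadrightarrow T^1_X$ and injectivity on obstruction spaces $\Ext^2(L_{X/Y},\O_X)\hookrightarrow T^2_X$, then lift the embedded datum and adjust by a tangent vector. The paper instead works directly over the small extension $A'$: it applies the transitivity triangle to $X_A\to Y_{A'}\to \Spec A'$, obtains a surjection $\Exal_{\O_{Y_{A'}}}(\O_{X_A},\O_{X_A})\twoheadrightarrow\Exal_{\O_{A'}}(\O_{X_A},\O_{X_A})$ from the vanishing of $\Ext^1(f^*L_{Y_{A'}/A'},\O_{X_A})\cong H^1(X_A,T_{Y_A/A}|_{X_A})$, and then the given abstract deformation $X_{A'}$ (viewed as an $A'$-extension of $\O_{X_A}$) lifts in one step to an $\O_{Y_{A'}}$-extension, i.e.\ to a morphism $X_{A'}\to Y_{A'}$. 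The paper's route is shorter and sidesteps the compatibility-of-obstruction-classes bookkeeping you rightly flag as the main obstacle; your route is the textbook tangent--obstruction argument and makes the role of each $\Ext$ degree more explicit.
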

\begin{proof}
We must show that for any small extension $A' \to A$ with square-zero
  kernel, the map 
  \[
  \Def_{X \subset Y}(A') \to \Def_{X}(A') \times_{\Def_X(A)} \Def_{X
    \subset Y}(A)
  \]
  is surjective.
  Consider a diagram
  \[
  \xymatrix{
  X_{A} \ar@{^{(}->}[r]  \ar[d]  & Y_{A}:=Y \times_{\Spec k} \Spec A \ar[d] \\
  X_{A'} \ar@{-->}[r] & Y_{A'}:=Y \times_{\Spec k}  \Spec A'\\
  }
  \]
  where $[X_{A'}]  \in \Def_{X}(A')$ and $[X_{A} \hookrightarrow Y_{A}] \in \Def_{X \subset Y}(A)$ each restrict to $[X_{A}] \in \Def_{X}(A)$. We must show that there exists a map $X_{A'} \rightarrow Y_{A'}$ (any such map is automatically a closed embedding).

Let us abuse notation by writing $A$ (resp. \!$A'$) for $\Spec A$ (resp. \!$\Spec A'$). The triple
$X_A \xrightarrow{f} Y_{A'} \to A'$ gives a distinguished triangle of complexes \cite[II.2.1.2]{Illusie}:
\[
\xymatrix{f^*L_{Y_{A'}/A'} \ar[r] &  L_{X_A/A'} \ar[r] &  L_{X_A/Y_{A'}}},
\]
and the long exact sequence associated to
$\hom_{X_A}(-,\O_{X_A})$ gives:
\[
\xymatrix{\Ext^1_{X_A}(L_{X_A/Y_{A'}}, \O_{X_A})\ar[r]  & \ar[r]
  \Ext^1_{X_A}(L_{X_A/A'},\O_{X_A}) &
  \Ext^1_{X_A}(f^*L_{Y_{A'}/A'},\O_{X_A})}.  
\]
We claim that  $\Ext^1_{X_A}(f^*L_{Y_{A'}/A'},\O_{X_A})=0$. Since $Y_{A'} \to A'$ is smooth, $L_{Y_{A'}/A'} \cong
\Omega_{Y_{A'}/A'}$ and
$
\Ext^1_{X_A}(f^*L_{Y_{A'}/A'},\O_{X_A}) \cong
H^1(X_A,T_{Y_{A}/A}\mid_{X_A})
$ \cite[III.3]{Illusie}. Since $T_{Y_{A}/A}$ is flat over $\Spec A$ and $H^1(X,\imath^*T_{Y/k}) = 0$, $H^1(X_A,T_{Y_{A}/A}\mid_{X_A})=0$ by \cite[Exercise III.11.8]{Hartshorne}.

By \cite[III.1.2]{Illusie}, $\Ext^1_{X_A}(L_{X_A/Y_{A'}}, \O_{X_A}) \simeq \Exal_{\O_{Y_{A'}}}(\O_{X_A},\O_{X_A})$ and $\Ext^1_{X_A}(L_{X_A/A'},\O_{X_A}) \simeq \Exal_{\O_{A'}}(\O_{X_A},\O_{X_A})$, so  we obtain a surjection:
\[
\xymatrix{\Exal_{\O_{Y_{A'}}}(\O_{X_A},\O_{X_A}) \ar[r] &
  \Exal_{\O_{A'}}(\O_{X_A},\O_{X_A}) \ar[r] &
0 }.  
\]
Thus, there is an
$\O_{Y_{A'}}$-extension of $\O_{X_A}$ by $\O_{X_A}$ mapping to $[X_{A'}] \in \Exal_{\O_{A'}}(\O_{X_A},\O_{X_A}).$ In particular, there is a map of sheaves $\O_{Y_{A'}}
\to \O_{X_{A'}}$ and consequently a morphism $X' \to Y_{A'}$
extending $X_{A} \to Y_{A'}$. 
\end{proof}

\begin{corollary}
Suppose that $X$ is a projective $k$-scheme with $h^2(X,\O_{X})=0$. If $X \hookrightarrow \P^n$ is any embedding such that $h^1(X,\O_{X}(1))=0$, then \emph{$\Def_{X \subset \P^{n}} \rightarrow \Def_{X}$} is formally smooth.
\end{corollary}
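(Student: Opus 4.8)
The plan is to apply Lemma \ref{L:DefSurjective} with $Y = \P^n$, so the only thing to verify is the vanishing hypothesis $H^1(X, i^*T_{\P^n/k}) = 0$, where $i : X \hookrightarrow \P^n$ is the given embedding. For this I would use the Euler sequence on $\P^n$,
\[
0 \to \O_{\P^n} \to \O_{\P^n}(1)^{\oplus(n+1)} \to T_{\P^n/k} \to 0.
\]
Since this is a short exact sequence of locally free sheaves, it remains exact after applying $i^*$, giving
\[
0 \to \O_{X} \to \O_{X}(1)^{\oplus(n+1)} \to i^*T_{\P^n/k} \to 0
\]
on $X$. The associated long exact sequence in cohomology contains the segment
\[
H^1(X,\O_X(1))^{\oplus(n+1)} \longrightarrow H^1(X, i^*T_{\P^n/k}) \longrightarrow H^2(X,\O_X).
\]
By hypothesis the outer two groups vanish ($h^1(X,\O_X(1)) = 0$ and $h^2(X,\O_X) = 0$), so $H^1(X, i^*T_{\P^n/k}) = 0$.

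With this vanishing in hand, Lemma \ref{L:DefSurjective} (applied to $i : X \hookrightarrow \P^n$, noting $\P^n$ is smooth over $k$ and $X$ is proper over $k$, being projective) immediately yields that $\Def_{X \subset \P^n} \to \Def_X$ is formally smooth, which is the assertion.

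There is no substantive obstacle here: the corollary is essentially just an unpacking of the general lemma in the special case $Y = \P^n$, the only input being the standard computation of the pullback of the tangent bundle of projective space via the Euler sequence. The one minor point worth stating carefully is that the Euler sequence, being a sequence of locally free sheaves, stays exact under $i^*$ (equivalently, it is locally split), so that no $\mathrm{Tor}$ terms intrude; everything else is a two-term squeeze in cohomology.
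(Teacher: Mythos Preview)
Your proof is correct and follows essentially the same approach as the paper: reduce to the vanishing $H^1(X, i^*T_{\P^n})=0$ via Lemma \ref{L:DefSurjective}, then deduce this from the pulled-back Euler sequence together with the hypotheses $h^1(X,\O_X(1))=0$ and $h^2(X,\O_X)=0$. Your added remark that the Euler sequence remains exact after $i^*$ because its terms are locally free is a nice point of care, but otherwise the arguments are identical.
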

\begin{proof}
By Lemma \ref{L:DefSurjective}, it is sufficient to show that $h^1(C,i^*T_{\P^n})=0$. We have an exact sequence
$$
0 \rightarrow \O_{X} \rightarrow \O_{X}(1)^{\oplus(n+1)} \rightarrow i^*T_{\P^{n}} \rightarrow 0,
$$
and taking cohomology gives
$$
H^1(X,  \O_{X}(1))^{\oplus(n+1)} \rightarrow H^1(X,  i^*T_{\P^{n}}) \rightarrow H^2(X, \O_{X}).
$$
Since $h^1(X,\O_{X}(1))=h^2(X,\O_{X})=0$, we conclude $h^1(X, i^*T_{\P^n})=0$ as desired.
\end{proof}

\begin{corollary}
There exists a scheme $U$, locally of finite-type over $\Spec \mathbb{Z}$, and a smooth surjective cover $U \rightarrow \U$.
\end{corollary}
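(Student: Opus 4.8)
The plan is to realize $U$ as an open subscheme of a disjoint union of Hilbert schemes of projective spaces over $\mathbb{Z}$, and to deduce smoothness of the tautological morphism $U \to \U$ from the deformation-theoretic criterion established above. For each integer $n \geq 0$ and each $p \in \mathbb{Q}[t]$ of degree one, let $H^{n}_{p}$ be the Hilbert scheme of closed subschemes of $\P^{n}_{\mathbb{Z}}$ with Hilbert polynomial $p$; this is a projective $\mathbb{Z}$-scheme carrying a universal family $\mathcal{Z}^{n}_{p} \subset \P^{n}_{H^{n}_{p}}$ that is automatically flat, proper, and finitely presented over $H^{n}_{p}$, all of whose geometric fibers have dimension $\deg p = 1$ by constancy of the Hilbert polynomial in flat families. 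Let $U^{n}_{p} \subset H^{n}_{p}$ be the open locus over which the geometric fibers $\mathcal{Z}_{x}$ satisfy $h^{1}(\mathcal{Z}_{x}, \O_{\mathcal{Z}_{x}}(1)) = 0$, which is open by semicontinuity of cohomology, and set $U := \coprod_{n,p} U^{n}_{p}$, the union being over all $n \geq 0$ and all degree-one $p$. Then $U$ is a scheme, locally of finite type over $\Spec \mathbb{Z}$, being an open subscheme of a disjoint union of projective $\mathbb{Z}$-schemes, and restriction of the universal families defines a morphism $\tau : U \to \U$ carrying an embedded family $\mathcal{Z} \subset \P^{n}_{T}$ to the abstract family $\mathcal{Z} \to T$ (a scheme, a fortiori an algebraic space).

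Because $U$ is locally of finite type over $\mathbb{Z}$ and $\U$ is limit-preserving (as noted above), $\tau$ is locally of finite presentation, so it remains to check that $\tau$ is surjective and formally smooth. For surjectivity, let $[C] \in \U$ be a geometric point, so that $C$ is a proper one-dimensional algebraic space over an algebraically closed field $k$; by Lemma \ref{L:Fppfprojective} applied over $\Spec k$, $C$ is in fact a projective $k$-scheme, hence carries an ample line bundle $L$. Replacing $L$ by a sufficiently large tensor power yields a closed embedding $C \hookrightarrow \P^{n}_{k}$ with $h^{1}(C, \O_{C}(1)) = 0$; the corresponding $k$-point of the relevant $H^{n}_{p}$ then lies in $U^{n}_{p}$ and is carried by $\tau$ to $[C]$. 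Thus $\tau$ is surjective.

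For formal smoothness, observe that $\tau$ is representable (the diagonal of $\U$ is representable by Section B.1) and locally of finite presentation, so by the infinitesimal lifting criterion it suffices to treat a small extension $A' \twoheadrightarrow A$ of Artinian local rings with algebraically closed residue field $k$, equipped with a commutative square consisting of an $A$-point $[\mathcal{Z}_{A} \subset \P^{n}_{A}]$ of $U$ and an $A'$-point $[C_{A'}]$ of $\U$ whose restriction over $A$ is the abstract family $[\mathcal{Z}_{A}]$. Let $X$ be the closed fiber, a proper one-dimensional $k$-scheme embedded in $\P^{n}_{k}$ via the given point of $U$; then $h^{1}(X, \O_{X}(1)) = 0$ by construction of $U$, and $h^{2}(X, \O_{X}) = 0$ automatically since $\dim X = 1$. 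By the corollary immediately preceding, $\Def_{X \subset \P^{n}} \to \Def_{X}$ is formally smooth, so the abstract deformation $C_{A'}$ admits an embedded deformation $\mathcal{Z}_{A'} \subset \P^{n}_{A'}$ restricting to $\mathcal{Z}_{A} \subset \P^{n}_{A}$ over $A$ and having $C_{A'}$ as its underlying abstract family; as the $h^{1}$-vanishing condition is open and holds at the closed point, $\mathcal{Z}_{A'}$ defines an $A'$-point of $U$ that lifts the given $A$-point and maps to $[C_{A'}]$ in $\U$. Hence $\tau$ is formally smooth, and therefore smooth; so $\tau : U \to \U$ is the desired smooth surjective cover.

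The step I expect to be the main obstacle is the last paragraph: one must carefully match the abstract deformation supplied by the map to $\U$ with the embedded deformation produced by the preceding corollary, verifying that the deformation functors there record exactly the data of the lifting square, and that the total space $C_{A'}$ is automatically a scheme so that the scheme-theoretic hypotheses of that corollary apply. By comparison, openness of $U^{n}_{p}$, local finite presentation of $\tau$, and surjectivity are routine.
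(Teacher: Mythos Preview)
Your proof is correct and follows essentially the same approach as the paper: build the atlas from open loci in Hilbert schemes of projective spaces and deduce smoothness from the formal smoothness of $\Def_{X \subset \P^n} \to \Def_X$. The only cosmetic difference is that the paper cuts out $U$ by the condition $h^1(C, i^*T_{\P^n}) = 0$ and invokes Lemma~\ref{L:DefSurjective} directly, whereas you use the condition $h^1(C, \O_C(1)) = 0$ and invoke the preceding corollary; these are equivalent here since $h^2(C,\O_C)=0$ for curves, and your choice arguably lines up more cleanly with that corollary. Your flagged concern---that the abstract deformation $C_{A'}$ supplied by $\U$ is a priori only an algebraic space---is legitimate but harmless: over an Artinian local ring the underlying topological space is that of the closed fiber, which is a scheme embedded in $\P^n_k$, and a nilpotent thickening of a scheme by an algebraic space is again a scheme (affine opens lift along square-zero extensions). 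The paper does not spell this out either.
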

\begin{proof}
Given a geometric point $x:=[C] \in \U$, consider any embedding $C \hookrightarrow \P^{n}$ such that $h^1(C, i^*T_{\P^n})=0$. By cohomology and base-change \cite[Theorem 12.11]{Hartshorne}, there exists a Zariski-open affine neighborhood
$$
[C] \in U_{x} \subset Hilb(\P^{n})
$$
such that the restriction of the universal embedding $\C \hookrightarrow \P^{n} \times Hilb(\P^{n})$ to $U_{x}$ satisfies $h^1(C_y, i^*T_{\P^n_y})=0$ for every $y \in U_{[C]}$. By Lemma \ref{L:DefSurjective}, the induced map $U_x \rightarrow \U$ is formally smooth. Also, since the diagonal $\U \rightarrow \U \times \U$ is representable and of finite presentation (Corollary \ref{C:RepDiagonal}), the map $U_{x} \rightarrow \U$ is representable and finitely-presented. Since $\U$ is limit-preserving, it suffices to check the smoothness of $U_{x} \rightarrow \U$ on noetherian test schemes, and we conclude that $U_{x} \rightarrow \U$ is smooth by \cite[Theorem 17.14.2]{EGAIV}. 

Now let $Hilb_{g,d,n}$ be the Hilbert scheme (over $\Spec \mathbb{Z}$) of curves of genus $g$ and degree $d$ in $\P^{n}$, and let $U \subset \coprod_{g,d,n}H_{g,d,n}$ be the open subscheme parametrizing curves $[C \subset \P^{n}]$ with $h^1(C, i^*T_{\P^n})=0$. Then $U$ is locally of finite type over $\Spec \mathbb{Z}$, and the arguments of the preceding paragraph show that
$$
U \rightarrow \U
$$
is smooth and surjective.
\end{proof}

\subsubsection*{B.3. Proofs of corollaries}
\begin{proof}[Proof of Corollary \ref{C:Stackgn}]
If $\C \rightarrow S$ is an object of $\U$, the set
$$
\{s \in S \,|\, C_{\overline{s}}\mbox{ is reduced, connected, of arithmetic genus $g$}\}
$$
is open in $S$ by \cite[12.2.1-12.2.3]{EGAIV}. It follows that $\U_{g}$, the stack whose objects are flat proper finitely-presented morphisms of relative dimension one with geometrically connected reduced fibers of arithmetic genus $g$, is an open substack of $\U$.

Now observe that the morphism $\U_{g,1} \rightarrow \U_{g}$ obtained by forgetting a section is representable, finitely-presented, and proper. Indeed, if $S \rightarrow \U_{g}$ is any morphism from an affine scheme, corresponding to a family $\C \rightarrow S$, then the fiber product $S \times_{\U_{g}} \U_{g,1}$ is naturally isomorphic to $\Hom_{S}(S,\C)$, which is represented by $\C$ itself. It follows that $\U_{g,1}$ is an algebraic stack, locally of finite type over $\Spec \mathbb{Z}$. Since
  \[
  \U_{g,n} \simeq \underbrace{\U_{g,1} \times_{\U_{g}} \cdots
    \times_{\U_{g}} \U_{g,1}}_n,
  \] 
  we conclude that $\U_{g,n}$ is an algebraic stack, locally of finite-type over $\Spec \mathbb{Z}$.
\end{proof}

For the second corollary, we need the following boundedness lemma
\begin{lemma}(with Smyth, Vakil, and van der Wyck)\label{L:Boundedness}
There exists integers $N_{g,e}$ and $D_{g,e}$ depending only on $g$ and $e$ such that any reduced curve of arithmetic genus $g$ with no more than $e$ irreducible components admits a degree $d$ embedding into $\P^{n}$ with $d \leq D_{g,e}$ and $n \leq N_{g,e}.$
\end{lemma}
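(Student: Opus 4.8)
The plan is to equip every such $C$ with a very ample line bundle whose degree and whose number of global sections are bounded purely in terms of $g$ and $e$; the complete linear system then embeds $C\hookrightarrow\P^{n}$ with both $n$ and the degree $d=\deg(\O_C(1))$ under control. \emph{Step 1 (bounding the discrete invariants).} Let $\nu\colon\tilde C\to C$ be the normalization, so $\tilde C=\tilde C_1\sqcup\cdots\sqcup\tilde C_r$ is a disjoint union of smooth curves $\tilde C_i$ of genus $g_i$, with $r$ the number of irreducible components of $C$; in particular $r\le e$. Taking Euler characteristics in $0\to\O_C\to\nu_*\O_{\tilde C}\to\mathcal F\to0$, where $\mathcal F$ is supported on $\mathrm{Sing}(C)$ and $\delta:=\dim_k\mathcal F$ is the total $\delta$-invariant, gives $1-g=(r-\sum_i g_i)-\delta$, hence $\sum_i g_i+\delta=g+r-1\le g+e-1$. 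Thus each $g_i\le g+e-1$, $\delta\le g+e-1$, and $C$ has at most $g+e-1$ singular points. Moreover, writing $\mathfrak c=\nu_*\tilde{\mathfrak c}$ for the conductor ideal, Nakayama applied to the length-$\delta$ module $\nu_*\O_{\tilde C}/\O_C$ yields $\mathfrak m_{\tilde C,x}^{\,\delta}\subseteq\tilde{\mathfrak c}_x$ at every point above $\mathrm{Sing}(C)$, so $\dim_k(\O_{\tilde C}/\tilde{\mathfrak c})$ is bounded by an explicit function $B(g,e)$.

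\emph{Step 2 (the polarization and its cohomology).} Pick on each component a point $p_i\in C$ that is a smooth point of $C$ lying on no other component (such points exist because $C$ is reduced over the algebraically closed field $k$), and for an integer $D$ set $L_D:=\O_C\!\big(D(p_1+\cdots+p_r)\big)$, a line bundle with $\deg(L_D|_{C_i})=D$ for all $i$; thus $\deg L_D=rD$ and, by Riemann--Roch on the Cohen--Macaulay curve $C$, $\chi(L_D)=rD+1-g$. Tensoring the sequence of Step 1 with $L_D$ and using the projection formula, $H^1(C,\nu_*\nu^{*}L_D)=\bigoplus_i H^1(\tilde C_i,\nu^{*}L_D)$ vanishes once $D>2g_i-2$ for all $i$, while $H^1(C,\mathcal F)=0$; hence $H^1(C,L_D)=0$ for $D\ge 2(g+e-1)-1$. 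For very ampleness I would invoke the standard criterion that (over an algebraically closed field) $L_D$ is very ample iff $H^1(C,\mathcal I_Z\otimes L_D)=0$ for every zero-dimensional $Z\subset C$ of length $\le 2$. When $Z$ avoids $\mathrm{Sing}(C)$, $\mathcal I_Z$ is invertible and the computation just made applies after twisting down by a divisor of degree $\le 2$.

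\emph{Step 3 (the singular case and conclusion).} When $Z$ meets $\mathrm{Sing}(C)$ one passes to the conductor: the inclusion $\mathfrak c\,\mathcal I_Z\otimes L_D\hookrightarrow\mathcal I_Z\otimes L_D$ has finite-length cokernel (hence no $H^1$ obstruction), and $\mathfrak c\,\mathcal I_Z\otimes L_D\cong\nu_*\!\big((\tilde{\mathfrak c}\cdot\mathcal I_Z\O_{\tilde C})\otimes\nu^{*}L_D\big)$, which on each $\tilde C_i$ is $\nu^{*}L_D|_{\tilde C_i}(-E_i)$ for an effective divisor $E_i$ of degree $\le B(g,e)$ (the colengths of $\tilde{\mathfrak c}$ and of $\mathcal I_Z\O_{\tilde C}$ both being bounded through $\delta\le g+e-1$); its $H^1$ vanishes once $D>2g_i-2+\deg E_i$. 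Hence for $D\ge d_0(g,e):=2g+2e+B(g,e)$, $L_D$ is very ample and $H^1(C,L_D)=0$. Taking $D=d_0(g,e)$, the map $|L_D|$ embeds $C\hookrightarrow\P^{n}$ with $n=h^0(C,L_D)-1=\chi(L_D)-1=rD-g\le e\,d_0(g,e)-g$, and the image has degree $d=\deg L_D=rD\le e\,d_0(g,e)$; so $N_{g,e}:=e\,d_0(g,e)-g$ and $D_{g,e}:=e\,d_0(g,e)$ work.

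The one genuinely delicate point is the singular case in Step 3: one must check that the failure of $\mathcal I_Z$ to be invertible at a singular point costs only a bounded amount of positivity, which is exactly where the conductor bound $\dim_k(\O_{\tilde C}/\tilde{\mathfrak c})\le B(g,e)$ from Step 1 enters. Everything else reduces, via the normalization sequence, to Riemann--Roch on the smooth curves $\tilde C_i$, whose genera were bounded in Step 1. If preferred, Step 3 could be replaced by a citation of any uniform very-ampleness statement for line bundles on reduced curves of bounded arithmetic genus, but the conductor argument above is short and needs no external input.
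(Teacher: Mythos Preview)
Your overall strategy is sound and closely parallels the paper's for the degree bound, but there is a genuine logical error in Step~2. From the exact sequence $0\to L_D\to\nu_*\nu^*L_D\to\mathcal F\otimes L_D\to 0$, the long exact sequence reads
\[
H^0(\nu_*\nu^*L_D)\to H^0(\mathcal F\otimes L_D)\to H^1(L_D)\to H^1(\nu_*\nu^*L_D)\to 0,
\]
so vanishing of $H^1(\nu_*\nu^*L_D)$ and of $H^1(\mathcal F)$ does \emph{not} force $H^1(L_D)=0$; rather, $H^1(L_D)$ is the cokernel of the map on $H^0$'s. That cokernel does vanish once $D$ is large enough for sections of $\nu^*L_D$ on $\tilde C$ to surject onto the (bounded-length) jet spaces supporting $\mathcal F$, but this needs exactly the conductor-type bound you invoke only later in Step~3. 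So the argument as written is circular at this point: you quote the Step~2 vanishing to handle the smooth case in Step~3, but the Step~2 vanishing itself requires the Step~3 mechanism. The cleanest fix is to run your conductor argument once, for $\mathcal I_Z$ an arbitrary ideal of bounded colength (including $\mathcal I_Z=\O_C$), rather than splitting into smooth and singular cases.

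The paper sidesteps this by using the inclusion going the other way: it embeds $\pi_*\O_{\tilde C}(-2\delta(p)\sum_j p_j)\hookrightarrow\mathfrak m_p^2$, so that $H^1$ of the left-hand side (computed on $\tilde C$) \emph{surjects} onto $H^1(\mathfrak m_p^2\otimes L^m)$ in the long exact sequence. For the bound on $n$, the paper takes a quite different route: rather than computing $h^0(L_D)$ by Riemann--Roch as you do, it embeds into some large $\P^N$ and then projects down, bounding the dimension of the tangent variety by separately bounding the embedding dimension of each singularity. Your Riemann--Roch approach is more direct and gives the bound in one stroke, once $H^1(L_D)=0$ is properly established.
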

\begin{proof}
First, we show there exists an integer $D_{g,e}$ such that any reduced curve of arithmetic genus $g$ with no more than $e$ irreducible components admits a degree $d \leq D_{g,e}$ embedding into some projective space. Given such a curve $C$ with normalization $\pi: \tilde{C} \rightarrow C$, let $Z \subset C$ be an effective Cartier divisor whose support meets the smooth locus of every irreducible component of $C$. Since $C$ has no more than $e$ irreducible components, we may assume that $\deg Z \leq e.$ Let $\L:=\O(Z)$. It suffices to exhibit an integer $m:=m(g,e)$, depending only on $g$ and $e$, such that $\L^{m}$ is very ample on $C$. Indeed, we may take $D_{g,e}=me$. 

To show that $\L^{m}$ separates points and tangent vectors, it is sufficient to show that, for any $p \in C$,
$$
H^1(C,\L^{m} \otimes m_{p})=H^1(C,\L^{m} \otimes m_{p}^2)=0.
$$ 
Clearly, the latter vanishing implies the former. Given $p \in C$, let $\pi^{-1}(p)=p_1+\ldots+p_r$ and let $\delta(p)$ denote the $\delta$-invariant of $p$. We have an exact sequence
$$
\xymatrix{0 \ar[r] &  \pi_*\O_{\tilde{C}}(-2\delta(p)(p_1+\ldots+p_r))
  \ar[r] &  m_{p}^2 \ar[r] &  \mathscr{E}\ar[r] & 0},
$$
where $\mathscr{E}$ is a coherent sheaf supported at $p$. Twisting by $\L^{m}$ and taking cohomology, we obtain
$$
\xymatrix{H^1(C, \L^{m} \otimes
  \pi_*\O_{\tilde{C}}(-2\delta(p)(p_1+\ldots+p_r))) \ar[r] &  H^1(C,
  \L^{m} \otimes m_{p}^2 ) \ar[r] &  0}.  
$$
By the projection formula, we have 
$$H^1(C, \L^{m} \otimes  \pi_*\O_{\tilde{C}}(-2\delta(p)(p_1+\ldots+p_r)))=H^1(\tilde{C}, (\pi^*\L)^m(-2\delta(p)(p_1+\ldots+p_r))),$$
which vanishes as soon as $m>2g-2+2\delta(p)r$. Since $\delta(p) \leq g+e-1$ and $r \leq \delta(p)+1$, we may take $m(g,e):=2g-2+2(g+e)(g+e-1)$.

Next, we must bound the ambient dimension of the projective space. Given a degree $d$ embedding $C \hookrightarrow \P^{n}$, let $\Sec(C)$ and $\Tan(C)$ denote the secant and tangent varieties of $C$ respectively. As long as $n>\max\{\Sec(C), \Tan(C)\}$, we may obtain a lower-dimensional embedding by projection. The dimension of $\Sec(C)$ is at most 3, and the dimension of $\Tan(C)$ is bounded by the maximum embedding dimension of any singular point on $C$. Thus, it suffices to exhibit an integer $N:=N(g,e)$ such that any singularity of $p \in C$ has embedding dimension at most $N$.

Suppose first that $p \in C$ is a unibranch singularity, so $\O_{C,p}$ is a finitely-generated subalgebra of the power series ring $k[[t]]$. Choose a set of generators for $\O_{C,p}$, say $f_1, \ldots, f_m$, and let $N = \min_{i} \{ \deg f_i \}$. Clearly we may choose our generators such that the residues of $\{ \deg f_i \}$ modulo $N$ are all distinct. In particular, we have $m \leq N.$ Since $N \leq \delta(p) \leq g+e-1$, we have the bound $N(g,e)=g+e-1$ for the unibranch case.

If $p \in C$ has $r$ branches, then we have $\O_{C,p} \subset k[[t_1]]\oplus \ldots \oplus k[[t_r]]$. The restriction of $\O_{C,p}$ to each branch is generated by at most $g+e-1$ elements, and there are at most $\delta(p)+1 \leq g+e$ branches, so altogether $\O_{C,p}$ must be generated by at most $(g+e)(g+e-1)$ elements. Thus, we may take $N(g,e)=(g+e)(g+e-1)$.

\end{proof}

\begin{proof}[Proof of Corollary \ref{C:Stackgne}]
The stack $\U_{g,n,d}$ is an open substack of $\U_{g,n}$ by \cite{EGAIV}[12.2.1 (xi)]. It only remains to see that we can cover $\U_{g,n,d}$ by a scheme of finite-type over $\Spec \mathbb{Z}$. Since the Hilbert schemes of curves in $\P^{n}$ of arithmetic genus $g$ and degree $d$ is of finite-type, this follows immediately from Lemma \ref{L:Boundedness}.
\end{proof}


\begin{thebibliography}{6}
\bibitem{AC} E. Arbarello and M. Cornalba, \emph{Calculating cohomology groups of moduli spaces of curves via algebraic geometry}, Inst. Hautes Etudes Sci. Publ, Math. No. 88, 97-129 (1999).
\bibitem{Artin} M. Artin, \emph{Algebraization of Formal Moduli II: Existence of Modifications} Ann. of Math. (2) 91 1970 (88-135).
Duke Math. J. 85 (1996), no. 1, 1--60. 
\bibitem{ConradDeJong} B. Conrad and A.J. deJong, \emph{Approximation of versal deformations}, Journal of Algebra, 255 (2002), no. 2, 489-515.
\bibitem{DeJong2} A.J. de Jong, \emph{Families of curves and alterations}, Ann. Inst. Fourier (Grenoble) 47 (1997), no.2, 599-621.
\bibitem{DeJong1} A.J. de Jong, \emph{Smoothness, semi-stability, and alterations}, Publications Mathematiques I.H.E.S., 83 (1996), pp. 51-93.
\bibitem{SGA4.5} P. Deligne. \emph{Cohomologie \'{e}tale}. S\'{e}minaire de
  G\'{e}om\'{e}trie Alg\'{e}brique du Bois-Marie SGA 4${1\over 2}$. Avec la
  collaboration de J. F. Boutot, A. Grothendieck, L. Illusie et
  J. L. Verdier. Lecture Notes in Mathematics,
  Vol. 569. Springer-Verlag, Berlin-New York, 1977. iv+312pp. 
\bibitem{DeligneMumford} P. Deligne and D. Mumford, \emph{The
    irreducibility of the space of curves of given genus}, Inst. Hates
  \'{E}tudes Sci. Publ. Math. (1969) No. 36, 75-109.
\bibitem{DHS} A.J. DeJong, X. He, J. Starr,  \emph{Families of rationally simply connected varieties over surfaces and torsors for semisimple groups}, arXiv:0809.5224.
\bibitem{EGAIII} J. Dieudonn\'{e} and A. Grothendieck, \emph{\'{E}l\'{e}ments de g\'{e}om\'{e}trie alg\'{e}brique III}, Pub. Math. Inst. Hautes \'{E}tudes Sci., 1961-.
\bibitem{EGAIV} J. Dieudonn\'{e} and A. Grothendieck, \emph{\'{E}l\'{e}ments de g\'{e}om\'{e}trie alg\'{e}brique IV}, Pub. Math. Inst. Hautes \'{E}tudes Sci., 1965-.
\bibitem{EHKV} D. Edidin, B. Hassett, A. Kresch, and A. Vistoli, \emph{Brauer groups and quotient tacks}, American Journal of Mathematics, 123 (2001), no.4, 761-777.
\bibitem{FGA_exp} B. Fantechi, L. G\"{o}ttsche, L. Illusie, S. L. Kleiman,
  N. Nitsure, A. Vistoli, \emph{Fundamental algebraic
    geometry. Grothendieck's FGA explained}, Mathematical Surveys and
  Monographs, 123. American Mathematical Society, Providence, RI,
  2005. x+339 pp. ISBN: 0-8218-3541-6 
\bibitem{Hassett1}
B. Hassett, \emph{Moduli spaces of weighted pointed curves}, Advances in Mathematics 173 (2003), no. 2, 316-352.
\bibitem{Hassett3} B. Hassett, D. Hyeon, \emph{Log canonical models of the moduli space of curves: the first contraction}, submitted.
\bibitem{HarMor} J. Harris, I. Morrison, \emph{Moduli of curves}, Springer-Verlag, New York, 1998.
\bibitem{Hartshorne} R. Hartshorne, \emph{Algebraic Geometry}, Springer-Verlag, New York, 1977.
\bibitem{Illusie} L. Illusie \emph{Complexe cotangent et d\'{e}formations} Lecture Notes in Mathematics, 239, Springer-Verlag, Berlin-New York (1971).
\bibitem{KeelMori} S. Keel and S. Mori, \emph{Quotients by groupoids}, Ann. of Math. (2) 145 (1997), no. 1, 193-213.
\bibitem{Kollar} J. Koll\'{a}r, \emph{Rational curves on algebraic varieties}, Springer-Verlag, Berlin (1999).
\bibitem{Knutson} D. Knutson, \emph{Algebraic Spaces}, Lecture Notes in Mathematics, Springer (1971).
\bibitem{Lipman} J. Lipman, \emph{Rational Singularities}, Publ. Math. I.H.E.S., no. 36 (1969).
\bibitem{LMB} G. Laumon and L. Moret-Bailly, \emph{Champs alg\'{e}braique}, Ergebnisse der Mathematik und ihrer Grenzgebiete, 39. Springer-Verlag, Berlin (2000).
\bibitem{RG} M. Raynaud and L. Gruson, Crit\'{e}res de platitude et de projectivit\'{e}, Techniques de platification d'un module, Inventiones mathematicae 13, 1-89 (1971).
\bibitem{Schubert}
D. Schubert, \emph{A new compactification of the moduli space of curves}, Compositio Math. 78 (1991), no. 3, 297-313.
\bibitem{Sernesi} Edoardo Sernesi, \emph{Deformations of algebraic schemes},  Springer, 2006.
M. Simpson, \emph{On Log canonical models of the moduli space of stable pointed genus zero curves}, Ph.D. thesis, Rice University (2008).
\bibitem{Smyth1} D. Smyth, \emph{Modular compactifications of $\M_{1,n}$ I: Construction of $\SM_{1,\A}(m)$}, submitted.
\bibitem{Stevens} J. Stevens, \emph{On the classification of reducible curve singularities}, in Algebraic Geometry and Singularities, Progr. Math. 134, Birkhauser (1996), 383-407.
\end{thebibliography}
\end{document}